\documentclass[11pt]{article}
\usepackage[a4paper,margin=2.5cm]{geometry}
\usepackage[comma, round]{natbib}
\usepackage{float}
\usepackage{amssymb}
\usepackage{amsthm}
\usepackage{amsmath}
\usepackage{titletoc}
\usepackage{mathrsfs}
\usepackage[colorlinks, allcolors=blue]{hyperref}
\usepackage{amsfonts}
\usepackage{graphicx}
\usepackage[linesnumbered, ruled, vlined]{algorithm2e}
\usepackage{esint}
\usepackage{bbm}
\usepackage{mathtools}
\usepackage[shortlabels]{enumitem}
\usepackage{authblk}

\numberwithin{equation}{section}
\numberwithin{figure}{section}

\theoremstyle{plain}
\newtheorem{thm}{Theorem}[section]

\newtheorem{lem}[thm]{Lemma}
\newtheorem{prop}[thm]{Proposition}

\theoremstyle{definition}
\newtheorem{defn}{Definition}[section]
\newtheorem{asmp}{Assumption}[section]
\newtheorem{rmk}[defn]{Remark}
\newtheorem{exam}{Example}[section]
\newtheorem{prob}[exam]{Problem}

\newcommand{\1}{\mathbbm{1}}

\newcommand{\md}{\mathop{}\mathopen\mathrm{d}}
\newcommand{\E}{E}

\renewcommand{\P}{P}

\newcommand{\Id}{\operatorname{Id}}
\newcommand{\ot}{\operatorname{OT}}

\newcommand{\proj}{\operatorname{proj}}
\newcommand{\supp}{\operatorname{supp}}

\DeclareMathOperator*{\argmin}{arg\,min}
\DeclareMathOperator*{\arginf}{arg\,inf}

\def\lb{\mathopen{}\mathclose\bgroup\left}
\def\rb{\aftergroup\egroup\right}

\title{Schrödinger bridge with transport relaxation}
\author{Yifan Jiang\thanks{Email:  {\tt yifan.jiang@imperial.ac.uk}}}
\affil{Department of Mathematics, Imperial College London}
\author{Renyuan Xu\thanks{Email: {\tt renyuanxu@stanford.edu}}}
\affil{Department of Management Science and Engineering, Stanford University}
\author{Luhao Zhang\thanks{Email: {\tt luhao.zhang@jhu.edu}}}
\affil{Department of Applied Mathematics and Statistics, Johns Hopkins University }
\date{}

\begin{document}
\maketitle

\begin{abstract}
	Motivated by modern machine learning applications where we only have access to empirical measures constructed from finite  samples, we relax the marginal constraints of the classical Schrödinger bridge problem by penalizing the transport cost between the bridge’s marginals and the prescribed marginals.
	We derive a duality formula for this transport-relaxed bridge and demonstrate that it reduces to a finite-dimensional concave optimization problem when the prescribed marginals are discrete and the reference distribution is absolutely continuous.
	We establish the existence and uniqueness of solutions for both the primal and dual problems.
	Moreover, as the penalty blows up, we characterize the limiting bridge as the solution to a discrete Schrödinger bridge problem and identify a leading-order logarithmic divergence.
	Finally, we propose gradient ascent and Sinkhorn-type algorithms to numerically solve the transport-relaxed Schrödinger bridge, establishing a linear convergence rate for both algorithms.

	\medskip
	\noindent{\em Keywords}: Schrödinger bridge, entropic optimal transport, semi-discrete optimal transport, asymptotic analysis.

	\medskip
	\noindent{\em MSC 2020 subject classification}: 49Q22, 28D20, 94A17.
\end{abstract}

\section{Introduction}
The Schrödinger bridge problem was originally introduced in \citet{schrodinger1931uber,schrodinger1932sur} as follows: given a Brownian diffusion  with law \(\gamma\), conditioned on a prescribed initial distribution \(\mu\in \mathscr{P}(\mathbb{R}^{d})\) and  a prescribed terminal distribution \(\nu\in \mathscr{P}(\mathbb{R}^{d})\), what is the most likely evolution of the diffusion connecting these marginals?
This question is a prototype of the large deviation principle.
Let \(\hat{\pi}_{n}\) be the empirical distribution of the observed  trajectories of {\color{blue}$n$ independent} particles, and
formally, Sanov's theorem states that for a regular collection of evolutions \(\mathcal{O}\subseteq \Pi(\mu,\nu)\),
\begin{equation*}
	\P(\hat{\pi}_{n}\in \mathcal{O})\simeq\sup_{\pi \in \mathcal{O}}\exp(-n H(\pi|\gamma)),
\end{equation*}
where the relative entropy \(H(\cdot|\gamma)\) acts as a rate function.
Consequently, the most likely evolution \(\pi^{*}\in \Pi(\mu,\nu)\), a coupling connecting \(\mu\)  and \(\nu\), is given by the optimization problem
\begin{equation}
	\label{eqn-sb}
	\pi^{*}=\arginf_{\pi\in \Pi(\mu,\nu)} H(\pi|\gamma).
\end{equation}
The Schrödinger bridge has since then found fruitful applications extending well beyond statistical physics.
In particular, its static formulation is closely connected to entropic optimal transport, which underpins the modern computational optimal transport methods \citep{nutz2022entropic,rigollet2018entropic,carlier2017convergence}.
More recently, the dynamic Schrödinger bridge has been recast as a stochastic control problem \citep{dai1991stochastic,chen2016relation}, attracting significant interests in generative AI.
In this framework, one typically begins with a tractable source distribution \(\mu\) and aims to learn an unknown target distribution \(\nu\) corresponding to an observed data set \citep{de2021diffusion,vargas2021solving,wang2021deep}.
The goal is to generate high-quality synthetic samples that closely resemble the target distribution by numerically approximating the solution of the Schrödinger bridge problem \eqref{eqn-sb}.

However,  significant gaps remain between the theoretical framework and practical implementations in these aforementioned works.
A necessary  and sufficient condition for the existence of the Schrödinger bridge is that
\begin{equation}
	\label{eqn-cond}
	\{\pi\in \Pi(\mu,\nu): H(\pi|\gamma)<\infty\} \text{ is not empty.}
\end{equation}
In many data generation tasks, this condition is violated: one typically has access only to a finite number of samples, resulting in discrete empirical measures \(\mu\) and \(\nu\).
Motivated by this practical limitation, we focus on a semi-discrete setting where \(\mu=\sum_{i}^{n}a_{i}\delta_{x_{i}}\) and \(\nu=\sum_{j=1}^{m}b_{j}\delta_{y_{j}}\) are discrete while the reference measure \(\gamma\in \mathscr{P}(\mathbb{R}^{d}\times \mathbb{R}^{d})\) is absolutely continuous with a density \(\rho\).

A natural and theoretically interesting question is  therefore whether one can construct an entropy-regularized coupling whose marginals are not enforced exactly, but are instead allowed to deviate from the prescribed measures under a suitable relaxation.
To address this ill-posedness and accommodate the semi-discrete setting regarding empirical data, we propose a \emph{transport-relaxed} Schrödinger bridge formulation:
\begin{equation}
	\label{eqn-tsb}
	\inf_{\pi\in \mathscr{P}(\mathbb{R}^{d}\times \mathbb{R}^{d})}I(\pi):=\inf_{\pi\in \mathscr{P}(\mathbb{R}^{d}\times \mathbb{R}^{d})}  \{\ot(\mu,\pi_{x})+\ot(\nu,\pi_{y}) +H(\pi | \gamma)\},
\end{equation}
where \(\ot\) denotes the transport cost induced by the squared norm \(\|\cdot-\cdot\|^{2}\) and \(\pi_{x}\), \(\pi_{y}\) are the marginals of \(\pi\).
Comparing to the classical Schrödinger bridge problem, we do not impose any constraints on the marginals of the coupling \(\pi\).
Instead, we interpret it as a \emph{free  boundary} problem, where the undetermined marginals provide a systematic approach to mollify the empirical distribution of the dataset. In particular, by appropriately choosing the marginal OT penalties, the marginals of the optimal solution remain close to the prescribed measures
\(\mu\) and \(\nu\).

\subsection{Main results}
Our first result establishes a duality formulation for \eqref{eqn-tsb}.
For any \(\alpha\in \mathbb{R}^{n}\) and \(\beta\in \mathbb{R}^{m}\), we define \(f(x,\alpha):=\sup_{1\leq i\leq n}\{\langle x, x_{i} \rangle+\alpha_{i}\}\) and \(g(y,\beta):=\sup_{1\leq j\leq m}\{\langle y, y_{j} \rangle+\beta_{j}\}\).
In Theorem \ref{thm-semi}, we prove a strong duality relation:
\begin{equation}
	\inf_{\pi\in \mathscr{P}(\mathbb{R}^{d}\times \mathbb{R}^{d})}I(\pi)=\sup_{(\alpha,\beta)\in \mathbb{R}^{n+m}} U(\alpha,\beta),
\end{equation}
where
\begin{equation*}
	U(\alpha,\beta)=\sum_{i=1}^{n}a_{i}(\alpha_{i}+\frac{1}{2}\|x_{i}\|^{2})+\sum_{j=1}^{m}b_{j}(\beta_{j}+\frac{1}{2}\|y_{j}\|^{2})-\int\exp(f(x,\alpha)+g(y,\beta)-\frac{1}{2}\|x\|^{2}-\frac{1}{2}\|y\|^{2})\md \gamma+1.
\end{equation*}
In particular, the dual problem reduces to a {\it finite-dimensional} optimization problem.
Furthermore, we establish the existence and uniqueness of solutions for both the primal and dual problems.
The uniqueness of the dual problem is understood modulo the equivalence relation \((\alpha,\beta)\sim_{\oplus}(\alpha',\beta')\), which holds if \(\alpha_{i}+\beta_{j}=\alpha_{i}'+\beta_{j}'\) for any \(1\leq i\leq n\) and \(1\leq j\leq m\).
We refer readers to Theorem \ref{thm-dual} for a general duality formula for \eqref{eqn-tsb}, allowing \(\mu\), \(\nu\), and \(\gamma\) on general Polish spaces.

We then introduce a parameterized transport-relaxed Schrödinger bridge:
\begin{equation}
	\label{eqn-tsb2}
	\inf_{\pi\in \mathscr{P}(\mathbb{R}^{d}\times \mathbb{R}^{d})}I^{\varepsilon}(\pi):=\inf_{\pi\in \mathscr{P}(\mathbb{R}^{d}\times \mathbb{R}^{d})}  \{\varepsilon^{-1}\ot(\mu,\pi_{x})+\varepsilon^{-1}\ot(\nu,\pi_{y}) +H(\pi | \gamma)\}.
\end{equation}
Let \(\pi^{*,\varepsilon}\) denote the unique optimizer of this problem.
As \(\varepsilon\) approaches 0, the transport relaxation becomes tighter and tighter, and it is expected to enforce a hard constraint at the limit.
We analyze the limiting behavior of \(\pi^{*,\varepsilon}\).
Under condition \eqref{eqn-cond}, we show in Proposition \ref{prop-blowup} that \(\pi^{*,\varepsilon}\) converges weakly to \(\pi^{*}\) the solution to the classical Schrödinger bridge.
Under a semi-discrete setting, however, condition \eqref{eqn-cond} no longer holds, leading to the blow-up
\begin{equation*}
	\lim_{\varepsilon\to 0}I^{\varepsilon}(\pi^{\varepsilon,*})=+\infty.
\end{equation*}
At first glance, it is not clear whether  hard constraints are recovered, i.e., whether
\[
	\lim_{\varepsilon\to 0}\ot(\mu,\pi_{x}^{*,\varepsilon})=\lim_{\varepsilon\to 0}\ot(\nu,\pi_{y}^{*,\varepsilon})=0.
\]
In Theorem \ref{thm-blowup}, we give an affirmative answer and establish the weak convergence of \(\pi^{*,\varepsilon}\) to a  limiting coupling \(\pi^{*,0}\in\Pi(\mu,\nu)\).
Moreover, \(\pi^{*,0}\) is the unique solution to a discrete Schrödinger bridge problem:
\begin{equation*}
	\pi^{*,0}=\arginf_{\pi\in\Pi(\mu,\nu)}H(\pi|\sigma),
\end{equation*}
where \(\sigma\) is a discrete positive measure given by  \(\sigma=\sum_{i=1}^{n}\sum_{j=1}^{m}(2 \pi)^{d}\rho(x_{i},y_{j})\delta_{(x_{i},y_{j})}\).
We also characterize the asymptotic behavior of \(I^{\varepsilon}(\pi^{*,\varepsilon})\) by
\begin{equation}
	I^{\varepsilon}(\pi^{*,\varepsilon}) = -d\ln \varepsilon + H(\pi^{*,0}|\sigma)+ o(1) \text{ as } \varepsilon\to 0.
\end{equation}
We emphasize that the leading-order  blow-up term  \(-d\ln \varepsilon\) is \emph{intrinsic}.
It depends only on the dimension of the underlying space and is independent of the choice of \(\mu\), \(\nu\), and \(\gamma\).

Built upon the dual formulation, we propose two numerical algorithms.
The first is a gradient ascent method on the dual objective where we update dual potentials as
\begin{equation*}
	(\alpha^{t+1},\beta^{t+1})^{\intercal}=(\alpha^{t},\beta^{t})^{\intercal} + \eta \nabla U(\alpha^{t},\beta^{t}).
\end{equation*}
Here, \(\eta>0\) is a fixed step size and the gradient term \(\nabla U (\alpha,\beta)\)  is explicitly given by
\begin{equation*}
	\nabla U(\alpha,\beta)=
	\left(
	\begin{aligned}
		a_{i}-\int_{A_{i}(\alpha)\times \mathbb{R}^{d}} \exp\Bigl(f(x,\alpha)+g(y,\beta)-\frac{1}{2}\|x\|^{2}-\frac{1}{2}\|y\|^{2}\Bigr)\gamma(\md x,\md y) \\
		b_{j}-\int_{\mathbb{R}^{d} \times B_{j}(\beta)} \exp\Bigl(f(x,\alpha)+g(y,\beta)-\frac{1}{2}\|x\|^{2}-\frac{1}{2}\|y\|^{2}\Bigr)\gamma(\md x,\md y)
	\end{aligned}
	\right),
\end{equation*}
where \(A_{i}\) and \(B_{j}\) are Laguerre cells given by
\begin{equation*}
	A_i(\alpha) :=\{x\in \mathbb{R}^{d}:f(x,\alpha)=\langle x,x_i\rangle +\alpha_i\} \text{ and } B_j(\beta) :=\{y\in \mathbb{R}^{d}:g(y,\beta)=\langle y,y_j\rangle +\beta_j\}.
\end{equation*}
Proposition \ref{prop-ga} establishes a linear convergence rate  in the \(l^{2}\) norm provided the step size \(\eta\) is chosen below a threshold \(\eta_{0}\).

The second approach is motivated by the Sinkhorn algorithm where we  view the first order optimality condition  as a fixed point problem.
Since \(\nabla U(\alpha,\beta)=0\) does not admit an explicit closed-form update, we introduce an approximate iteration scheme:
\begin{equation*}
	\left\{	\begin{aligned}
		\alpha_{i}^{t+1} & :=\frac{1}{\lambda}\log(a_{i})-\frac{1}{\lambda}\log\Biggl(\int \phi_{\lambda}(x,\alpha)^{1-\lambda} \psi_{\lambda}(y,\beta)  \exp\Bigl(\lambda \langle x, x_{i}\rangle -\frac{1}{2}\|x\|^{2}-\frac{1}{2}\|y\|^{2}\Bigr) \gamma(\md x,\md y)\Biggr), \\
		\beta_{j}^{t+1}  & :=\frac{1}{\lambda}\log(b_{j})-\frac{1}{\lambda}\log\Biggl(\int \phi_{\lambda}(x,\alpha) \psi_{\lambda}(y,\beta)^{1-\lambda}  \exp\Bigl(\lambda \langle y, y_{j} \rangle-\frac{1}{2}\|x\|^{2}-\frac{1}{2}\|y\|^{2}\Bigr) \gamma(\md x,\md y)\Biggr),
	\end{aligned}
	\right.
\end{equation*}
where \(\phi_{\lambda}(x,\alpha)=\bigl(\sum_{i=1}^{n}\exp(\lambda \langle x, x_{i} \rangle+\lambda \alpha_{i})\bigr)^{1/\lambda}\) and \(\psi_{\lambda}(y,\beta)=\bigl(\sum_{j=1}^{m}\exp(\lambda \langle y, y_{j} \rangle+\lambda \beta_{j})\bigr)^{1/\lambda}\).
Proposition \ref{prop-sinkhorn} shows that this mapping is a contraction in the  \(l^{\infty}\) norm, yielding linear convergence to a unique fixed point \((\alpha^{\lambda},\beta^{\lambda})\).
As \(\lambda\) goes to infinity, the limit of \((\alpha^{\lambda},\beta^{\lambda})\) satisfies the first order condition \(\nabla U(\alpha,\beta)\), and thus recovers the optimal dual potentials.

\subsection{Related literature}
We discuss the connections between the transport-relaxed Schrödinger bridge  and several strands of literature  on regularized optimal transport and relaxed Schrödinger bridge problem.
The setting closest to ours is that of \citet{garg24soft}, where the authors introduce a relaxed Schrödinger bridge by adding an entropic penalization on the marginal constraints.
It is later extended in \citet{ma2025schr} which allows a more general penalization beyond the relative entropy.
However, in all these works, even after relaxation, the marginals of the resulting bridge are still required to be absolutely continuous with respect to the marginals of the reference measure, which is inconsistent with many practical settings where the marginals are empirical measures and hence typically do not admit densities.
As a consequence, the semi-discrete setting considered in the present paper
, which closely reflects the data nature of modern machine learning problems, falls outside the scope of the existing theory.
Moreover, our quantitative asymptotic analysis in Section~\ref{sec-blowup} appears to be novel and is not covered by previous results on relaxed Schrödinger bridges.
We refer readers to \citet{leonard2013survey} for a more extensive review.

Our duality result in Theorem \ref{thm-dual} is closely linked to the well-known duality of entropic optimal transport; see for example \citet{peyre2019computational, cuturi2013sinkhorn,nutz2021introduction}.
Conceptually, the characterization of the limit Schrödinger bridge as the penalty blows up in Section \ref{sec-blowup} aligns with the entropic selection in entropic optimal transport; see \citet{marino18entropic,aryan25entropic,ley25entropic}.
Among other regularized optimal transport problems, the quadratically regularized optimal transport \citep{nutz25quadratically, gonzalezsanz2025linear} has recently gained an increasing attention due to its analytic tractability.
Although the transport-relaxed Schrödinger bridge differs in structure from quadratically regularized optimal transport, their dual problems share similar key geometric properties: neither is strictly concave, yet both are locally uniformly concave at the optimizer.
This observation underlies our convergence analysis of the gradient ascent algorithm in Section~\ref{sec-ga}, which is inspired by its quadratically regularized optimal transport counterpart in \citet{gonzalezsanz2025linear}.
We also remark that the unbalanced optimal transport problem \citep{sejourne23unblanced} is formulated in a similar form.
Compared to \eqref{eqn-tsb}, the relative entropy term and the optimal transport term swap their roles.

The adoption of transport relaxation/regularization has appeared extensively in the context of robust optimization. A prominent example is Wasserstein distributionally robust optimization, where the objective is penalized by a transport cost between the adversarial and reference models.
Duality results for this setting are established in \citet{blanchet19quantifying,zhang24short}, and the sensitivity with respect to the penalty strength is obtained in \citet{bartl21sensitivity}.
Furthermore, transport relaxation  extends naturally to dynamic settings with the notion of causal optimal transport.
We refer interested readers to \citet{jiang24Duality,jiang25sensitivity,sauldubois24first,bartl23sensitivity} for more details.
More broadly, optimal transport has emerged as a popular regularization tool in modern machine learning; see for example \citet{sinha18certifying,bai23wasserstein,bai25wasserstein}.

Finally, our work is related to applications of Schrödinger bridges in generative AI, which provide key motivation for the framework we study. This direction has grown rapidly in the recent generative modeling literature.
Representative methods approximate the classical iterative proportional fitting (IPF) procedure of \citet{deming1940least} using either score matching or maximum likelihood~\citep{de2021diffusion,vargas2021solving}, with extensions based on auxiliary bridges to address non-smooth target distributions~\citep{wang2021deep}.
Since then, a substantial body of work has further advanced Schrödinger bridge–based generative modeling, including
\citep{chen2021likelihood,peluchetti2023diffusion,hamdouche2023generative,shi2024diffusion,alouadi2026lightsbb}. More broadly speaking, the mathematics of Schrödinger bridges closely resembles the core idea behind diffusion models: generating samples by starting from a simple noise distribution and running a time-reversed diffusion process toward the data distribution. Representative diffusion-model works include \cite{ho2020denoising,song2020score,chen2023score,han2024neural}. We refer interested readers to \cite{chen2024overview,lai2025principles} for more details.

\subsection{Outline}
The rest of the paper is organized as follows.
In Section \ref{sec-pre}, we introduce necessary notations and recall basic properties of optimal transport and Schrödinger bridge.
In Section \ref{sec-tsb}, a general duality formulation is derived for \eqref{eqn-tsb} in Theorem \ref{thm-dual}.
Under a semi-discrete setting, the duality reduces to a finite-dimensional optimization problem.
In Theorem \ref{thm-semi}, we establish the existence and uniqueness of both the primal and the dual problem.
We investigate the limiting and asymptotic behavior of \eqref{eqn-tsb2}.
We characterize the limit of \(\pi^{*,\varepsilon}\) as the solution to a discrete Schrödinger bridge problem and derive the leading-order blow-up of \(I^{\varepsilon}(\pi^{*,\varepsilon})\).
In Sections \ref{sec-ga} and \ref{sec-sinkhorn}, we propose two numerical algorithms and obtain their linear convergence guarantees in Propositions \ref{prop-ga} and \ref{prop-sinkhorn}.
In Section \ref{sec-exam}, we conclude with numerical examples.

\section{Preliminaries}
\label{sec-pre}
\subsection{Notations}
For a  Polish space \(\mathcal{Z}\), we equip it with its Borel \(\sigma\)-algebra \(\mathcal{B}(\mathcal{Z})\).
Let \( \mathscr{P}(\mathcal{Z})\) be the space of Borel probability measures on \(\mathcal{Z}\)  and \(\mathscr{M}_{+}(\mathcal{Z})\) be the space of positive Borel measures.
For \( \sigma ,\gamma\in \mathscr{M}_{+}(\mathcal{Z})\), their relative entropy is given by
\begin{equation*}
	H(\sigma|\gamma):= \left\{
	\begin{aligned}
		 & \E_{\sigma}\Bigl[\log\Bigl(\frac{\md \sigma}{\md \gamma}\Bigr)\Bigr]  \text{ if } \sigma\ll \gamma, \\
		 & +\infty \hspace{1.5cm}                                                           \text{ otherwise.}
	\end{aligned}
	\right.
\end{equation*}
We denote the space of continuous functions on \(\mathcal{Z}\) by \(C(\mathcal{Z})\), the space of Borel measurable functions by \(L^{0}(\mathcal{Z})\), and the space of \(\eta\)-integrable functions by \(L^{1}(\mathcal{Z},\eta)=L^{1}(\eta)\).

Let \(\mathcal{X}\) and \(\mathcal{Y}\) be two Polish spaces.
For any \(\mu\in \mathscr{P}(\mathcal{X})\) and \(\nu\in \mathscr{P}(\mathcal{Y})\),  the set of couplings between \(\mu\) and \(\nu\) is given by
\begin{equation*}
	\Pi(\mu,\nu):=\{\pi\in \mathscr{P}(\mathcal{X}\times\mathcal{Y}):\pi(\cdot\times \mathcal{Y})=\mu(\cdot)\text{ and } \pi(\mathcal{X}\times\cdot)=\nu(\cdot)\}.
\end{equation*}
By \(\Pi(\mu,*)\) we denote the set of couplings with a given first marginal \(\mu\).
Given \(\mu\in \mathscr{P}(\mathcal{X})\) and a measurable map \(\Phi:\mathcal{X}\to\mathcal{Y}\), we define the \emph{pushforward} map \(\Phi_{\#}: \mathscr{P}(\mathcal{X})\to \mathscr{P}(\mathcal{Y})\) as
\begin{equation*}
	\Phi_{\#}\mu:=\mu\circ \Phi^{-1} \quad\text{for any } \mu\in \mathscr{P}(\mathcal{X}).
\end{equation*}
For any \(\pi\in \mathscr{P}(\mathcal{X}\times \mathcal{Y})\), we write \(\pi_{x}=[(x,y)\mapsto x]_{\#}\pi\) and \(\pi_{y}=[(x,y)\mapsto y]_{\#}\pi\) as the marginals of \(\pi\).
For functions \(f:\mathcal{X}\to \mathbb{R}\) and \(g: \mathcal{Y}\to \mathbb{R}\), we define \(f\oplus g:\mathcal{X} \times \mathcal{Y}\to \mathbb{R}\) as \(f\oplus g(x,y)= f(x)+g(y)\).

By \(\mathbf{0}\)(\(\mathbf{1}\)) we denote the vector with all zero(unit) entries.
For a vector \(v=(v_{1},\dots,v_{d})\in \mathbb{R}^{d}\), we write \(\overline{v}=\max_{i}v_{i}\), \(\underline{v}=\min_{i}v_{i}\).
Let \(\alpha^{i}\in \mathbb{R}^{n}\) and \(\beta^{i}\in \mathbb{R}^{m}\) for \(i=1,2\).
We say \((\alpha^{1},\beta^{1})\sim_{\oplus}(\alpha^{2},\beta^{2})\) if there exists \(r\in\mathbb{R}\) such that
\(\alpha^{1}=\alpha^{2}+r\mathbf{1}\) and \(\beta^{1}=\beta^{2}-r\mathbf{1}\).
On the quotient space \(\mathbb{R}^{n+m}_{\oplus}:=\mathbb{R}^{n+m}/_{\sim \oplus}\), we equip it with the \(l^{2}_{\oplus}\) norm and the \(l^{\infty}_{\oplus}\) norm given by
\begin{equation*}
	\|(\alpha,\beta)\|_{l^{2}_{\oplus}}=\|(\alpha_{i}+\beta_{j})_{1\leq i\leq n, 1\leq j\leq m}\|_{l^{2}} \text{ and }\|(\alpha,\beta)\|_{l^{\infty}_{\oplus}}=\|(\alpha_{i}+\beta_{j})_{1\leq i\leq n, 1\leq j\leq m}\|_{l^{\infty}}.
\end{equation*}
It is direct to verify both \(l^{2}_{\oplus}\) and \(l^{\infty}_{\oplus}\) are complete norms on \(\mathbb{R}^{n+m}_{\oplus}\), and they induce the same topology.
We denote the quotient map by \(\proj_{\oplus}:\mathbb{R}^{n+m}\to \mathbb{R}^{n+m}_{\oplus}\).

Throughout the paper, we follow the convention that \(\infty-\infty=-\infty\).
In particular, we define for any \(f\in L^{0}(\mathcal{X})\)
\begin{equation*}
	\E_{\mu}[f]=\E_{\mu}[f^{+}]-\E_{\mu}[f^{-}].
\end{equation*}
For any map \(\Phi:\mathcal{X} \to \mathcal{X}\), we write its \(t\)-th iteration as \(\Phi^{(t)}\).

\subsection{Optimal transport and Schrödinger bridge}
Let \(\mu\in \mathscr{P}(\mathcal{X}),\nu\in \mathscr{P}(\mathcal{Y})\), and \(c:\mathcal{X}\times \mathcal{Y}\to \mathbb{R}\) be a Borel measurable function.
The optimal transport problem associated to \(c\) is given by
\begin{equation*}
	\ot(\mu,\nu):=\inf_{\pi\in \Pi(\mu,\nu)}\E_{\pi}[c(X,Y)].
\end{equation*}
\begin{defn}
	Let \(f:\mathcal{X}\to\mathbb{R}\).
	We say \(f\) is \(c\)-convex if there exists \(g:\mathcal{Y}\to \mathbb{R}\)  such that
	\begin{equation*}
		f(x)=-\inf_{y\in \mathcal{Y}}\{c(x,y)+g(y)\}.
	\end{equation*}
	The \(c\)-supdifferential of \(f\) is defined as
	\begin{equation*}
		\partial^{c}f(x):=\{y \in \mathcal{Y}: f(x')\leq f(x) + c(x',y)-c(x,y) \text{ for any } x'\in \mathcal{X}\}.
	\end{equation*}
	We denote \(\{(x,y)\in \mathcal{X}\times \mathcal{Y}: y\in \partial^{c}f(x)\}\) by \(\partial^{c}f\).
\end{defn}
We recall the characterization of the optimal coupling from \citet[Theorem 2.7]{gangbo1996geometry}.
\begin{thm}
	\label{thm-ot}
	Let \(\pi\in \Pi(\mu,\nu)\).
	Then \(\pi\) is an optimal coupling if and only if \(\pi\) is supported on \(\partial^{c}f\) for some \(c\)-concave \(f\).
\end{thm}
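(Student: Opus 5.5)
This is the classical Gangbo--McCann characterization: \(\pi\in\Pi(\mu,\nu)\) is optimal if and only if it is supported on \(\partial^{c}f\) for some \(c\)-concave \(f\). I would prove it through Kantorovich duality combined with \(c\)-cyclical monotonicity, and I would assume the standing hypotheses implicit in the citation: \(c\) lower semicontinuous, bounded below, and \(\ot(\mu,\nu)<\infty\). For our quadratic cost on \(\mathbb{R}^{d}\) with finite second moments these hold. Throughout I use the sign convention of the definition, \(f(x)=-\inf_{y}\{c(x,y)+g(y)\}\); then \(y\in\partial^{c}f(x)\) is exactly the equality case \(f(x)+g(y)=-c(x,y)\) for the conjugate \(g\).

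\emph{Sufficiency.} Suppose \(\pi\) is concentrated on \(\partial^{c}f\). Define the conjugate \(g(y):=\sup_{x}\{-c(x,y)-f(x)\}\). Then \(f(x)+g(y)\ge -c(x,y)\) holds everywhere, with equality on \(\partial^{c}f\). Now take any competitor \(\tilde\pi\in\Pi(\mu,\nu)\). Integrating the inequality against \(\tilde\pi\) and the equality against \(\pi\) gives
\[
\E_{\tilde\pi}[c]\ \ge\ -\E_{\mu}[f]-\E_{\nu}[g]\ =\ \E_{\pi}[c],
\]
so \(\pi\) is optimal. The only technical point is integrability of \(f\) and \(g\). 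I would handle it by truncation, or by noting that \(\E_\pi[c]<\infty\) and \(c\) bounded below give \(f\oplus g\in L^{1}(\pi)\). Since \(f\oplus g\) is then integrable under a coupling, and \(f\), \(g\) are each bounded on one side, it follows that \(f\in L^{1}(\mu)\) and \(g\in L^{1}(\nu)\).

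\emph{Necessity.} This is the harder direction. First show that an optimal \(\pi\) is concentrated on a \(c\)-cyclically monotone set \(\Gamma\). The argument is by contradiction: if a cycle \((x_k,y_k)_{k\le N}\) in the support strictly violates cyclical monotonicity, then perturbing \(\pi\) on small neighbourhoods of these points (by swapping mass along the cycle) strictly lowers the cost, using lower semicontinuity of \(c\). Then build a potential by Rockafellar's construction. Fix \((x_0,y_0)\in\Gamma\) and set
\[
f(x):=\sup\Bigl\{\sum_{k=0}^{N}\bigl(c(x_k,y_k)-c(x_{k+1},y_k)\bigr): N\ge0,\ (x_k,y_k)\in\Gamma,\ x_{N+1}=x\Bigr\}
\]
(with sign adjusted to match the definition). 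This \(f\) is \(c\)-concave/convex in the paper's sense. Cyclical monotonicity gives \(f(x_0)\le 0\), hence \(f\) is finite, and by construction \(\Gamma\subseteq\partial^{c}f\).

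The main obstacle is the first step of necessity: showing that optimality forces \(c\)-cyclical monotonicity for a general measurable \(\pi\). For a merely lower semicontinuous cost this requires a careful measure-theoretic swapping construction. To avoid it, I would instead go through duality: optimal dual potentials \((\varphi,\psi)\) exist, and complementary slackness \(\varphi\oplus\psi=-c\) holds \(\pi\)-a.s. Replacing \(\varphi\) by its double \(c\)-transform yields a \(c\)-concave \(f\) with \(\supp\pi\subseteq\partial^{c}f\) up to a null set. Since the paper only invokes this result, both routes amount to citing \citet{gangbo1996geometry}.
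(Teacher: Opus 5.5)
The paper gives no argument for this statement; it simply quotes \citet[Theorem 2.7]{gangbo1996geometry}. Your outline, which also ends by citing that result for necessity, is therefore in line with the paper. One step you do write out fails, however: the integrability justification in the sufficiency part.

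It is not true that \(f\) and its conjugate \(g\) are each bounded on one side. For \(c=\tfrac12\|\cdot-\cdot\|^2\) on \(\mathbb{R}^2\), take \(f(x)=\tfrac12x_1^2-\kappa x_2^2\) with \(\kappa\in(0,\tfrac12)\). It is \(c\)-convex in the paper's sense, since \(f+\tfrac12\|x\|^2\) is convex. Its conjugate is \(g(y)=\sup_x\{-c(x,y)-f(x)\}=-\tfrac14y_1^2+\tfrac{\kappa}{1-2\kappa}y_2^2\), and both functions are unbounded above and below. Without one-sided control, integrability of \(f\oplus g\) under a coupling says nothing about the summands (e.g.\ \(\pi=(\Id,\Id)_{\#}\mu\), \(g=-f\)). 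Moreover, \(\E_\pi[c]<\infty\) is not known a priori for an arbitrary \(\pi\) concentrated on \(\partial^c f\).

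There are two repairs. In your quadratic setting with finite second moments, use the integrable lower bounds \(f\ge -c(\cdot,y_0)-g(y_0)\) and \(g\ge -c(x_0,\cdot)-f(x_0)\), and then \(f^{+}\le|f\oplus g|+g^{-}\). For a general lower semicontinuous cost bounded below, the potentials need not be integrable at all, and you must actually carry out the truncation:
\begin{itemize}
\item Set \(T_n(t)=(t\wedge n)\vee(-n)\) and \(\xi_n=-(T_n\circ f\oplus T_n\circ g)\). Then \(\xi_n\) has the sign of \(\xi=-(f\oplus g)\) and \(|\xi_n|\uparrow|\xi|\).
\item After shifting so that \(c\ge 0\), the equality \(\xi=c\) \(\pi\)-a.s.\ gives \(\int\xi_n\,\md\pi\uparrow\E_\pi[c]\) by monotone convergence.
\item The inequality \(\xi\le c\) gives \(\xi_n\le\xi^{+}\le c\), hence \(\int\xi_n\,\md\tilde\pi\le\E_{\tilde\pi}[c]\).
\item Finally \(\int\xi_n\,\md\pi=\int\xi_n\,\md\tilde\pi\), because \(T_n\circ f\) and \(T_n\circ g\) are bounded.
\end{itemize}
This yields optimality with no integrability of \(f,g\) whatsoever.

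For necessity, the detour through existence of optimal dual potentials does not avoid the cyclical-monotonicity step. Dual attainment is usually proved exactly by showing that an optimal plan is concentrated on a \(c\)-cyclically monotone set and then applying Rockafellar's construction. Integrable maximizers are also only guaranteed under an extra bound such as \(c\le a\oplus b\) with \(a\in L^1(\mu)\), \(b\in L^1(\nu)\). So that direction remains a citation, as in the paper.

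Two smaller points:
\begin{itemize}
\item The duality route only gives \(\pi(\partial^c f)=1\). To conclude \(\supp\pi\subseteq\partial^c f\), you need \(\partial^c f\) to be closed. That holds when \(c\) is continuous, because \(f\), a supremum of continuous functions, is then lower semicontinuous; this covers the quadratic cost. For a merely lower semicontinuous \(c\) this argument is not available.
\item The paper's displayed \(\partial^c f\) (\(x\) maximizes \(f-c(\cdot,y)\)) is the Gangbo--McCann superdifferential of a \(c\)-concave \(f=\inf_y\{c(\cdot,y)+g(y)\}\). Your equality-case description (\(x\) minimizes \(f+c(\cdot,y)\)) agrees with it only after replacing \(f\) by \(-f\). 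That is harmless, but you should state it rather than attribute it to the definition.
\end{itemize}
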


Let \(\gamma\) be a reference probability measure on \( \mathscr{P}(\mathcal{X}\times \mathcal{Y})\).
The Schrödinger bridge is a coupling \(\pi\in \Pi(\mu,\nu)\) minimizing the relative entropy \(H(\pi|\gamma)\).
We first state an elementary inequality for the relative entropy.
\begin{lem}
	\label{lem-entropy}
	Let \(\pi\in \Pi(\mu,\nu)\) and \((f,g)\in L^{0}(\mathcal{X})\times L^{0}(\mathcal{Y})\).
	We have
	\begin{equation*}
		H(\pi|\gamma)\geq \E_{\mu}[f]+\E_{\nu}[g]-\E_{\gamma}[\exp(f\oplus g)] +1.
	\end{equation*}
\end{lem}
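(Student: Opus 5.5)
The plan is to use the Donsker--Varadhan (Gibbs) variational inequality in its elementary pointwise form \(ab \leq a\log a - a + e^{b}\) for \(a\geq 0\), \(b\in\mathbb{R}\), which is Young's inequality for the convex conjugate pair \(a\mapsto a\log a - a\) and \(b\mapsto e^{b}\). First dispose of the trivial case: if \(\pi\not\ll\gamma\), then \(H(\pi|\gamma)=+\infty\) and there is nothing to prove. So assume \(\pi\ll\gamma\) and write \(p=\md\pi/\md\gamma\). Set \(h=f\oplus g\).

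Apply the pointwise inequality with \(a=p(x,y)\) and \(b=h(x,y)\). This gives
\begin{equation*}
	p\,h\leq p\log p - p + e^{h}\quad \gamma\text{-a.e.}
\end{equation*}
Integrating against \(\gamma\) yields \(\E_{\pi}[h]\leq H(\pi|\gamma) - 1 + \E_{\gamma}[e^{h}]\), using \(\int p\,\md\gamma=1\). Because \(\pi\in\Pi(\mu,\nu)\), the marginals give \(\E_{\pi}[f\oplus g]=\E_{\mu}[f]+\E_{\nu}[g]\), and rearranging gives the claimed bound.

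The main obstacle is integrability bookkeeping, since \(f,g\) are only measurable and the paper's convention \(\infty-\infty=-\infty\) is in force. Handle it in the following order.

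\begin{enumerate}[(i)]
	\item If \(\E_{\gamma}[e^{h}]=+\infty\), the right-hand side is \(-\infty\) (or the inequality is trivial), so assume \(\E_{\gamma}[e^{h}]<\infty\).
	\item If \(H(\pi|\gamma)=+\infty\), the bound is trivial, so assume \(H(\pi|\gamma)<\infty\). Then \(p\log p\in L^{1}(\gamma)\), because its negative part is bounded by \(e^{-1}\).
	\item The pointwise inequality now shows that \((p\,h)^{+}\) is \(\gamma\)-integrable, i.e.\ \(h^{+}\in L^{1}(\pi)\). Hence \(\E_{\pi}[h]\) is well defined in \([-\infty,\infty)\), and integrating the inequality is legitimate.
	\item To split \(\E_{\pi}[f\oplus g]\) into \(\E_{\mu}[f]+\E_{\nu}[g]\), note that this splitting is immediate when \(f^{+}\in L^{1}(\mu)\) and \(g^{+}\in L^{1}(\nu)\). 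Otherwise, say \(\E_{\mu}[f^{+}]=\infty\). If \(\E_{\mu}[f^{-}]=\infty\) as well, or \(\E_{\nu}[g]=-\infty\), then the convention \(\infty-\infty=-\infty\) makes the left side \(-\infty\) and the claim holds. So the remaining case is \(\E_{\mu}[f]=+\infty\) with \(\E_{\nu}[g]>-\infty\).
	\item To exclude this case, note that \(h^{+}\in L^{1}(\pi)\) together with \(g^{-}\in L^{1}(\nu)\) gives \(f^{+}\leq h^{+}+g^{-}\), which is \(\pi\)-integrable. This contradicts \(\E_{\mu}[f^{+}]=\infty\).
	\item In all remaining cases, \(\E_{\mu}[f]+\E_{\nu}[g]=\E_{\pi}[h]\) or is \(-\infty\), and the proof concludes.
\end{enumerate}
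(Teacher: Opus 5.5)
Your proof is correct, and it takes a different route from the paper in how it handles the extended-valued expectations.

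The paper simply asserts the inequality for \(f\in L^{1}(\mu)\), \(g\in L^{1}(\nu)\). It discards the trivial cases \(\E_{\mu}[f^{-}]=\infty\), \(\E_{\nu}[g^{-}]=\infty\), \(\E_{\gamma}[\exp(f\oplus g)]=\infty\), and then truncates from above with \(f^{n}=f\wedge n\), \(g^{n}=g\wedge n\). These truncations are integrable, and monotone convergence passes the bound to the limit, using \(\E_{\mu}[f^{n}]\uparrow \E_{\mu}[f]\), \(\E_{\nu}[g^{n}]\uparrow \E_{\nu}[g]\) and \(\E_{\gamma}[\exp(f^{n}\oplus g^{n})]\uparrow \E_{\gamma}[\exp(f\oplus g)]\).

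You instead prove the core inequality explicitly, by integrating the pointwise Young inequality \(ab\leq a\log a-a+e^{b}\). You then argue directly about integrability: finiteness of \(H(\pi|\gamma)\) and \(\E_{\gamma}[e^{f\oplus g}]\) forces \((f\oplus g)^{+}\in L^{1}(\pi)\). That in turn excludes the bad case \(\E_{\mu}[f]=+\infty\), \(\E_{\nu}[g]>-\infty\), via \(f^{+}\leq (f\oplus g)^{+}+g^{-}\).

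\emph{What the paper's route buys.} The truncation is shorter and makes your case analysis in (iv)--(v) unnecessary. If \(\E_{\mu}[f]=+\infty\), the monotone limit simply forces \(H(\pi|\gamma)=\infty\), so no separate exclusion argument is needed.

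\emph{What your route buys.} It is self-contained: it supplies the proof of the \(L^{1}\) case, which the paper leaves to the reader. It also yields a useful byproduct: \(f\oplus g\) has a \(\pi\)-integrable positive part whenever \(H(\pi|\gamma)\) and \(\E_{\gamma}[\exp(f\oplus g)]\) are both finite.
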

\begin{proof}
	For \(f\in L^{1}(\mu)\) and \(g\in L^{1}(\nu)\), we notice the above inequality holds.
	We recall that we follow the convention that \(\infty-\infty=-\infty\).
	It then suffices to show the inequality for  \(\E_{\mu}[f^{-}]<\infty\), \(\E_{\nu}[g^{-}]<\infty\), and \(\E_{\gamma}[\exp(f\oplus g)]<\infty\).
	We take \(f^{n}=f\wedge n \) and \(g^{n}=g\wedge n\).
	The desired inequality holds for \((f^{n},g^{n})\) as \(f^{n}\in L^{1}(\mu)\) and \(g^{n}\in L^{1}(\nu)\).
	By taking \(n\) to infinite, we conclude the proof from the monotone convergence theorem.
\end{proof}

We recall the duality result for the Schrödinger bridge problem from \citet[Theorem 3.2]{nutz2021introduction}.
\begin{thm}
	We assume that there exists \(f^{*}\in L^{1}(\mu)\), \(g^{*}\in L^{1}(\nu)\) and \(\pi^{*}\in \Pi(\mu,\nu)\) satisfying
	\begin{equation*}
		\frac{\md \pi^{*}}{\md \gamma}= \exp(f^{*}\oplus g^{*}).
	\end{equation*}
	It holds that
	\begin{equation*}
		\inf_{\pi\in \Pi(\mu,\nu)}H(\pi|\gamma)= \sup_{f\in L^{1}(\mu), g\in L^{1}(\nu)}\{\E_{\mu}[f]+\E_{\nu}[g]-\E_{\gamma}[\exp(f\oplus g)]+1\}.
	\end{equation*}
	In particular, \(\pi^{*}\) is the primal optimizer and \((f^{*},g^{*})\) is the dual optimizer.
\end{thm}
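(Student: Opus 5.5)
This duality theorem is recalled from \citet[Theorem 3.2]{nutz2021introduction}, but it can be checked directly from Lemma \ref{lem-entropy} by a sandwich argument. The plan is to show that the given triple \((\pi^{*},f^{*},g^{*})\) makes the primal value at \(\pi^{*}\) equal to the dual value at \((f^{*},g^{*})\). Weak duality then forces both to be optimal and gives equality of the two values.

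\emph{Weak duality.} For any \(\pi\in\Pi(\mu,\nu)\) and any \(f\in L^{1}(\mu)\), \(g\in L^{1}(\nu)\), Lemma \ref{lem-entropy} gives
\begin{equation*}
H(\pi|\gamma)\geq \E_{\mu}[f]+\E_{\nu}[g]-\E_{\gamma}[\exp(f\oplus g)]+1 .
\end{equation*}
Taking the infimum over \(\pi\) and the supremum over \((f,g)\) yields \(\inf\geq\sup\).

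\emph{Attainment.} By assumption \(\frac{\md\pi^{*}}{\md\gamma}=\exp(f^{*}\oplus g^{*})\) with \(\pi^{*}\in\Pi(\mu,\nu)\). First,
\begin{equation*}
\E_{\gamma}[\exp(f^{*}\oplus g^{*})]=\pi^{*}(\mathcal{X}\times\mathcal{Y})=1 .
\end{equation*}
Second, \(\log\frac{\md\pi^{*}}{\md\gamma}=f^{*}(x)+g^{*}(y)\) holds \(\pi^{*}\)-a.e. Since \(f^{*}\in L^{1}(\mu)\), \(g^{*}\in L^{1}(\nu)\) and \(\pi^{*}\) has marginals \(\mu,\nu\), the function \(f^{*}\oplus g^{*}\) lies in \(L^{1}(\pi^{*})\). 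Hence
\begin{equation*}
H(\pi^{*}|\gamma)=\E_{\pi^{*}}[f^{*}\oplus g^{*}]=\E_{\mu}[f^{*}]+\E_{\nu}[g^{*}] .
\end{equation*}
This equals \(\E_{\mu}[f^{*}]+\E_{\nu}[g^{*}]-\E_{\gamma}[\exp(f^{*}\oplus g^{*})]+1\), which is the dual objective at \((f^{*},g^{*})\).

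Combining the two parts gives
\begin{equation*}
\inf_{\pi}H(\pi|\gamma)\leq H(\pi^{*}|\gamma)=\text{dual}(f^{*},g^{*})\leq\sup\leq\inf .
\end{equation*}
So all quantities coincide, \(\pi^{*}\) attains the infimum and \((f^{*},g^{*})\) attains the supremum.

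The only delicate point is the integrability in the attainment step. The integral defining \(H(\pi^{*}|\gamma)\) must be well-defined and not of the form \(\infty-\infty\), and it must split into the two marginal expectations. Both follow from \(|f^{*}(x)+g^{*}(y)|\leq|f^{*}(x)|+|g^{*}(y)|\) together with the marginal constraints, which give \(\E_{\pi^{*}}|f^{*}\oplus g^{*}|\leq\E_{\mu}|f^{*}|+\E_{\nu}|g^{*}|<\infty\). With this bound, linearity of the integral applies and the remaining steps are bookkeeping.
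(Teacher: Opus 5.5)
Your proof is correct: weak duality from Lemma \ref{lem-entropy}, combined with \(\E_{\gamma}[\exp(f^{*}\oplus g^{*})]=1\) and \(H(\pi^{*}|\gamma)=\E_{\mu}[f^{*}]+\E_{\nu}[g^{*}]\) (justified by the integrability bound you give), closes the gap and shows that both \(\pi^{*}\) and \((f^{*},g^{*})\) are optimal. The paper does not prove this statement itself and simply cites \citet[Theorem 3.2]{nutz2021introduction}, but your sandwich argument is the same scheme the paper uses to prove Theorem \ref{thm-dual}.
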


As the dual problem is concave, \((f^{*},g^{*})\) satisfies the first order optimality condition
\begin{equation*}
	\left\{
	\begin{aligned}
		f^{*}(x) & = -\log\Bigl(\int_{\mathcal{Y}} \exp(g^{*}(y))\frac{\md \gamma}{\md \mu\otimes \nu}(x,y)\nu(\md y)\Bigr), \\
		g^{*}(y) & = -\log\Bigl(\int_{\mathcal{X}} \exp(f^{*}(x))\frac{\md \gamma}{\md \mu\otimes \nu}(x,y)\mu(\md x)\Bigr).
	\end{aligned}
	\right.
\end{equation*}
It is also known as the Schrödinger equation system.
We recall the following well-posedness of the Schrödinger equation in a discrete setup.
\begin{thm}
	\label{thm-se}
	Let \(x_{i}\in \mathcal{X}\) and \(y_{j}\in \mathcal{Y}\) for \(1\leq i\leq n\) and \(1\leq j\leq m\).
	We consider \(\mu=\sum_{i=1}^{n}a_{i}\delta_{x_{i}}\), \(\nu=\sum_{j=1}^{m}b_{j}\delta_{y_{j}}\), and \(\sigma=\sum_{i=1}^{n}\sum_{j=1}^{m} \sigma_{ij} \delta_{(x_{i},y_{j})}\) with \(a_{i}\), \(b_{j}\), \(\sigma_{ij}\) positive and \(\sum_{i=1}^{n}a_{i}=\sum_{j=1}^{m}b_{j}=1\).
	The Schrödinger equation for \(\inf_{\pi\in\Pi(\mu,\nu)}H(\pi|\sigma)\) reads as
	\begin{equation}
		\label{eqn-discse}
		\left\{
		\begin{aligned}
			a_{i} & =\sum_{j=1}^{m}\sigma_{ij}\exp(p^{*}_{i}+q^{*}_{j}), \\
			b_{j} & =\sum_{i=1}^{n}\sigma_{ij}\exp(p^{*}_{i}+q^{*}_{j}), \\
		\end{aligned}
		\right.
	\end{equation}
	and its admits a unique solution \((p^{*},q^{*})\) up to the equivalence given by \(\sim_{\oplus}\).
	In particular,
	\begin{equation*}
		\argmin_{\pi\in\Pi(\mu,\nu)}H(\pi|\gamma)=\pi^{*}=\sum_{i=1}^{n}\sum_{j=1}^{m}\sigma_{ij}\exp(p^{*}_{i}+q^{*}_{j})\delta_{(x_{i},y_{j})}.
	\end{equation*}
	We say \((p^{*},q^{*})\) is the dual potential associated to the above Schrödinger bridge problem.
\end{thm}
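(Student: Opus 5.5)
We prove existence by minimizing a concave dual, uniqueness from that minimizer's optimality conditions, and the primal identification by entropy duality. Set $K_{ij}:=\sigma_{ij}$ and consider the discrete dual function
\begin{equation*}
	D(p,q)=\sum_{i=1}^{n}a_{i}p_{i}+\sum_{j=1}^{m}b_{j}q_{j}-\sum_{i,j}\sigma_{ij}\exp(p_{i}+q_{j}),
\end{equation*}
which is smooth and concave on $\mathbb{R}^{n+m}$. Its gradient vanishes exactly when \eqref{eqn-discse} holds, so \eqref{eqn-discse} characterizes the maximizers of $D$. Moreover $D$ is invariant under $(p,q)\mapsto(p+r\mathbf{1},q-r\mathbf{1})$, since $\sum_i a_i=\sum_j b_j=1$. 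We may therefore work on $\mathbb{R}^{n+m}_{\oplus}$, normalizing for instance $q_{1}=0$.

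\emph{Existence.} We show that $D$ is coercive on the quotient, i.e.\ $D(p,q)\to-\infty$ as $\|(p,q)\|_{l^{\infty}_{\oplus}}\to\infty$. Since every $\sigma_{ij}>0$, we have $\sum_{ij}\sigma_{ij}e^{p_i+q_j}\geq \underline{\sigma}\,e^{\max_{ij}(p_i+q_j)}$, while the linear part satisfies $\sum_i a_ip_i+\sum_j b_jq_j=\sum_{ij}a_ib_j(p_i+q_j)\leq \max_{ij}(p_i+q_j)$. Put $s_{ij}=p_i+q_j$.
\begin{itemize}
	\item If $\max s_{ij}\to+\infty$, the exponential term dominates the linear one.
	\item If $\min s_{ij}\to-\infty$ while $\max s_{ij}$ stays bounded above, then the linear part $\sum a_ib_js_{ij}\leq (\min s)\,\underline{a}\,\underline{b}+(1-\underline{a}\,\underline{b})\max s$ tends to $-\infty$, using the positivity of all $a_i,b_j$.
\end{itemize}
Hence the superlevel sets of $D$ are bounded in the quotient norm. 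Since $D$ is continuous, a maximizer exists, and the first-order conditions give a solution $(p^{*},q^{*})$ of \eqref{eqn-discse}.

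\emph{Uniqueness modulo $\sim_{\oplus}$.} Suppose $(p,q)$ and $(p',q')$ both maximize $D$. Along the segment between them, $D$ is constant. Write $\Delta_{ij}=(p_i'-p_i)+(q_j'-q_j)$. The second derivative of $D$ along the segment is $-\sum_{ij}\sigma_{ij}e^{s_{ij}}\Delta_{ij}^{2}$. This quantity is strictly negative unless $\Delta_{ij}=0$ for all $i,j$. The condition $\Delta\equiv0$ means $p'-p=r\mathbf{1}$ and $q'-q=-r\mathbf{1}$, which is exactly $\sim_{\oplus}$. This strict concavity on the quotient is the step that needs care, namely that the map $(p,q)\mapsto(p_i+q_j)_{ij}$ has kernel exactly the $\sim_\oplus$ direction. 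That fact is immediate by comparing $\Delta_{ij}-\Delta_{i1}$.

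\emph{Primal identification.} Define $\pi^{*}=\sum_{ij}\sigma_{ij}e^{p^*_i+q^*_j}\delta_{(x_i,y_j)}$. By \eqref{eqn-discse}, $\pi^{*}\in\Pi(\mu,\nu)$. For any $\pi\in\Pi(\mu,\nu)$ with $H(\pi|\sigma)<\infty$, write $\pi=\sum\pi_{ij}\delta_{(x_i,y_j)}$. Then
\begin{equation*}
	H(\pi|\sigma)=H(\pi|\pi^{*})+\sum_{ij}\pi_{ij}(p^*_i+q^*_j)=H(\pi|\pi^{*})+\sum_i a_ip^*_i+\sum_j b_jq^*_j,
\end{equation*}
where the second equality uses the marginal constraints. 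Since $\sigma$ is not a probability measure, $H(\cdot|\cdot)$ is computed for positive measures; but $\pi$ and $\pi^*$ have equal mass, so $H(\pi|\pi^*)\geq0$ with equality iff $\pi=\pi^{*}$. Thus $\pi^{*}$ is the unique minimizer, and $H(\pi^*|\sigma)=\sum a_ip^*_i+\sum b_jq^*_j$.
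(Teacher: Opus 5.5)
Your proof is correct, but it takes a different route from the paper, which gives no argument for this statement. The paper recalls it as the classical well-posedness of the discrete Schr\"odinger system from entropic optimal transport. Remark~\ref{rmk-stab} only notes that a positive, non-normalized $\sigma$ reduces to the probability case by dividing $\sigma$ by its total mass $Z=\sum_{ij}\sigma_{ij}$, which shifts one potential by $\log Z$.

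You instead give a self-contained finite-dimensional argument with three ingredients:
\begin{itemize}
\item coercivity of $D$ on $\mathbb{R}^{n+m}_{\oplus}$, from the bounds $D\le \max s-\underline{\sigma}e^{\max s}$ and $D\le \underline{a}\,\underline{b}\min s+(1-\underline{a}\,\underline{b})\max s$;
\item strict concavity transversal to the $\sim_{\oplus}$ direction, from the second variation $-\sum_{ij}\sigma_{ij}e^{s_{ij}}\Delta_{ij}^{2}$;
\item the Gibbs identity $H(\pi|\sigma)=H(\pi|\pi^{*})+\sum_{i}a_{i}p^{*}_{i}+\sum_{j}b_{j}q^{*}_{j}$ for the primal.
\end{itemize}

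The citation route is shorter and places the result inside the general duality theory. Yours needs no normalization of $\sigma$. It also supplies two facts the paper later uses without proof:
\begin{itemize}
\item the value formula $H(\pi^{*}|\sigma)=\sum_{i}a_{i}p^{*}_{i}+\sum_{j}b_{j}q^{*}_{j}$, invoked in Step~4 of the proof of Theorem~\ref{thm-blowup};
\item a uniform $l^{\infty}_{\oplus}$ bound on the potentials when $\sigma^{n}_{ij}\to\sigma^{\infty}_{ij}>0$. This holds because your superlevel-set bounds depend only on $\underline{a}\,\underline{b}$, $\underline{\sigma}$ and $D(\mathbf{0},\mathbf{0})=-\sum_{ij}\sigma_{ij}$. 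Combined with your uniqueness, it gives the stability asserted in Remark~\ref{rmk-stab}.
\end{itemize}

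One point to state explicitly: when you write an admissible $\pi$ as $\sum_{ij}\pi_{ij}\delta_{(x_{i},y_{j})}$ with row sums $a_{i}$ and column sums $b_{j}$, you use that the $x_{i}$, and likewise the $y_{j}$, are pairwise distinct. The statement tacitly assumes this too.
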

\begin{rmk}
	\label{rmk-stab}
	The reference measure \(\gamma\) is not restricted to  probability measures.
	The existence and uniqueness  of the solution to \eqref{eqn-discse} follow from a simple rescaling.
	Moreover, by combining this uniqueness with the compactness of \(\Pi(\mu,\nu)\) we can show the Schrödinger equation is stable with respect to \(\sigma\).
	Namely, if  \(\lim_{n\to\infty}\sigma_{ij}^{n}=\sigma_{ij}^{\infty}>0\), then the corresponding Schrödinger bridge \(\pi^{*,n}\) converges weakly to the solution of the limit problem, and the dual potentials \((p^{*,n},q^{*,n})\) converges to the limit dual potentials in \(l^{\infty}_{\oplus}\).
\end{rmk}

\subsection{Measurable selection}
For a Polish space \(\mathcal{X}\), the universal \(\sigma\)-algebra on it is defined as \(\mathcal{U}(\mathcal{X})=\cap_{\mu\in \mathscr{P}(\mathcal{X})} \prescript{\mu}{}{}\mathcal{B}(\mathcal{X})\), where \(\prescript{\mu}{}{}\mathcal{B}(\mathcal{X})\) is the completion of \(\mathcal{B}(\mathcal{X})\) under \(\mu\).
In particular, for any universally measurable function \(\varphi\) and \(\mu\in \mathscr{P}(\mathcal{X})\), there exists \(f\in L^{0}(\mathcal{X})\) such that \(\varphi=f\) \(\mu\)--a.s.
We hence define
\begin{equation*}
	\E_{\mu}[\varphi]=\E_{\mu}[f].
\end{equation*}

We also recall the following proposition from \citet[Proposition 7.50]{bertsekas1996stochastic}.
\begin{prop}[Analytic selection theorem]
	\label{prop-sel}
	Let \(\mathcal{X}\) and \(\mathcal{Y}\) be Polish spaces and \(\varphi:\mathcal{X}\times \mathcal{Y}\to \mathbb{R}\) a Borel measurable function.
	Define \(\tilde{\varphi}:\mathcal{X}\to \mathbb{R}\cup \{-\infty\}\) by
	\begin{equation*}
		\tilde{\varphi}(x)=\inf_{y\in \mathcal{Y}}\varphi(x,y).
	\end{equation*}
	Then for any  \(\varepsilon>0\), there exists an analytically measurable function \(s:\mathcal{X}\to \mathcal{X}\) such that \((x,s(x))\in \mathcal{X}\times \mathcal{Y}\) for any \(x\in \mathcal{X}\), and
	\begin{equation*}
		\varphi(x,s(x))\leq\begin{cases}
			\tilde{\varphi}(x)+\varepsilon & \text{if}\quad \tilde{\varphi}(x)>-\infty,  \\
			-1/\varepsilon \quad           & \text{if} \quad \tilde{\varphi}(x)=-\infty.
		\end{cases}
	\end{equation*}
\end{prop}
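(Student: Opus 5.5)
This statement is quoted from Bertsekas--Shreve (Proposition~7.50), so the plan is to reconstruct their argument. The tools are projections of Borel sets being analytic, and the Jankov--von Neumann uniformization theorem. (The codomain of \(s\) should read \(\mathcal{Y}\), so that \((x,s(x))\in\mathcal{X}\times\mathcal{Y}\).)

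\emph{Step 1: \(\tilde\varphi\) is lower semianalytic.} For each \(c\in\mathbb{R}\),
\[
\{x:\tilde\varphi(x)<c\}=\proj_{\mathcal{X}}\{(x,y):\varphi(x,y)<c\}.
\]
This is the projection of a Borel subset of \(\mathcal{X}\times\mathcal{Y}\), hence analytic. Consequently \(\tilde\varphi\) is analytically measurable, so the sets \(\{\tilde\varphi=-\infty\}\) and \(\{\tilde\varphi>-\infty\}\) lie in the analytic \(\sigma\)-algebra.

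\emph{Step 2: discretize the target level.} For \(k\in\mathbb{Z}\), put
\[
A_k=\{x:(k-1)\varepsilon<\tilde\varphi(x)\le k\varepsilon\},\qquad A_{-\infty}=\{\tilde\varphi=-\infty\}.
\]
These sets are analytically measurable and partition \(\mathcal{X}\). Pick an integer \(N\) with \(N\varepsilon\le -1/\varepsilon\). Then define the Borel sets
\[
D_k=\{(x,y):\varphi(x,y)<(k+1)\varepsilon\},\qquad D_{-\infty}=\{(x,y):\varphi(x,y)<N\varepsilon\}.
\]
If \(x\in A_k\), then \(\tilde\varphi(x)\le k\varepsilon<(k+1)\varepsilon\), so \(\proj_{\mathcal{X}}D_k\supseteq A_k\). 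Similarly, if \(x\in A_{-\infty}\), then \(x\in\proj_{\mathcal{X}}D_{-\infty}\).

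\emph{Step 3: uniformize and paste.} By the Jankov--von Neumann theorem, every analytic (here Borel) set \(D\subseteq\mathcal{X}\times\mathcal{Y}\) admits an analytically measurable selector \(s_D:\proj_{\mathcal{X}}D\to\mathcal{Y}\) with \((x,s_D(x))\in D\). Define \(s=s_{D_k}\) on \(A_k\), \(s=s_{D_{-\infty}}\) on \(A_{-\infty}\), and make any measurable choice elsewhere. Since the pieces form a countable partition by analytically measurable sets, \(s\) is analytically measurable.

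\emph{Step 4: check the bound.} For \(x\in A_k\),
\[
\varphi(x,s(x))<(k+1)\varepsilon=(k-1)\varepsilon+2\varepsilon<\tilde\varphi(x)+2\varepsilon .
\]
Running the construction with \(\varepsilon/2\) in the discretization therefore gives \(\varphi(x,s(x))\le\tilde\varphi(x)+\varepsilon\). For \(x\in A_{-\infty}\), \(\varphi(x,s(x))<N\varepsilon\le-1/\varepsilon\), as required.

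\emph{Main obstacle.} The substantive input is the Jankov--von Neumann selection theorem itself. Its proof represents \(D\) as a continuous image of the Baire space \(\mathbb{N}^{\mathbb{N}}\) and picks the lexicographically least preimage; checking that this choice is analytically measurable is the delicate point. I would cite this step rather than reprove it.
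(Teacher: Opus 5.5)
Your argument is correct. The paper does not prove this proposition; it only cites Bertsekas--Shreve, Proposition~7.50, and you have reconstructed essentially that proof: analytic projections of the Borel sublevel sets of \(\varphi\), Jankov--von Neumann selectors on those sets, and pasting along a countable analytically measurable partition by the level of \(\tilde{\varphi}\). You are also right that the codomain of \(s\) should be \(\mathcal{Y}\).
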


\section{Duality, uniqueness, and existence}
\label{sec-tsb}
We recall the transport-relaxed Schrödinger bridge problem.
\begin{prob}
	\label{prob-s}
	Let \(c_{\mathcal{X}}:\mathcal{X}\times \mathcal{X}\to \mathbb{R}\) and \(c_{\mathcal{Y}}:\mathcal{Y}\times \mathcal{Y}\to \mathbb{R}\) be two Borel measurable functions.
	For any \(\mu\in \mathscr{P}(\mathcal{X})\), \(\nu\in \mathscr{P}(\mathcal{Y})\), and \(\gamma\in \mathscr{P}(\mathcal{X}\times \mathcal{Y})\), the transport-relaxed Schrödinger bridge is given by
	\begin{equation}
		\label{eqn-primal}
		\inf_{\pi\in \mathscr{P}(\mathcal{X}\times \mathcal{Y})}I(\pi):=\inf_{\pi\in \mathscr{P}(\mathcal{X}\times \mathcal{Y})}  \{\ot(\mu,\pi_{x})+\ot(\nu,\pi_{y}) +H(\pi | \gamma)\},
	\end{equation}
	where \(\ot\) denotes the optimal transport associated to \(c_{\mathcal{X}}\) and \(c_{\mathcal{Y}}\).
\end{prob}

\begin{rmk}
	By taking \(c_{\mathcal{X}}(x,x')=+\infty \1_{\{x\neq x'\}}\) and \(c_{\mathcal{Y}}(y,y')=+\infty \1_{\{y\neq y'\}}\),  we retrieve the classical Schrödinger bridge problem.
	By taking \(c_{\mathcal{X}}(x,x')=+\infty \1_{\{x\neq x'\}}\) and replacing \(\ot(\cdot,\nu)\)  with \(H(\cdot|\nu)\), we recover the relaxed Schrödinger bridge problem studied in \citet{garg24soft}.
\end{rmk}
\begin{rmk}
	If the reference measure \(\gamma(\md x, \md y)=\gamma_{x}(\md x)\otimes\gamma_{y}(\md y)\), then the relaxed Schrödinger bridge is reduced to
	\begin{equation*}
		\pi^{*} = \arginf_{\pi\in \Pi(\mu',\nu')}H(\pi |\gamma),
	\end{equation*}
	where \(\mu'=\arginf\{\ot(\mu,\cdot)+H(\cdot|\gamma_{x})\}\) and \(\nu'=\arginf\{\ot(\nu,\cdot)+H(\cdot|\gamma_{y})\}\).
\end{rmk}

\begin{asmp}
	\label{asmp-dual}
	We assume that there exist \(c\)-convex \(f^{*}\in L^{0}(\mathcal{X})\), \(g^{*}\in L^{0}(\mathcal{Y})\), and \(\pi^{*}\in \Pi(\mu,\nu)\) such that
	\begin{enumerate}[(i)]
		\item \(\md \pi^{*}= \exp(f^{*}\oplus g^{*})\md \gamma\);
		\item there exists \(\eta\in \Pi( \pi^{*}_{x},\mu)\) such that \(\supp(\eta)\subseteq \partial^{c} f^{*}\);
		\item there exists \(\lambda\in \Pi( \pi^{*}_{y},\nu)\) such that \(\supp(\lambda)\subseteq \partial^{c} g^{*}\).
	\end{enumerate}
\end{asmp}

We state our duality result.

\begin{thm}[Duality]
	\label{thm-dual}
	We write the dual problem as
	\begin{equation}
		\label{eqn-dual}
		\sup_{f\in L^{0}(\mathcal{X}),g\in L^{0}(\mathcal{Y})}J(f,g):=\sup_{f\in L^{0}(\mathcal{X}),g\in L^{0}(\mathcal{Y})} \{E_{\mu}[f^{c}] +\E_{\nu}[g^{c}] -\E_{\gamma}[\exp(f\oplus g)]+1\},
	\end{equation}
	where \(f^{c}(x)=\inf_{x'\in \mathcal{X}}\{f(x')+c_{\mathcal{X}}(x,x')\}\) and \(g^{c}(y)=\inf_{y'\in \mathcal{Y}}\{f(y')+c_{\mathcal{Y}}(y,y')\}\).

	Under Assumption \ref{asmp-dual}, the strong duality holds, i.e., \(\inf_{\pi}I(\pi)=\sup_{f,g}J(f,g)\).
	Moreover, \(\pi^{*}\) and \((f^{*},g^{*})\) in Assumption \ref{asmp-dual} are the primal and dual optimizers respectively.
\end{thm}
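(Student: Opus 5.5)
The plan is to prove weak duality \(\inf_{\pi}I(\pi)\geq\sup_{f,g}J(f,g)\) by a pointwise argument, and then show that the pair in Assumption \ref{asmp-dual} attains equality: \(I(\pi^{*})=J(f^{*},g^{*})\). Together these give strong duality and identify \(\pi^{*}\) and \((f^{*},g^{*})\) as optimizers. Throughout, I read \(\pi^{*}\) as a probability measure on \(\mathcal{X}\times\mathcal{Y}\) with marginals \(\pi^{*}_{x},\pi^{*}_{y}\). These marginals enter through the couplings \(\eta\in\Pi(\pi^{*}_{x},\mu)\) and \(\lambda\in\Pi(\pi^{*}_{y},\nu)\).

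\emph{Weak duality.} Fix \(\pi\) with \(I(\pi)<\infty\) and fix \((f,g)\).

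Step 1 (transport terms). For any \(\kappa\in\Pi(\pi_{x},\mu)\), the definition \(f^{c}(x)=\inf_{x'}\{f(x')+c_{\mathcal{X}}(x,x')\}\) gives \(f^{c}(x)\leq f(x')+c_{\mathcal{X}}(x,x')\) for \(\kappa\)-a.e.\ \((x',x)\). Integrating and taking the infimum over \(\kappa\) yields
\[
\E_{\mu}[f^{c}]\leq \E_{\pi_{x}}[f]+\ot(\mu,\pi_{x}),
\]
and similarly \(\E_{\nu}[g^{c}]\leq \E_{\pi_{y}}[g]+\ot(\nu,\pi_{y})\).

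Step 2 (entropy term). Apply Lemma \ref{lem-entropy} to \(\pi\), viewed as a coupling of its own marginals \(\pi_{x},\pi_{y}\). This gives
\[
H(\pi|\gamma)\geq \E_{\pi_{x}}[f]+\E_{\pi_{y}}[g]-\E_{\gamma}[\exp(f\oplus g)]+1.
\]

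Step 3 (summing). Adding the three inequalities makes the terms \(\E_{\pi_{x}}[f]\) and \(\E_{\pi_{y}}[g]\) cancel, which gives \(I(\pi)\geq J(f,g)\).

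Two technical points must be handled here. First, \(f^{c}\) is only an infimum of Borel functions, hence lower semianalytic and so universally measurable; its \(\mu\)-integral is defined as in the measurable selection subsection. Second, there are the conventions around \(\infty-\infty\). I would handle both by first truncating \(f\wedge k\) and \(g\wedge k\), as in the proof of Lemma \ref{lem-entropy}, so that all terms are finite. Truncating \(f\) only lowers \(f^{c}\) and raises nothing else in the wrong direction. I would then pass to the limit by monotone convergence. If \(\E_{\pi_{x}}[f]\) is \(+\infty\) or undefined, the convention \(\infty-\infty=-\infty\) makes the inequality trivial.

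\emph{Attainment.}

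Entropy part: by (i), \(H(\pi^{*}|\gamma)=\E_{\pi^{*}}[f^{*}\oplus g^{*}]=\E_{\pi^{*}_{x}}[f^{*}]+\E_{\pi^{*}_{y}}[g^{*}]\) and \(\E_{\gamma}[\exp(f^{*}\oplus g^{*})]=\pi^{*}(\mathcal{X}\times\mathcal{Y})=1\). So the Lemma \ref{lem-entropy} inequality is an equality.

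Transport part: I claim that \(\supp(\eta)\subseteq\partial^{c}f^{*}\), together with the \(c\)-convexity of \(f^{*}\), gives \(f^{*c}(x)=f^{*}(x')+c_{\mathcal{X}}(x,x')\) for \(\eta\)-a.e.\ \((x',x)\). Equivalently, \(x'\) attains the infimum defining \(f^{*c}(x)\). Integrating against \(\eta\) then gives
\[
\E_{\mu}[f^{*c}]=\E_{\pi^{*}_{x}}[f^{*}]+\E_{\eta}[c_{\mathcal{X}}]\geq \E_{\pi^{*}_{x}}[f^{*}]+\ot(\mu,\pi^{*}_{x}).
\]
Combined with Step 1, this is an equality and \(\eta\) is optimal, consistent with Theorem \ref{thm-ot}. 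The same argument applies to \(g^{*}\) and \(\lambda\).

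Summing the entropy and transport equalities gives \(J(f^{*},g^{*})=I(\pi^{*})\). With weak duality, this closes the duality gap and shows both optimality claims.

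The main obstacle I expect is this transport-part claim. It requires reconciling the sign conventions: the definition of \(c\)-convex uses \(-\inf\), the supdifferential inequality \(f(x')\leq f(x)+c(x',y)-c(x,y)\) is stated there, and \(f^{c}\) is defined via an infimum. One has to check that membership in \(\partial^{c}f^{*}\) is precisely the statement that \(x'\mapsto f^{*}(x')+c_{\mathcal{X}}(x,x')\) is minimized at the given point, possibly after replacing \(c\) by its transpose or \(f^{*}\) by \(-f^{*}\). Integrability of \(f^{*}\) under \(\pi^{*}_{x}\) is also needed. It follows from \(H(\pi^{*}|\gamma)<\infty\) once the sum \(f^{*}\oplus g^{*}\) is split, which may require a normalization constant shifted between \(f^{*}\) and \(g^{*}\).
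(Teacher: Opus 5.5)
Your proposal is correct and follows the paper's route. Weak duality comes from Lemma~\ref{lem-entropy} applied to $\pi\in\Pi(\pi_{x},\pi_{y})$ together with the bound $\E_{\mu}[f^{c}]\le \E_{\pi_{x}}[f]+\ot(\mu,\pi_{x})$, and strong duality comes from checking equality at the triple $(\pi^{*},f^{*},g^{*})$ of Assumption~\ref{asmp-dual}.

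The differences are only streamlining. You rightly observe that only this ``$\geq$'' half of the paper's identity $\inf_{\mu'}\{\ot(\mu,\mu')+\E_{\mu'}[f]\}=\E_{\mu}[f^{c}]$ is needed, so the analytic selection step can be skipped. You also use the trivial bound $\E_{\eta}[c_{\mathcal{X}}]\ge\ot(\mu,\pi^{*}_{x})$ in place of the appeal to Theorem~\ref{thm-ot}. The sign-convention mismatch in the definition of $\partial^{c}f$ and the integrability issues you flag are real, but the paper's proof passes over them silently as well.
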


\begin{proof}
	Applying Lemma \ref{lem-entropy} to \(H(\pi|\gamma)\), we derive
	\begin{align*}
		I & \geq \inf_{(\mu',\nu')\in \mathscr{P}(\mathcal{X}) \times\mathscr{P}(\mathcal{Y})}\sup_{f,g\in L^{0}}\{(\ot(\mu,\mu')+\E_{\mu'}[f])+(\ot(\nu,\nu')+\E_{\nu'}[g])-\E_{\gamma}[\exp(f\oplus g)]\}+1   \\
		  & \geq \sup_{f,g\in L^{0}} \inf_{(\mu',\nu')\in \mathscr{P}(\mathcal{X}) \times\mathscr{P}(\mathcal{Y})}\{(\ot(\mu,\mu')+\E_{\mu'}[f])+(\ot(\nu,\nu')+\E_{\nu'}[g])-\E_{\gamma}[\exp(f\oplus g)]\}+1.
	\end{align*}
	To obtain the weak duality
	\(
	I \geq J
	\),
	it suffices to show that
	\begin{equation*}
		\inf_{\mu'\in \mathscr{P}(\mathcal{X})}\{\ot(\mu,\mu')+\E_{\mu'}[f]\}=\E_{\mu}[f^{c}] \text{ and } \inf_{\nu'\in \mathscr{P}(\mathcal{Y})}\{\ot(\nu,\nu')+\E_{\nu'}[g]\}=\E_{\mu}[g^{c}].
	\end{equation*}
	Let \(\varphi(x,x')=f(x')+c_{\mathcal{X}}(x,x')\).	By Proposition \ref{prop-sel}, for any \(\varepsilon\) there exists \(s:\mathcal{X}\to \mathcal{X}\) such that
	\begin{equation*}
		\varphi(x,s(x))\leq\begin{cases}
			f^{c}(x)+\varepsilon & \text{if}\quad f^{c}(x)>-\infty,  \\
			-1/\varepsilon \quad & \text{if} \quad f^{c}(x)=-\infty.
		\end{cases}
	\end{equation*}
	By taking \(\hat{\mu}=s_{\#}\mu\), we have
	\begin{equation*}
		\inf_{\mu'\in \mathscr{P}(\mathcal{X})}\{\ot(\mu,\mu')+\E_{\mu'}[f]\}\leq \ot(\mu,\hat{\mu})+\E_{\hat{\mu}}[f]=\E_{\mu}[\phi(X,s(X))]\leq \E_{\mu}\Bigl[\min\Bigl\{f^{c},-\frac{1}{\varepsilon}\Bigr\}\Bigr]+\varepsilon.
	\end{equation*}
	As \(\varepsilon\) goes to 0, we obtain
	\begin{equation*}
		\inf_{\mu'\in \mathscr{P}(\mathcal{X})}\{\ot(\mu,\mu')+\E_{\mu'}[f]\}\leq \E_{\mu}[f^{c}].
	\end{equation*}
	For the reverse direction, we write
	\begin{equation*}
		\inf_{\mu'\in \mathscr{P}(\mathcal{X})}\{\ot(\mu,\mu')+\E_{\mu'}[f]\}=\inf_{\Pi(\mu,*)}\E_{\pi}[f(X')+c_{\mathcal{X}}(X,X')]\geq \inf_{\Pi(\mu,*)}[f^{c}(X)]=\E_{\mu}[f^{c}].\end{equation*}
	Similarly, we obtain \(\inf_{\nu'\in \mathscr{P}(\mathcal{Y})}\{\ot(\nu,\nu')+\E_{\nu'}[g]\}=\E_{\mu}[g^{c}]\).

	Let \(\pi^{*}\) and \((f^{*},g^{*})\) be as in Assumption \ref{asmp-dual}.
	On the other hand, by Assumption \ref{asmp-dual}(ii) and Theorem \ref{thm-ot} we have
	\begin{equation*}
		\ot(\mu,\pi^{*}_{x})= \E_{\eta}[c_{\mathcal{X}}(X,X')] =\E_{\mu}[(f^{*})^{c}]-\E_{\pi^{*}_{x}}[f^{*}].
	\end{equation*}
	Similarly, we have
	\begin{equation*}
		\ot(\nu,\pi^{*}_{y})=\E_{\nu}[(g^{*})^{c}]-\E_{\pi^{*}_{y}}[g^{*}].
	\end{equation*}
	Hence, we deduce
	\begin{align*}
		J & \geq E_{\mu}[(f^{*})^{c}] +\E_{\nu}[(g^{*})^{c}]                                               \\
		  & = \ot(\mu, \pi^{*}_{x}) +\ot(\nu,\pi^{*}_{y}) +\E_{\pi^{*}_{x}}[f^{*}]+\E_{\pi^{*}_{y}}[g^{*}] \\
		  & =  \ot(\mu, \pi^{*}_{x}) +\ot(\nu,\pi^{*}_{y}) + H(\pi^{*}|\gamma) \geq I.
	\end{align*}
	Moreover, we notice that the primal and dual optimizers are given by \(\pi^{*}\) and \((f^{*},g^{*})\).
\end{proof}

We discuss the uniqueness and existence for the primal problem.
\begin{prop}
	\label{prop-primal}
	We have	\(\pi\mapsto I(\pi)\) in the primal problem \eqref{eqn-primal} is strictly convex.
	If we further assume \(c_{\mathcal{X}}\) and \(c_{\mathcal{Y}}\) are lower semi-continuous and bounded from below then \(\pi\mapsto I(\pi)\) is lower semi-continuous w.r.t the total variation topology.
	In particular, if there exists \(\pi\) such that \(I(\pi)<\infty\), then the primal problem exists a unique optimizer.
\end{prop}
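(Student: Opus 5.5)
Write \(I=I_{1}+I_{2}\), where \(I_{1}(\pi)=\ot(\mu,\pi_{x})+\ot(\nu,\pi_{y})\) and \(I_{2}(\pi)=H(\pi|\gamma)\); each property is checked on the two pieces separately.

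\emph{Convexity.} The map \(\pi\mapsto \ot(\mu,\pi_{x})\) is convex. Indeed, \(\pi\mapsto\pi_{x}\) is linear, and \(\mu'\mapsto\ot(\mu,\mu')\) is convex: given near-optimal couplings \(\eta_{k}\in\Pi(\mu,\mu'_{k})\) for \(k=1,2\), the mixture \(t\eta_{1}+(1-t)\eta_{2}\) belongs to \(\Pi(\mu,t\mu'_{1}+(1-t)\mu'_{2})\) and its cost is the corresponding convex combination. The same argument applies to \(\ot(\nu,\pi_{y})\). Strict convexity comes from the entropy term. On \(\{H(\cdot|\gamma)<\infty\}\) the map \(\pi\mapsto H(\pi|\gamma)=\E_{\gamma}[h(\md\pi/\md\gamma)]\), with \(h(s)=s\log s\), is strictly convex because \(h\) is strictly convex. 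If \(\pi^{1}\neq\pi^{2}\), their densities differ on a set of positive \(\gamma\)-measure, so the inequality is strict. Strict convexity is understood on the effective domain \(\{I<\infty\}\); with the convention \(\infty-\infty=-\infty\) this is harmless, since \(I_{1}\) takes values in \((-\infty,\infty]\) once the costs are bounded below, and in general one restricts to points where \(I\) is finite.

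\emph{Lower semicontinuity in total variation.} The relative entropy is lower semicontinuous under weak convergence, by the Donsker–Varadhan variational formula \(H(\pi|\gamma)=\sup_{\varphi\in C_{b}}\{\E_{\pi}[\varphi]-\log\E_{\gamma}[e^{\varphi}]\}\). Total variation convergence is stronger than weak convergence, so \(H\) is also lower semicontinuous in total variation. For the transport terms, TV convergence \(\pi^{k}\to\pi\) gives TV, hence weak, convergence of the marginals \(\pi^{k}_{x}\to\pi_{x}\). With \(c_{\mathcal{X}}\) lower semicontinuous and bounded below, \(\mu'\mapsto\ot(\mu,\mu')\) is weakly lower semicontinuous. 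To see this, take near-optimal \(\eta_{k}\in\Pi(\mu,\pi^{k}_{x})\) along a subsequence realizing the \(\liminf\). This family is tight, since both marginal families are tight. Pass to a weak limit \(\eta\in\Pi(\mu,\pi_{x})\) and apply the portmanteau inequality \(\liminf\E_{\eta_{k}}[c]\ge\E_{\eta}[c]\), which holds for lower semicontinuous \(c\) bounded below (approximate \(c\) from below by bounded continuous functions). Summing the lower semicontinuous pieces gives lower semicontinuity of \(I\).

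\emph{Existence and uniqueness.} Uniqueness follows directly from strict convexity: two distinct minimizers with finite value would have a midpoint with strictly smaller value. Existence is the main obstacle, because TV-lower semicontinuity alone does not provide compactness. I would use the entropy instead. Let \(\pi^{k}\) be a minimizing sequence with \(I(\pi^{k})\le C\). Since \(I_{1}\) is bounded below (the costs are bounded below), \(H(\pi^{k}|\gamma)\) is uniformly bounded. Hence the densities \(\md\pi^{k}/\md\gamma\) are uniformly integrable in \(L^{1}(\gamma)\) by de la Vallée-Poussin, and by Dunford–Pettis a subsequence converges weakly in \(L^{1}(\gamma)\) to the density of some probability measure \(\pi\). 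This is convergence against bounded measurable test functions, not TV convergence. To close the argument I would invoke Mazur's lemma: convex combinations of the tail of the sequence converge strongly in \(L^{1}(\gamma)\), that is, in total variation, to \(\pi\). By convexity of \(I\), these convex combinations are still minimizing. Then TV-lower semicontinuity gives \(I(\pi)\le\liminf=\inf I\), so \(\pi\) is a minimizer.
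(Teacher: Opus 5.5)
Your proposal is correct and follows the same route as the paper. Both arguments get strict convexity from convexity of the transport terms plus strict convexity of the entropy, total-variation lower semicontinuity from the variational formula for \(H(\cdot|\gamma)\) and the Portmanteau argument for \(\ot(\mu,\cdot)\), and existence from convex combinations of a bounded-entropy minimizing sequence that converge in total variation. The one difference is that you prove this last compactness step yourself (de la Vall\'ee-Poussin, Dunford--Pettis, Mazur), whereas the paper cites Lemma 1.8 of Nutz's lecture notes for it.
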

\begin{proof}
	We first notice that \(\ot(\mu,\cdot)\), \(\ot(\nu,\cdot)\) are convex and \(H(\cdot|\gamma)\) is strictly convex.
	This leads to the strict convexity of \(I\).
	We note the variation formulation \citep[Lemma 1.3]{nutz2021introduction}
	\begin{equation*}
		H(\pi|\gamma)=\sup_{\varphi \in L^{0}(\mathcal{X}\times \mathcal{Y}), \text{ bounded}}\{\E_{\pi}[\varphi]-\log(\E_{\gamma}[\exp(\varphi)])\},
	\end{equation*}
	which yields the lower semi-continuity of \(H(\cdot|\gamma)\) with respect to the total variation topology.
	Moreover, let \(\mu_{n}'\to \mu'\) be a convergent sequence in total variation.
	For any sequence \(\pi_{n}\in \Pi(\mu,\mu_{n}')\) there exists \(\pi\in \Pi(\mu,\mu')\) such that \(\pi_{n}\) converges to \(\pi\) in weak topology.
	As \(c_{\mathcal{X}}\) is lower semi-continuous and bounded from below, by Portmanteau theorem we obtain
	\begin{equation*}
		\E_{\pi}[c]\leq \liminf_{n\to\infty}\E_{\pi_{n}}[c].
	\end{equation*}
	This implies \(\ot(\mu,\cdot)\) is lower semi-continuous, and the same holds for \(\ot(\nu,\cdot)\).
	Therefore, we deduce \(I\) is lower semi-continuous under the total variation topology.

	If now there exists \(\pi\) such that \(I(\pi)<\infty\), then we take a minimizing sequence \(\{\pi_{n}\}_{n\geq 1}\).
	In particular, we can take \(\{\pi_{n}\}_{n\geq 1}\) such that \(H(\pi_{n}|\gamma)\leq I(\pi)- \inf c_{\mathcal{X}}- \inf c_{\mathcal{Y}}<\infty.\)
	By \citet[Lemma 1.8]{nutz2021introduction}, there exists \(\pi_{n}'\in \operatorname{conv}\{\pi_{n},\pi_{n+1},\dots\}\) such that \(\pi_{n}\) converges to a limit \(\pi^{*}\) in total variation.
	It follows, from the convexity and lower semi-continuity of \(I\), that
	\begin{equation*}
		I(\pi^{*})\leq \liminf_{n\to\infty} I(\pi_{n}')\leq \liminf_{n\to\infty} \sup_{m\geq n}I(\pi_{m})=\lim_{n\to \infty}I(\pi_{n}).
	\end{equation*}
	Hence, \(\pi^{*}\) is a primal optimizer.
	The uniqueness follows directly from the strict convexity.
\end{proof}

In classical optimal transport, the uniqueness and existence of optimal potentials is in general open and greatly depends on the geometry of the underlying space and the choice of the cost function.
Henceforth, we focus on the Euclidean space \(\mathcal{X}=\mathcal{Y}=\mathbb{R}^{d}\) and the quadratic cost \(c_{\mathcal{X}}=c_{\mathcal{Y}}=\frac{1}{2}\|\cdot-\cdot\|^{2}\).
Our aim is to establish the uniqueness and existence of potentials for the relaxed Schrödinger bridge.

Motivated by data-driven applications, we consider the following semi-discrete setup.
\begin{asmp}
	\label{asmp-semi}
	Let \(\mu=\sum_{i=1}^{n}a_{i}\delta_{x_{i}}\) and \(\nu=\sum_{j=1}^{m}b_{j}\delta_{y_{j}}\) be discrete probability measures with \(\sum_{i=1}^{n}a_{i}=\sum_{j=1}^{m}b_{j}=1\), and \(\gamma\) is absolutely continuous with respect to the Lebesgue  measure.
\end{asmp}

For \(\alpha\in \mathbb{R}^{n}\) and \(\beta\in \mathbb{R}^{m}\), we write \(f(x,\alpha)=\sup_{1\leq i \leq n}\{\langle x, x_{i} \rangle+\alpha_{i}\}\) and \(g(y,\beta)=\sup_{1\leq j \leq m}\{\langle y, y_{j} \rangle+\beta_{j}\}\).
We also write
\begin{equation*}
	\tilde{\gamma}(\md x,\md y)= \exp\Bigl(-\frac{1}{2}\|x\|^{2}-\frac{1}{2}\|y\|^{2}\Bigr) \gamma(\md x, \md y).
\end{equation*}
In the next theorem, we show in a semi-discrete setting the dual problem \eqref{eqn-dual} is equivalent to a \emph{finite-dimensional} concave optimization problem and the strong duality holds.
\begin{thm}
	\label{thm-semi}
	Let Assumption \ref{asmp-semi} hold.
	Recall that we write
	\begin{equation*}
		U(\alpha,\beta) :=\sum_{i=1}^{n}a_{i}(\alpha_{i}+\frac{1}{2}\|x_{i}\|^{2})+\sum_{j=1}^{m}b_{j}(\beta_{j}+\frac{1}{2}\|y_{j}\|^{2})-\int\exp(f(x,\alpha)+g(y,\beta))\md \tilde{\gamma}+1.
	\end{equation*}
	Then we have the strong duality:
	\begin{equation*}
		\sup_{(\alpha,\beta)\in \mathbb{R}^{n+m}}U(\alpha,\beta)=\sup_{f,g\in L^{0}(\mathbb{R}^{d})}J(f,g)=\inf_{\pi\in \mathscr{P}(\mathbb{R}^{d}\times \mathbb{R}^{d})}I(\pi).
	\end{equation*}
	Moreover, the dual problem has a unique optimizer \((\alpha^{*},\beta^{*})\) in \(\mathbb{R}^{n+m}_{\oplus}\) and the primal optimizer is given by
	\begin{equation*}
		\md \pi^{*}=\exp(f(x,\alpha^{*})+g(y,\beta^{*})) \md \tilde{\gamma}.
	\end{equation*}
\end{thm}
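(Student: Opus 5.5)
The plan is to reduce the infinite-dimensional dual $J$ to $U$, then produce a maximizer of $U$ and check that it satisfies Assumption \ref{asmp-dual}, so that Theorem \ref{thm-dual} delivers strong duality and identifies the primal optimizer.

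\textbf{Reduction $\sup J=\sup U$.} With $c=\frac12\|\cdot-\cdot\|^2$, write $f(x)=\tilde f(x)-\frac12\|x\|^2$. Then
\begin{equation*}
f^{c}(x_i)=\tfrac12\|x_i\|^2-\sup_{x'}\{\langle x',x_i\rangle-\tilde f(x')\},
\end{equation*}
and $\E_\mu[f^c]$ depends on $f$ only through the $n$ numbers $\alpha_i:=-\tilde f^{*}(x_i)$, where $\tilde f^*$ is the Legendre transform. For fixed $\alpha$, we want to make $\E_\gamma[\exp(f\oplus g)]$ as small as possible. The constraint $\tilde f^*(x_i)\le-\alpha_i$ is equivalent to $\tilde f(x')\ge\langle x',x_i\rangle+\alpha_i$ for every $i$. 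So the pointwise smallest admissible $\tilde f$ is $f(\cdot,\alpha)$, and replacing $\tilde f$ by it does not decrease $J$. This yields $\sup J\le\sup U$.

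The reverse inequality comes from plugging in $f=f(\cdot,\alpha)-\frac12\|\cdot\|^2$. Here $(f(\cdot,\alpha))^*(x_i)\le-\alpha_i$, which gives $J\ge U$; in fact we get equality at the optimum. Measurability is harmless since $f(\cdot,\alpha)$ is continuous.

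\textbf{Existence and uniqueness for $U$.} The map $U$ is concave, since $\exp$ of a convex function is convex, and it is invariant under $\sim_\oplus$. It is $C^1$: $\gamma$ is absolutely continuous, so the boundaries of the Laguerre cells are null. Its gradient is the formula in the introduction, with the mass of $A_i(\alpha)\times\mathbb R^d$ under $e^{f+g}\tilde\gamma$.

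For coercivity on $\mathbb R^{n+m}_\oplus$, I would normalize, e.g.\ $\max_i\alpha_i=0$. Then I would show $U\to-\infty$ when some $\alpha_i\to-\infty$, or when $\overline{\beta}\to\infty$. The linear part only grows like $\sum a_i\alpha_i+\sum b_j\beta_j$, while the integral grows like $e^{\overline\alpha+\overline\beta}$ times a positive constant. One has to be careful about the integrability of $\exp(\langle x,x_i\rangle-\frac12\|x\|^2)$ against $\gamma$; I expect we need the Gaussian factor to make $\int e^{f+g}d\tilde\gamma<\infty$, which holds because $e^{\langle x,x_i\rangle-\|x\|^2/2}$ is bounded.

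Directions in which the linear part decreases are handled by $a_i,b_j>0$ together with $\sum a_i=\sum b_j$. I expect this step, coercivity modulo $\sim_\oplus$ and in particular ruling out degenerate directions where some $\alpha_i\to-\infty$ so that cell $A_i$ becomes empty, to be the main obstacle. I'd use that the gradient component is $a_i-(\text{mass of }A_i)$. Once $A_i(\alpha)$ is empty this component equals $a_i>0$, so increasing $\alpha_i$ strictly improves $U$. This lets us restrict to a compact region in which all cells are nonempty, and there a maximizer exists.

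For uniqueness modulo $\oplus$, suppose two maximizers have $e^{f(\cdot,\alpha)+g(\cdot,\beta)}$ differing on a $\gamma$-positive set. Then strict convexity of $\exp$ makes the midpoint strictly better, a contradiction. So $f(x,\alpha)+g(y,\beta)$ is determined $\gamma$-a.e. If $\gamma$ has full support, or at least charges every cell, this pins down $\alpha_i+\beta_j$; otherwise I'd use the first-order conditions to fix the values.

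\textbf{Verifying Assumption \ref{asmp-dual}.} Set $d\pi^*=e^{f(x,\alpha^*)+g(y,\beta^*)}d\tilde\gamma$. The condition $\nabla U=0$ says that $\pi^*$ is a probability measure with $\pi^*_x(A_i)=a_i$ and $\pi^*_y(B_j)=b_j$. Define $\eta$ by sending $A_i$ to $x_i$ and $\lambda$ likewise. Each $x\in A_i$ satisfies $x_i\in\partial^c f^*(x)$, by the Laguerre-cell characterization of the supdifferential of $\frac12\|x\|^2-f(x,\alpha)$. This gives (ii) and (iii), and (i) holds by construction. Theorem \ref{thm-dual} then yields strong duality together with the stated primal optimizer, whose uniqueness follows from Proposition \ref{prop-primal}.
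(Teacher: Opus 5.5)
Your architecture is the paper's. You show that the upper level sets of $U$ are compact modulo $\sim_\oplus$, verify Assumption \ref{asmp-dual} from $\nabla U(\alpha^*,\beta^*)=0$ with $\eta,\lambda$ given by the Laguerre maps, and then apply Theorem \ref{thm-dual}.

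Two local steps differ.
\begin{itemize}
\item You prove $\sup J=\sup U$ directly, via $f^c(x_i)=\frac12\|x_i\|^2-\tilde f^*(x_i)$ and the minimality of $f(\cdot,\alpha)$ among all $\tilde f$ with $\tilde f^*(x_i)\le-\alpha_i$. This is correct and does not depend on any optimizer. The paper obtains the equality only a posteriori, from $J(f^*,g^*)=U(\alpha^*,\beta^*)$ once Theorem \ref{thm-dual} certifies $(f^*,g^*)$. Your version explains why max-affine potentials suffice; the paper's is shorter.
\item You derive \eqref{eqn-density} from strict convexity of $\exp$ at the midpoint of two maximizers. The paper instead uses uniqueness of the primal optimizer (Proposition \ref{prop-primal}). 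Both rest on the same strict convexity; yours stays in finite dimensions.
\end{itemize}

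The coercivity obstacle you anticipate is not there. The bound $U\le C+\sum_i a_i\alpha_i+\sum_j b_j\beta_j$, with $\min_i a_i,\min_j b_j>0$, combined with the exponential bound on $\overline\alpha+\overline\beta$, already traps the upper level sets in $\{-R\le\underline\alpha+\underline\beta\le\overline\alpha+\overline\beta\le R\}$. This is exactly the paper's Step 1, and no argument about emptying cells is needed.

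The genuine gap is the last step of uniqueness, and your fallback (``otherwise use the first-order conditions'') cannot close it. Those conditions give $\gamma(A_i(\alpha^*)\times\mathbb{R}^d)>0$ and $\gamma(\mathbb{R}^d\times B_j(\beta^*))>0$, but not $\gamma(A_i(\alpha^*)\times B_j(\beta^*))>0$. Without the latter, uniqueness modulo $\sim_\oplus$ is false. Here is a counterexample.
\begin{itemize}
\item Take $d=1$, $x_1=y_1=1$, $x_2=y_2=-1$, $a_i=b_j=\frac12$, and $\gamma$ uniform on $[L,L+1]^2\cup[-L-1,-L]^2$.
\item For $|s|<L$, set $(\alpha,\beta)=(s,-s,k-s,k+s)$. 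Then $A_1=[-s,\infty)$ and $B_1=[s,\infty)$.
\item On $\supp\gamma$ one has $f+g=\pm(x+y)+k$ regardless of $s$, and the linear part of $U$ is $k$ plus a constant.
\item Choose $k$ with $e^k\int_{[L,L+1]^2}e^{x+y}\md\tilde\gamma=\frac12$. By the symmetry $(x,y)\mapsto(-x,-y)$, this gives $\nabla U=0$ at every such point, so all of them are maximizers by concavity.
\item Yet $\alpha_1+\beta_2=k+2s$ varies with $s$, while the primal optimizer is the same for all $s$.
\end{itemize}

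The paper's Step 3 carries the same tacit assumption: ``taking derivatives'' of \eqref{eqn-density} requires that identity to hold on open sets. So the statement needs an extra hypothesis. Two options are a positive density as in Assumption \ref{asmp-blowup}, or $\gamma(A_i(\alpha^*)\times B_j(\beta^*))>0$ for all $i,j$. The latter is the only sense in which your ``charges every cell'' works.

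Under that hypothesis your argument does close. On a positive-measure subset of $A_i(\alpha^*)\times B_j(\beta^*)$, the piecewise-affine function $f(x,\alpha)+g(y,\beta)$ coincides with a single piece $\langle x,x_k\rangle+\langle y,y_l\rangle+\alpha_k+\beta_l$ on a set of positive Lebesgue measure. For distinct atoms this forces $(k,l)=(i,j)$ and $\alpha_i+\beta_j=\alpha_i^*+\beta_j^*$.
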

\begin{rmk}
	We remark that  the dual optimizer \((\alpha^{*},\beta^{*})\) has a trivial multiplicity as for any \(r\in \mathbb{R}\), the pair \((\alpha,\beta)=(\alpha^{*},\beta^{*})+r(\mathbf{1},-\mathbf{1})\) is also an optimizer.
	Let \(U_{\oplus}:\mathbb{R}^{n+m}_{\oplus}\to \mathbb{R}\) be the map induced by \(U\) on the quotient space.
	We stress that even after quotienting out this trivial multiplicity,  \(U_{\oplus}\) is not a strictly concave function.
	In particular, the uniqueness of the dual optimizer in \(\mathbb{R}^{n+m}_{\oplus}\) does not follow from the strict concavity; instead, it is a consequence of the strict convexity of the relative entropy in the primal problem.
\end{rmk}
\begin{proof}
	\emph{Step 1.} We first establish the  existence of an optimizer for the dual problem.
	As \(U\) is continuous, it suffices to show that the projection of the upper level set
	\begin{equation*}
		\proj_{\oplus}(Z_{K}):=\proj_{\oplus}(\{(\alpha,\beta):U(\alpha,\beta)\geq K \})\subseteq \mathbb{R}^{n+m}_{\oplus}
	\end{equation*}
	is a non-empty compact set for sufficiently large \(K\).
	We note that
	\begin{equation*}
		U(\alpha,\beta)  \leq 1+ \sum_{i=1}^{n}\frac{1}{2}a_{i}\|x_{i}\|^{2}+\sum_{j=1}^{m}\frac{1}{2}b_{j}\|y_{j}\|^{2}+\overline{\alpha}+\overline{\beta}-\exp(\overline{\alpha}+\overline{\beta})\int\exp(\inf_{1\leq i\leq n}\{\langle x, x_{i} \rangle\}+\inf_{1\leq j\leq m}\{\langle y, y_{j} \rangle\})\md \tilde{\gamma}.	\end{equation*}
	Hence, we deduce \(\sup_{(\alpha,\beta)\in Z_{K}} (\overline{\alpha}+\overline{\beta})<\infty\).
	Moreover,  the estimate
	\begin{align*}
		U(\alpha,\beta) & \leq 1+ \sum_{i=1}^{n}\frac{1}{2}a_{i}\|x_{i}\|^{2}+\sum_{j=1}^{m}\frac{1}{2}b_{j}\|y_{j}\|^{2}+\sum_{i=1}a_{i}\alpha_{i}+\sum_{j=1}b_{j}\beta_{j}                                                                         \\
		                & \leq 1+ \sum_{i=1}^{n}\frac{1}{2}a_{i}\|x_{i}\|^{2}+\sum_{j=1}^{m}\frac{1}{2}b_{j}\|y_{j}\|^{2}+ \underline{\alpha}\min_{1\leq i\leq n}a_{i}+\underline{\beta}\min_{1\leq j\leq m}b_{j}+\overline{\alpha}+\overline{\beta}
	\end{align*}
	yields that \(\inf_{(\alpha,\beta)\in Z_{K}} (\underline{\alpha}+\underline{\beta})>-\infty\).
	Therefore, we obtain for sufficiently large \(R\)
	\begin{equation*}
		Z_{K}\subseteq \{(\alpha,\beta):-R\leq \underline{\alpha}+\underline{\beta}\leq  \overline{\alpha}+\overline{\beta}\leq R\},
	\end{equation*}
	from which the compactness of \(\proj_{\oplus}Z_{K}\) follows immediately.

	\emph{Step 2.}
	Let \((\alpha^{*},\beta^{*})\) be an optimizer.
	We define
	\begin{equation*}
		f^{*}(x)=f(x,\alpha^{*})-\frac{1}{2}\|x\|^{2},\; g^{*}(y)=g(y,\beta)-\frac{1}{2}\|y\|^{2}, \text{ and } \md \pi^{*}= \exp(f^{*}\oplus g^{*})\md \gamma.
	\end{equation*}
	We verify \((f^{*},g^{*})\) and \(\pi^{*}\) satisfy all conditions in Assumption \ref{asmp-dual}.
	Note that \(f(x,\alpha)+g(y,\beta)\) is  convex in \((\alpha,\beta)\) for any fixed \((x,y)\) which implies \(U\) is a concave function of \((\alpha,\beta)\).
	As \(\alpha \mapsto f(X,\alpha)\) is differentiable \(\gamma\)-a.s., from the first order optimality condition of \((\alpha^{*},\beta^{*})\) we deduce that for \(1\leq i\leq n \) and \(1\leq j\leq m\)
	\begin{equation}
		\label{eqn-se}
		\left\{
		\begin{aligned}
			\partial_{\alpha_{i}}U(\alpha^{*},\beta^{*}) & =a_{i}-\E_{\gamma}\bigl[\exp(f^{*}(X)+g^{*}(Y))\1_{A_{i}(\alpha^{*})}(X)\bigr]=0, \\
			\partial_{\beta_{j}}U(\alpha^{*},\beta^{*})  & =b_{j}-\E_{\gamma}\bigl[\exp(f^{*}(X)+g^{*}(Y))\1_{B_{i}(\beta^{*})}(Y)\bigr]=0,
		\end{aligned}
		\right.
	\end{equation}
	where \(A_{i}\) and \(B_{j}\) are Laguerre cells given by
	\begin{equation*}
		A_i(\alpha) :=\{x:f(x,\alpha)=\langle x,x_i\rangle +\alpha_i\} \text{ and } B_j(\beta) :=\{y:g(y,\beta)=\langle y,y_j\rangle +\beta_j\}.
	\end{equation*}
	As \(\sum_{i=1}^{n}a_{i}=\sum_{j=1}^{m}b_{j}=1\), we sum up equations \eqref{eqn-se} and obtain \(\E_{\gamma}[\exp(f^{*}\oplus g^{*})]=1\), and hence, \(\pi^{*}\) is a probability measure.
	Moreover, \(f^{*}\) and \(g^{*}\) are \(c\)-convex since \(f(x,\alpha^{*})=f^{*}(x)+\frac{1}{2}\|x\|^{2}\) and \(g(y,\beta^{*})=g^{*}(y)+\frac{1}{2}\|y\|^{2}\) are convex.
	It is clear from the definition that
	\begin{equation*}
		\partial^{c}f^{*}(x)=x_{i} \text{ for } x\in A_{i}(\alpha^{*})  \text{ and } \partial^{c}g^{*}(y)=y_{j} \text{ for } y\in B_{j}(\beta^{*}).
	\end{equation*}
	Together with \eqref{eqn-se}, we verify
	\begin{equation*}
		\eta=(\Id,\partial^{c}f^{*})_{\#} \pi^{*}_{x}\in \Pi(\pi^{*}_{x},\mu) \text{ and } \lambda=(\Id,\partial^{c}g^{*})_{\#} \pi^{*}_{y}\in \Pi(\pi^{*}_{y},\nu).
	\end{equation*}
	Therefore, by Theorem \ref{thm-dual} we deduce
	\begin{align*}
		\inf_{\pi\in \mathscr{P}(\mathbb{R}^{d}\times \mathbb{R}^{d})}I(\pi) & =\sup_{f,g\in L^{0}(\mathbb{R}^{d})}J(f,g)                                                   \\
		                                                                     & =J(f^{*},g^{*})=\sum_{i=1}^{n}a_{i}(f^{*})^{c}(x_{i})+\sum_{j=1}^{m}b_{j}(g^{*})^{c}(y_{j}).
	\end{align*}
	As  \(c_{\mathcal{X}}(x,x')=\frac{1}{2}\|x-x'\|^{2}\) and \(x_{i}\in \partial^{c}f^{*}(x')\) for \(x'\in A_{i}(\alpha^{*})\), we have
	\begin{equation*}
		(f^{*})^{c}(x_{i})=\frac{1}{2}\|x-x'\|^{2} + f^{*}(x')= \alpha_{i}+ \frac{1}{2}\|x_{i}\|^{2} .
	\end{equation*}
	Similarly, we have
	\((g^{*})^{c}(y_{j})= \beta_{j}+ \frac{1}{2}\|y_{j}\|^{2}.\)
	We complete the duality by noticing
	\begin{equation*}
		\sup_{(\alpha,\beta)\in \mathbb{R}^{n+m}}U(\alpha,\beta)=U(\alpha^{*},\beta^{*})=\sup_{f,g\in L^{0}(\mathbb{R}^{d})}J(f,g)=\inf_{\pi\in \mathscr{P}(\mathbb{R}^{d}\times \mathbb{R}^{d})}I(\pi).
	\end{equation*}

	\emph{Step 3.}
	We show that the dual problem has a unique optimizer in \(\mathbb{R}^{n+m}_{\oplus}\).
	By Proposition~\ref{prop-primal}, \(\pi^{*}\) constructed in the previous step is the unique primal optimizer.
	Therefore, for any dual optimizer \((\alpha,\beta)\) we must have
	\begin{equation}
		\label{eqn-density}
		f(X,\alpha)+g(Y,\beta)=f(X,\alpha^{*})+g(Y,\beta^{*}) \quad \gamma\text{-a.s.}
	\end{equation}
	From the first order conditions \eqref{eqn-se}, we notice \(A_{i}(\alpha^{*})\) and \(B_{j}(\beta^{*})\) have non-empty interiors and occupy positive measures under \(\gamma\).
	Taking derivatives on both sides of \eqref{eqn-density} yields \(A_{i}(\alpha)=A_{i}(\alpha^{*})\) and \(B_{j}(\beta)=B_{j}(\beta^{*})\).
	Moreover, we have \(\alpha_{i}+\beta_{j}=\alpha^{*}_{i}+\beta^{*}_{j}\) for any \(1\leq i\leq n\) and \(1\leq j\leq m\), which implies \((\alpha,\beta)\sim_{\oplus}(\alpha^{*},\beta^{*})\).
\end{proof}

\section{Blow-up penalty  limit}
\label{sec-blowup}
In this section, we discuss the limit of the relaxed Schrödinger bridges as the transport penalty blows up.
We recall that
\begin{equation}
	\label{eqn-blowup}
	\inf_{\pi\in \mathscr{P}(\mathcal{X}\times \mathcal{Y})}I^{\varepsilon}(\pi):=\inf_{\pi\in \mathscr{P}(\mathcal{X}\times \mathcal{Y})}  \{\varepsilon^{-1}\ot(\mu,\pi_{x})+ \varepsilon^{-1}\ot(\nu,\pi_{y}) +H(\pi | \gamma)\},
\end{equation}
and  its optimizer is denoted by \(\pi^{*,\varepsilon}\).
We first start with a simpler case where the penalty does not blow up.
In this case, the transport-relaxed Schrödinger bridge converges to the classical Schrödinger bridge.
\begin{prop}
	\label{prop-blowup}
	Assume that \(\{\pi\in \Pi(\mu,\nu): H(\pi|\gamma)<\infty\}\) is non-empty.
	We have \(\pi^{*,\varepsilon}\) converges to \(\pi^{*,0}\) in the weak topology as \(\varepsilon\) goes to 0, where \(\pi^{*,0}\) is the unique solution to
	\begin{equation*}
		\inf_{\pi\in \Pi(\mu,\nu)}H(\pi|\gamma).
	\end{equation*}
\end{prop}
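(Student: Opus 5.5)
The argument is a standard $\Gamma$-convergence-type comparison: bound $I^{\varepsilon}(\pi^{*,\varepsilon})$ uniformly using the classical bridge as a competitor, extract a limit, and identify it by lower semicontinuity. We work in the setting of Proposition \ref{prop-primal}, so $c_{\mathcal{X}},c_{\mathcal{Y}}$ are lower semi-continuous and bounded from below (for the quadratic cost they are nonnegative). By Proposition \ref{prop-primal}, $\pi^{*,\varepsilon}$ exists and is unique for each $\varepsilon$. The classical bridge $\pi^{*,0}$ exists and is unique by strict convexity and lower semi-continuity of $H(\cdot|\gamma)$ on the weakly compact set $\Pi(\mu,\nu)$.

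\textbf{Step 1: uniform bounds.} Since $\pi^{*,0}_{x}=\mu$ and $\pi^{*,0}_{y}=\nu$, both transport terms of $\pi^{*,0}$ vanish (assuming $c(x,x)=0$). Optimality of $\pi^{*,\varepsilon}$ then gives
\begin{equation*}
\varepsilon^{-1}\ot(\mu,\pi^{*,\varepsilon}_{x})+\varepsilon^{-1}\ot(\nu,\pi^{*,\varepsilon}_{y})+H(\pi^{*,\varepsilon}|\gamma)\leq H(\pi^{*,0}|\gamma)=:C<\infty .
\end{equation*}
This yields two consequences.
\begin{enumerate}[(a)]
\item $H(\pi^{*,\varepsilon}|\gamma)\leq C-2\varepsilon^{-1}\inf c$. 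If the costs are nonnegative this is bounded by $C$; in general it is bounded for $\varepsilon\leq 1$ after absorbing the lower bound of $c$.
\item $\ot(\mu,\pi^{*,\varepsilon}_{x})+\ot(\nu,\pi^{*,\varepsilon}_{y})\leq \varepsilon (C-H(\pi^{*,\varepsilon}|\gamma))\leq \varepsilon C\to 0$, using $H\geq 0$.
\end{enumerate}

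\textbf{Step 2: compactness and identification of the limit.} Sublevel sets of $H(\cdot|\gamma)$ are weakly compact: they are uniformly integrable with respect to $\gamma$ and hence tight. So every sequence $\varepsilon_{k}\to 0$ has a subsequence with $\pi^{*,\varepsilon_{k}}\to\bar\pi$ weakly, and $H(\bar\pi|\gamma)\leq\liminf H(\pi^{*,\varepsilon_{k}}|\gamma)\leq C$ by weak lower semi-continuity of relative entropy (Donsker--Varadhan formula with bounded continuous test functions). The marginals converge weakly as well, $\pi^{*,\varepsilon_{k}}_{x}\to\bar\pi_{x}$.

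The main obstacle is to pass to the limit in $\ot(\mu,\pi^{*,\varepsilon_{k}}_{x})\to 0$ and conclude $\bar\pi_{x}=\mu$. We plan to show that $\ot(\mu,\cdot)$ is weakly lower semi-continuous. Take optimal couplings $\eta_{k}\in\Pi(\mu,\pi^{*,\varepsilon_{k}}_{x})$. They are tight because both marginal families are tight, so a subsequence converges to some $\eta\in\Pi(\mu,\bar\pi_{x})$. The Portmanteau theorem for a lower semi-continuous cost bounded below gives $\E_{\eta}[c_{\mathcal{X}}]\leq\liminf\E_{\eta_{k}}[c_{\mathcal{X}}]=0$; this is the same argument as in Proposition \ref{prop-primal}, but carried out with weak rather than total variation convergence. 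Hence $\eta$ is concentrated on $\{c_{\mathcal{X}}=0\}$. For the quadratic cost this set is the diagonal, so $\bar\pi_{x}=\mu$. In general we need $c_{\mathcal{X}}(x,x')=0\iff x=x'$, and we will state this as the standing hypothesis. The same argument gives $\bar\pi_{y}=\nu$, so $\bar\pi\in\Pi(\mu,\nu)$.

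\textbf{Step 3: conclusion.} From Steps 1 and 2,
\begin{equation*}
H(\bar\pi|\gamma)\leq\liminf_{k} H(\pi^{*,\varepsilon_{k}}|\gamma)\leq H(\pi^{*,0}|\gamma).
\end{equation*}
Since $\bar\pi\in\Pi(\mu,\nu)$ and $\pi^{*,0}$ is the unique minimizer of $H(\cdot|\gamma)$ over $\Pi(\mu,\nu)$, we get $\bar\pi=\pi^{*,0}$. Every subsequence therefore has a further subsequence converging to the same limit, so the whole family $\pi^{*,\varepsilon}$ converges weakly to $\pi^{*,0}$ as $\varepsilon\to0$. As a byproduct, the inequality chain also shows $H(\pi^{*,\varepsilon}|\gamma)\to H(\pi^{*,0}|\gamma)$.
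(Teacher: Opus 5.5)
Your proposal is correct and follows the paper's proof. Both use $\pi^{*,0}$ as a competitor to bound $I^{\varepsilon}(\pi^{*,\varepsilon})$ uniformly, extract a weakly convergent subsequence, and identify its limit through weak lower semicontinuity of $H(\cdot|\gamma)$ and uniqueness of the classical bridge. You also supply two details the paper only asserts: tightness via entropy sublevel sets, and weak lower semicontinuity of $\ot(\mu,\cdot)$ to place the limit in $\Pi(\mu,\nu)$.

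One small correction. Your remark in (a), that a negative lower bound on $c$ can be ``absorbed'' for $\varepsilon\leq 1$, is false, since $-2\varepsilon^{-1}\inf c\to\infty$ as $\varepsilon\to 0$. This is harmless, because the rest of your argument, and the statement itself, require $c\geq 0$ with $c(x,x')=0$ if and only if $x=x'$, as holds for the quadratic cost.
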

\begin{proof}
	The uniqueness and existence of the \(\pi^{*,0}\) follows directly from the strict convexity and the lower-semi continuity of the relative entropy \(H\).
	We notice that  \(\inf_{\pi\in \mathscr{P}(\mathcal{X}\times \mathcal{Y})}I^{\varepsilon}(\pi)\leq I^{\varepsilon}(\pi^{*,0})\) is uniformly bounded.
	This yields the convergence of \(\pi_{x}^{*,\varepsilon}\) to \(\pi_{x}^{*,0}\) and the convergence of \(\pi_{y}^{*,\varepsilon}\) to \(\pi_{y}^{*,0}\) in the weak topology.
	Therefore, \(\{\pi^{*,\varepsilon}\}_{0<\varepsilon<1}\) forms a precompact set.
	Let \(\varepsilon_{n}\to 0\) and \(\{\pi^{*,\varepsilon_{n}}\}\) be a converging subsequence with limit \(\hat{\pi}\).
	From \(I^{\varepsilon}(\pi^{*,\varepsilon})\leq I^{\varepsilon}(\pi^{*,0})\), we derive
	\begin{equation*}
		H(\hat{\pi}|\gamma)\leq H(\pi^{*,0}|\gamma).
	\end{equation*}
	By the uniqueness of \(\pi^{*,0}\), we must have \(\tilde{\pi}=\pi^{*,0}\), and we deduce the convergence of \(\pi^{*,\varepsilon}\).
\end{proof}

It is obvious that under the semi-discrete setting (Assumption \ref{asmp-semi}),
\[
	\{\pi\in \Pi(\mu,\nu): H(\pi|\gamma)<\infty\}=\emptyset,
\]
and the classical Schrödinger bridge does not admit a solution.
Nevertheless, we will show that \(\pi^{*,\varepsilon}\) still converges to a limit coupling \(\pi^{*,0}\in \Pi(\mu,\nu)\).
Moreover, the limit \(\pi^{*,0}\) is the solution of a discrete Schrödinger bridge problem depending only on the local property of the reference measure \(\gamma\).

\begin{asmp}
	\label{asmp-blowup}
	We assume that \(\gamma\) has a positive and bounded density \(\rho\).
	Moreover, we assume \(\rho\) is locally continuous at \((x_{i},y_{j})\) for any \(1\leq i\leq n\) and \(1\leq j\leq m\).
\end{asmp}

\begin{thm}
	\label{thm-blowup}
	Let  Assumptions \ref{asmp-semi} and \ref{asmp-blowup} hold.
	We have \(\pi^{*,\varepsilon}\) converges to \(\pi^{*,0}\) in the weak topology as \(\varepsilon\) goes to 0, where \(\pi^{*,0}\) is the unique solution to
	\begin{equation}
		\label{eqn-disc}
		\inf_{\pi\in \Pi(\mu,\nu)}H(\pi | \sigma ).
	\end{equation}
	Here, \(\sigma\) is a discrete measure given by \(\sigma=\sum_{i=1}^{n}\sum_{j=1}^{m}(2 \pi)^{d}\rho(x_{i},y_{j})\delta_{(x_{i},y_{j})}\).
	Furthermore, we have the following expansion of \(I^{\varepsilon}(\pi)\):
	\begin{equation*}
		I^{\varepsilon}(\pi^{*,\varepsilon}) = -d \ln \varepsilon +H(\pi^{*,0}|\sigma) + o(1).
	\end{equation*}
\end{thm}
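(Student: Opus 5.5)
We prove matching upper and lower bounds for \(I^{\varepsilon}(\pi^{*,\varepsilon})+d\ln\varepsilon\), and identify the limit through the equality case of the lower bound. Throughout, the transport cost is the quadratic cost \(\frac{1}{2}\|\cdot-\cdot\|^{2}\) used in Section \ref{sec-tsb}. The constant \((2\pi)^{d}\) in \(\sigma\) suggests this normalization: it is exactly \(\int\exp(-\frac{1}{2\varepsilon}\|u\|^{2}-\frac{1}{2\varepsilon}\|v\|^{2})\md u\md v=(2\pi\varepsilon)^{d}\) at \(\varepsilon=1\). The local picture is that the optimal bridge places, near each atom pair \((x_{i},y_{j})\), a Gaussian blob of variance \(\varepsilon\) in each of the \(2d\) coordinates.

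\emph{Upper bound.} Let \(\pi^{*,0}=\sum_{ij}p_{ij}\delta_{(x_{i},y_{j})}\) be the discrete Schrödinger bridge from Theorem \ref{thm-se}. We mollify it as
\[
\pi^{\varepsilon}=\sum_{ij}p_{ij}\,N(x_{i},\varepsilon I_{d})\otimes N(y_{j},\varepsilon I_{d}).
\]
Transporting each Gaussian back to its center gives \(\varepsilon^{-1}\ot(\mu,\pi^{\varepsilon}_{x})\le \varepsilon^{-1}\cdot\frac{1}{2}\varepsilon d=\frac{d}{2}\), and the same bound holds for \(\nu\). For the entropy term, the blobs are asymptotically disjoint, so
\[
H(\pi^{\varepsilon}|\gamma)=\sum_{ij}p_{ij}\bigl(\ln p_{ij}-d\ln(2\pi\varepsilon)-d-\ln\rho(x_{i},y_{j})\bigr)+o(1).
\]
Here we use continuity of \(\rho\) at the atoms together with positivity and boundedness of \(\rho\) to control the tails. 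Adding the pieces, the \(d\) from the transport terms cancels the \(-d\) from the Gaussian entropy, and we obtain
\[
I^{\varepsilon}(\pi^{\varepsilon})=-d\ln\varepsilon+H(\pi^{*,0}|\sigma)+o(1).
\]

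\emph{Lower bound.} We use weak duality, i.e. the first half of the proof of Theorem \ref{thm-dual}, with scaled potentials. By rescaling, \(I^{\varepsilon}\) is the transport-relaxed problem with cost \(\frac{1}{2\varepsilon}\|\cdot\|^{2}\). Plugging in the discrete dual potentials \((p^{*},q^{*})\) of \eqref{eqn-discse}, we set
\[
f(x)=\min_{i}\Bigl\{\frac{1}{2\varepsilon}\|x-x_{i}\|^{2}-p^{*}_{i}-\ln\bigl((2\pi\varepsilon)^{-d/2}\bigr)\Bigr\}\cdot(-1),
\]
and analogously \(g\) built from \(q^{*}\) and the \(y_{j}\). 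These are \(c\)-convex with \(f^{c}(x_{i})\ge p^{*}_{i}-\frac{d}{2}\ln(2\pi\varepsilon)\). In the dual value
\[
J(f,g)=\sum_{i}a_{i}f^{c}(x_{i})+\sum_{j}b_{j}g^{c}(y_{j})-\E_{\gamma}[e^{f\oplus g}]+1,
\]
the exponential term equals \(\sum_{ij}e^{p^{*}_{i}+q^{*}_{j}}(2\pi)^{-d}\varepsilon^{-d}\int_{A_{i}\times B_{j}}e^{-\frac{1}{2\varepsilon}(\|x-x_{i}\|^{2}+\|y-y_{j}\|^{2})}\rho\,\md x\md y\). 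By continuity of \(\rho\) at the atoms, this tends to \(\sum_{ij}e^{p^{*}_{i}+q^{*}_{j}}\rho(x_{i},y_{j})\), which equals \(\sum_{ij}p_{ij}(2\pi)^{-d}\); the constants must be fixed so that this tends to \(1\). The remaining terms give \(-d\ln\varepsilon+\sum a_{i}p^{*}_{i}+\sum b_{j}q^{*}_{j}+\text{const}\), and by discrete duality \(\sum a_{i}p^{*}_{i}+\sum b_{j}q^{*}_{j}=H(\pi^{*,0}|\sigma)\). The \(\ln(2\pi)\) factors have to be tracked carefully so that they match \(\sigma\). This yields
\[
\inf I^{\varepsilon}\ge -d\ln\varepsilon+H(\pi^{*,0}|\sigma)-o(1),
\]
which together with the upper bound gives the expansion.

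\emph{Convergence of \(\pi^{*,\varepsilon}\).} This is the main obstacle. From the expansion, \(\ot(\mu,\pi^{*,\varepsilon}_{x})\le\varepsilon\bigl(-d\ln\varepsilon+C\bigr)-\varepsilon H(\pi^{*,\varepsilon}|\gamma)\). Since \(\gamma\) is a probability measure, \(H\ge0\), so \(\ot(\mu,\pi^{*,\varepsilon}_{x})\to0\), and likewise for \(\nu\). Hence the family is tight and every weak limit \(\hat\pi\) lies in \(\Pi(\mu,\nu)\), i.e. \(\hat\pi=\sum\hat p_{ij}\delta_{(x_{i},y_{j})}\). To identify \(\hat p=p\), we sharpen the lower bound into a stability statement. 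Write \(I^{\varepsilon}(\pi)-J(f,g)\) as the sum of the slack in Lemma \ref{lem-entropy}, which is \(H(\pi|e^{f\oplus g}\gamma)\) up to mass normalization, and two nonnegative transport slacks. Then \(H(\pi^{*,\varepsilon}|e^{f\oplus g}\gamma)\to0\). By Pinsker's inequality, \(\pi^{*,\varepsilon}\) is close in total variation to the normalized measure \(e^{f\oplus g}\gamma\), and the latter converges weakly to \(\sum_{ij}e^{p^{*}_{i}+q^{*}_{j}}\sigma_{ij}\delta_{(x_{i},y_{j})}=\pi^{*,0}\). Therefore \(\pi^{*,\varepsilon}\to\pi^{*,0}\) weakly.
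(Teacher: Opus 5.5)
Your route is correct once two local repairs are made (second paragraph), and it differs genuinely from the paper's.

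\textbf{Comparison.} The paper works only with the actual dual optimizers. It shows that $(\alpha^{*,\varepsilon},\beta^{*,\varepsilon})$ is precompact in $\mathbb{R}^{n+m}_{\oplus}$. It then identifies the leading-order limit $\alpha^{*,0}_i=-\frac{1}{2}\|x_i\|^2$, $\beta^{*,0}_j=-\frac{1}{2}\|y_j\|^2$ by contradiction with the first-order conditions. Next it writes $\alpha^{*,\varepsilon}_i=\alpha^{*,0}_i-\frac{d}{2}\varepsilon\ln\varepsilon+\varepsilon p^{*,\varepsilon}_i$ and observes that the cell masses $\pi^{*,\varepsilon}_{ij}$ solve a discrete Schr\"odinger system whose kernel $\sigma^{*,\varepsilon}_{ij}$ tends to $(2\pi)^d\rho(x_i,y_j)$. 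It concludes by the stability in Remark \ref{rmk-stab}, and reads off the value from $I^{\varepsilon}(\pi^{*,\varepsilon})=U^{\varepsilon}(\alpha^{*,\varepsilon},\beta^{*,\varepsilon})$.

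You instead sandwich the value between an explicit primal competitor and an explicit dual pair. Your dual pair is exactly the paper's ansatz with $p^{*,\varepsilon}$ frozen at its limit $p^{*,0}$. You then get convergence of $\pi^{*,\varepsilon}$ from the vanishing duality gap plus Pinsker. This needs only weak duality: no dual attainment, no compactness of dual optimizers, and no stability of the discrete system. It applies verbatim to near-minimizers and explains the constants, since the optimal blob variance is exactly $\varepsilon$. It also gives more than weak convergence: $\pi^{*,\varepsilon}$ is TV-close to the explicit normalized measure $e^{f\oplus g}\gamma$. Your separate tightness step is then redundant. What the paper's route gives that yours does not directly is convergence of the dual optimizers themselves, i.e.\ $\alpha^{*,\varepsilon}_i=-\frac{1}{2}\|x_i\|^2-\frac{d}{2}\varepsilon\ln\varepsilon+\varepsilon p^{*,0}_i+o(\varepsilon)$ modulo $\sim_{\oplus}$.

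\textbf{Repairs.} First, the normalization you flagged. With the factor $(2\pi\varepsilon)^{-d/2}$, the exponential term tends to $(2\pi)^{-d}$. Your lower bound is then off by $d\ln(2\pi)-1+(2\pi)^{-d}>0$, which also breaks the claim that the slack tends to $0$. Use $\varepsilon^{-d/2}$ instead, i.e.\ $f(x)=\max_i\{p^*_i-\frac{d}{2}\ln\varepsilon-\frac{1}{2\varepsilon}\|x-x_i\|^2\}$, and likewise for $g$. The exponential term then tends to $\sum_{ij}\sigma_{ij}e^{p^*_i+q^*_j}=1$, and the bound is exactly $-d\ln\varepsilon+H(\pi^{*,0}|\sigma)-o(1)$.

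Second, in the upper bound, positivity and boundedness of $\rho$ do not control the Gaussian tails. Assumption \ref{asmp-blowup} lets $-\ln\rho$ grow arbitrarily fast; for example, with $\rho(z)\propto\exp(-e^{\|z\|^{3}})$ your untruncated mixture has $H(\pi^{\varepsilon}|\gamma)=+\infty$. Restrict each blob to a fixed small ball around $(x_i,y_j)$, where $\rho$ is bounded below by continuity at the atom, and renormalize. This changes the transport and entropy terms by $o(1)$ and makes the blobs disjoint, so your mixture-entropy computation becomes exact.
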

\begin{rmk}
	We remark that the leading-order blow-up of \(I^{\varepsilon}(\pi^{*,\varepsilon})\) is given by \(-d\ln \varepsilon\).
	This term is intrinsic --- only depends on  \(d\), the dimension of the underlying space and is independent of the choice of \(\gamma\), \(\mu\), \(\nu\).
\end{rmk}

\begin{proof}
	Similar to Theorem \ref{thm-semi}, the dual of \eqref{eqn-blowup} can be formulated as a finite-dimensional maximization \(\sup_{(\alpha,\beta)\in \mathbb{R}^{n+m}}U^{\varepsilon}(\alpha,\beta)\) where
	\begin{equation}
		\label{eqn-dual-blowup}
		\begin{aligned}
			\qquad U^{\varepsilon}(\alpha,\beta) & := \sum_{i=1}^{n}\varepsilon^{-1} a_{i}(\alpha_{i}+\frac{1}{2}\|x_{i}\|^{2})+\sum_{j=1}^{m}\varepsilon^{-1}b_{j}(\beta_{j}+\frac{1}{2}\|y_{j}\|^{2}) \\
			                                     & \quad -\int\exp(\varepsilon^{-1}(f(x,\alpha)+g(y,\beta)-\frac{1}{2}\|x\|^{2}-\frac{1}{2}\|y\|^{2}))\md \gamma +1.
		\end{aligned}
	\end{equation}
	We denote the dual optimizer by \((\alpha^{*,\varepsilon},\beta^{*,\varepsilon})\).

	\emph{Step 1.} We show \(\{(\alpha^{*,\varepsilon},\beta^{*,\varepsilon})\}_{0<\varepsilon<1}\) is precompact in \(\mathbb{R}^{n+m}_{\oplus}\) through a prior estimate.
	We note that
	\begin{align*}
		0 & \leq \varepsilon U^{\varepsilon}(\alpha^{*,\varepsilon},\beta^{*,\varepsilon})                                                                                                                                                                                                                                                                                     \\
		  & = \sum_{i=1}^{n} a_{i}(\alpha_{i}^{*,\varepsilon}+\frac{1}{2}\|x_{i}\|^{2})+\sum_{j=1}^{m}b_{j}(\beta_{j}^{*,\varepsilon}+\frac{1}{2}\|y_{j}\|^{2})                                                                                                                                                                                                                \\
		  & \quad -\varepsilon \int\exp\biggl(\varepsilon^{-1}\Bigl(f(x,\alpha^{*,\varepsilon})+g(y,\beta^{*,\varepsilon})-\frac{1}{2}\|x\|^{2}-\frac{1}{2}\|y\|^{2}\Bigr)\biggr)\md \gamma +1                                                                                                                                                                                 \\
		  & \leq \sum_{i=1}^{n} \frac{1}{2}a_{i}\|x_{i}\|^{2}+\sum_{j=1}^{m}\frac{1}{2}b_{j}\|y_{j}\|^{2} +1                                                                                                                                                                                                                                                                   \\
		  & \quad + \overline{\alpha}^{*,\varepsilon} +\overline{\beta}^{*,\varepsilon} - \varepsilon \int \exp\biggl(\varepsilon^{-1} \Bigl(\inf_{1\leq i\leq n}\langle x, x_{i} \rangle+\inf_{1\leq j\leq m}\langle y, y_{j} \rangle -\frac{1}{2}\|x\|^{2}-\frac{1}{2}\|y\|^{2}+ \overline{\alpha}^{*,\varepsilon} +\overline{\beta}^{*,\varepsilon}\Bigr)\biggr)\md \gamma.
	\end{align*}
	Assume on \(\{(x,y):\|x\|+\|y\| \leq \delta\}\) we have
	\begin{equation*}
		\inf_{1\leq i\leq n}\langle x, x_{i} \rangle+\inf_{1\leq j\leq m}\langle y, y_{j} \rangle -\frac{1}{2}\|x\|^{2}-\frac{1}{2}\|y\|^{2}\geq -M.
	\end{equation*}
	Then plugging it into the above estimate yields
	\begin{align*}
		0 & \leq \sum_{i=1}^{n} a_{i}\frac{1}{2}\|x_{i}\|^{2}+\sum_{j=1}^{m}b_{j}\frac{1}{2}\|y_{j}\|^{2} +1                                                                                                                              \\
		  & \quad + \overline{\alpha}^{*,\varepsilon} +\overline{\beta}^{*,\varepsilon}- \varepsilon \exp(\varepsilon^{-1}(-M +\overline{\alpha}^{*,\varepsilon} +\overline{\beta}^{*,\varepsilon})) \gamma(\{\|x\|+\|y\|\leq \delta \}).
	\end{align*}
	By Assumption \ref{asmp-blowup}, we have \(\gamma(\{\|x\|+\|y\|\leq \delta \})>0\) which further implies there exists \(R>0\) independent of \(\varepsilon\) such that
	\begin{equation*}
		\overline{\alpha}^{*,\varepsilon} +\overline{\beta}^{*,\varepsilon} \leq R.
	\end{equation*}
	Moreover, we notice
	\begin{align*}
		0 & \leq \varepsilon U^{\varepsilon}(\alpha^{*,\varepsilon},\beta^{*,\varepsilon})                                                                                                                                                                                                                \\
		  & \leq 1+ \sum_{i=1}^{n} a_{i}(\alpha_{i}^{*,\varepsilon}+\frac{1}{2}\|x_{i}\|^{2})+\sum_{j=1}^{m}b_{j}(\beta_{j}^{*,\varepsilon}+\frac{1}{2}\|y_{j}\|^{2})                                                                                                                                     \\
		  & \leq 1+ \sum_{i=1}^{n}\frac{1}{2}a_{i}\|x_{i}\|^{2}+\sum_{j=1}^{m}\frac{1}{2}b_{j}\|y_{j}\|^{2}+ \underline{\alpha}^{*,\varepsilon}\min_{1\leq i \leq n}a_{i}+\underline{\beta}^{*,\varepsilon}\min_{1\leq j \leq m}b_{j}+\overline{\alpha}^{*,\varepsilon}+\overline{\beta}^{*,\varepsilon}.
	\end{align*}
	Therefore, we deduce  \((\alpha^{*,\varepsilon},\beta^{*,\varepsilon})\) is contained in the compact set \(B_{R}:=\{(\alpha,\beta):-R\leq \underline{\alpha}+\underline{\beta}\leq  \overline{\alpha}+\overline{\beta}\leq R\}\) for sufficiently large \(R\).

	\emph{Step 2.} We claim that \((\alpha^{*,\varepsilon},\beta^{*,\varepsilon})\) converges to a limit \((\alpha^{*,0},\beta^{*,0})\) where \(\alpha^{*,0}_{i}=-\frac{1}{2}\|x_{i}\|^{2}\) and \(\beta^{*,0}_{j}=-\frac{1}{2}\|y_{j}\|^{2}\).
	It suffices to show that for any converging subsequence \(\{(\alpha^{*,\varepsilon_{n}},\beta^{*,\varepsilon_{n}})\}_{n\geq 1}\)  with \(\varepsilon_{n}\to 0\) its limit \((\hat{\alpha},\hat{\beta})\) coincides with \((\alpha^{*,0},\beta^{*,0})\).
	Assume there exist \((x_{0},y_{0})\) and \(r,\delta>0\) such that on \(\{(x,y):\|x-x_{0}\|+\|y-y_{0}\|\leq r\}\)
	\begin{equation}
		\label{eqn-bound}
		f(x,\hat{\alpha})+g(y,\hat{\beta})> \frac{1}{2}\|x\|^{2}+\frac{1}{2}\|y\|^{2}+\delta.
	\end{equation}
	We notice that
	\begin{align*}
		0 & \leq \limsup_{n\to \infty}\varepsilon_{n} U^{\varepsilon_{n}}(\alpha^{*,\varepsilon_{n}}, \beta^{*,\varepsilon_{n}})                                                                                                    \\
		  & = \sum_{i=1}^{n} a_{i}(\hat{\alpha}_{i}+\frac{1}{2}\|x_{i}\|^{2})+\sum_{j=1}^{m}b_{j}(\hat{\beta}_{j}+\frac{1}{2}\|y_{j}\|^{2})                                                                                         \\
		  & \quad -\liminf_{n\to\infty }\varepsilon_{n} \int\exp\biggl(\varepsilon_{n}^{-1}\Bigl(f(x,\alpha^{*,\varepsilon_{n}})+g(y,\beta^{*,\varepsilon_{n}})-\frac{1}{2}\|x\|^{2}-\frac{1}{2}\|y\|^{2}\Bigr)\biggr)\md \gamma +1 \\
		  & \leq \sum_{i=1}^{n} a_{i}(\hat{\alpha}_{i}+\frac{1}{2}\|x_{i}\|^{2})+\sum_{j=1}^{m}b_{j}(\hat{\beta}_{j}+\frac{1}{2}\|y_{j}\|^{2})                                                                                      \\
		  & \quad  - \liminf_{n\to\infty}\varepsilon_{n} \exp(\varepsilon_{n}^{-1}\delta) \gamma(\{\|x-x_{0}\|+\|y-y_{0}\|\leq r\}).
	\end{align*}
	Since we assume \(\gamma\) has a positive density, the upper bound above goes to \(-\infty\) as \(\varepsilon_{n}\to 0\).
	Therefore, we show that \eqref{eqn-bound} cannot be true, and hence \(f(x,\hat{\alpha})+g(y,\hat{\beta})\leq \frac{1}{2}\|x\|^{2}+ \frac{1}{2}\|y\|^{2}\).
	This yields that for any \(1\leq i\leq n\) and \(1\leq j\leq m\)
	\[
		\langle x, x_{i} \rangle + \langle y, y_{j} \rangle +\hat{\alpha}_{i}+ \hat{\beta}_{j}\leq \frac{1}{2}\|x\|^{2}+\frac{1}{2}\|y\|^{2},
	\]
	which implies \(\hat{\alpha}_{i}+\hat{\beta}_{j}\leq -\frac{1}{2}\|x_{i}\|^{2}-\frac{1}{2}\|y_{j}\|^{2}\).
	Now we assume
	\begin{equation}
		\label{eqn-lower}
		\hat{\alpha}_{i}+\hat{\beta}_{j}< -\frac{1}{2}\|x_{i}\|^{2}-\frac{1}{2}\|y_{j}\|^{2} \text{ for some } 1\leq i\leq n \text{ and } 1\leq j\leq m.
	\end{equation}
	Hence, we have either \(\hat{\alpha}_{i}+\hat{\beta}_{l}< -\frac{1}{2}\|x_{i}\|^{2}-\frac{1}{2}\|y_{l}\|^{2}\) for any \(1\leq l\leq m\) or  \(\hat{\alpha}_{k}+\hat{\beta}_{j}< -\frac{1}{2}\|x_{k}\|^{2}-\frac{1}{2}\|y_{j}\|^{2} \) for any \(1\leq k\leq m\).
	Without loss of generality, we assume the former case.
	In particular, for sufficiently large \(n\) we have
	\begin{equation*}
		f(x,\alpha^{*,\varepsilon_{n}})+g(y,\beta^{*,\varepsilon_{n}})-\frac{1}{2}\|x\|^{2}-\frac{1}{2}\|y\|^{2}<0  \text{ on } A_{i}(\alpha^{*,\varepsilon_{n}})\times \mathcal{Y}.
	\end{equation*}
	Since \((\alpha^{*,\varepsilon_{n}},\beta^{*,\varepsilon_{n}})\) is the optimizer of \eqref{eqn-dual-blowup}, it satisfies the first order condition:
	\begin{equation*}
		a_{i}  =  \int_{A_{i}(\alpha^{*,\varepsilon_{n}})\times \mathcal{Y}} \exp\biggl(\varepsilon_{n}^{-1}\Bigl(f(x,\alpha^{*,\varepsilon_{n}})+g(y,\beta^{*,\varepsilon_{n}})-\frac{1}{2}\|x\|^{2}-\frac{1}{2}\|y\|^{2}\Bigr)\biggr)\md \gamma.
	\end{equation*}
	Taking the limit \(\varepsilon_{n}\to 0\), by the dominated convergence theorem the above identity yields
	\begin{equation*}
		a_{i}= \lim_{n\to \infty} \int_{A_{i}(\hat{\alpha})\times \mathcal{Y}} \exp\biggl(\varepsilon_{n}^{-1}\Bigl(f(x,\hat{\alpha})+g(y,\hat{\beta})-\frac{1}{2}\|x\|^{2}-\frac{1}{2}\|y\|^{2}\Bigr)\biggr)\md \gamma.
	\end{equation*}
	For the  exponent above, we have for any \((x,y)\in A_{i}(\hat{\alpha})\times \mathcal{Y}\)
	\begin{align*}
		f(x,\hat{\alpha})+g(y,\hat{\beta})-\frac{1}{2}\|x\|^{2}-\frac{1}{2}\|y\|^{2} & = \langle x, x_{i} \rangle +\hat{\alpha}_{i} +\sup_{1\leq l\leq m} \{\langle y, y_{l} \rangle+\hat{\beta}_{l}\} -\frac{1}{2}\|x\|^{2}- \frac{1}{2}\|y\|^{2} \\
		                                                                             & < -\frac{1}{2}\|x-x_{i}\|^{2} -\sup_{1\leq l\leq m}\frac{1}{2}\|y-y_{l}\|^{2}<0.
	\end{align*}
	This leads to the contradiction that
	\begin{equation*}
		0<a_{i}= \lim_{n\to \infty}\int_{A_{i}(\hat{\alpha})\times \mathcal{Y}} \exp\biggl(\varepsilon_{n}^{-1}\Bigl(f(x,\hat{\alpha})+g(y,\hat{\beta})-\frac{1}{2}\|x\|^{2}-\frac{1}{2}\|y\|^{2}\Bigr)\biggr)\md \gamma=0.
	\end{equation*}
	Therefore, \eqref{eqn-lower} cannot be true, and we deduce \(\hat{\alpha}_{i}+\hat{\beta}_{j}= -\frac{1}{2}\|x_{i}\|^{2}-\frac{1}{2}\|y_{j}\|^{2}\), or equivalently, \((\hat{\alpha},\hat{\beta})\sim_{\oplus}(\alpha^{*,0},\beta^{*,0})\).

	\emph{Step 3.} We derive the limit of \(\pi^{*,\varepsilon}\).
	Following the same argument as Theorem \ref{thm-semi}, we have
	\begin{equation}
		\label{eqn-pi-eps}
		\md 	\pi^{*,\varepsilon}  =\exp\biggl(\varepsilon^{-1}\Bigl(f(x,\alpha^{*,\varepsilon})+g(y,\beta^{*,\varepsilon})-\frac{1}{2}\|x\|^{2}-\frac{1}{2}\|y\|^{2}\Bigr)\biggr) \md \gamma.
	\end{equation}
	Recall that we set \(\alpha^{*,0}_{i}= - \frac{1}{2}\|x_{i}\|^{2}\) and \(\beta^{*,0}_{j}= -\frac{1}{2}\|y_{j}\|^{2}\).
	We write  the expansions
	\begin{equation}
		\label{eqn-exp}
		\alpha^{*,\varepsilon}_{i}= \alpha^{*,0}_{i} - \frac{d   }{2} \varepsilon\ln \varepsilon+ \varepsilon p^{*,\varepsilon}_{i} \text{ and }\beta^{*,\varepsilon}_{j}= \beta^{*,0}_{j} - \frac{d }{2}  \varepsilon \ln \varepsilon+ \varepsilon q^{*,\varepsilon}_{i}
	\end{equation}
	and define
	\begin{align*}
		\pi^{*,\varepsilon}_{ij} & =\int_{A_{i}(\alpha^{*,\varepsilon})\times B_{j}(\beta^{*,\varepsilon})}\exp\biggl(\varepsilon^{-1}\Bigl(f(x,\alpha^{*,\varepsilon})+g(y,\beta^{*,\varepsilon})-\frac{1}{2}\|x\|^{2}-\frac{1}{2}\|y\|^{2}\Bigr)\biggr)\md \gamma \\
		                         & = \varepsilon^{-d}\int_{A_{i}(\alpha^{*,\varepsilon})\times B_{j}(\beta^{*,\varepsilon})}\exp\Bigl(-\frac{1}{2 \varepsilon} \|x-x_{i}\|^{2}- \frac{1}{2 \varepsilon}\|y-y_{j}\|^{2}\Bigr)\rho(x,y)\md x \md y                    \\
		                         & \quad \times \exp\Bigl(\frac{1}{\varepsilon}\Bigl(\alpha^{*,\varepsilon}+\beta^{*,\varepsilon}+\frac{1}{2 } \|x_{i}\|^{2}+ \frac{1}{2 }\|y_{j}\|^{2}\Bigr)+d \ln \varepsilon\Bigr)                                               \\
		                         & := \sigma_{ij}^{*,\varepsilon}\times \exp(p_{i}^{*,\varepsilon}+ q_{j}^{*,\varepsilon}).
	\end{align*}
	With the above notations, the first order optimality condition of  \eqref{eqn-dual-blowup} yields
	\begin{equation*}
		\left\{
		\begin{aligned}
			a_{i} & = \int_{A_{i}(\alpha^{*,\varepsilon})\times \mathcal{Y}} \exp\biggl(\varepsilon^{-1}\Bigl(f(x,\alpha^{*,\varepsilon})+g(y,\beta^{*,\varepsilon})-\frac{1}{2}\|x\|^{2}-\frac{1}{2}\|y\|^{2}\Bigr)\biggr)\md \gamma = \sum_{j=1}^{m}\sigma_{ij}^{*,\varepsilon}\exp(p_{i}^{*,\varepsilon}+ q_{j}^{*,\varepsilon}), \\
			b_{j} & = \int_{\mathcal{X}\times B_{j}(\beta^{*,\varepsilon})} \exp\biggl(\varepsilon^{-1}\Bigl(f(x,\alpha^{*,\varepsilon})+g(y,\beta^{*,\varepsilon})-\frac{1}{2}\|x\|^{2}-\frac{1}{2}\|y\|^{2}\Bigr)\biggr)\md \gamma=\sum_{j=1}^{m}\sigma_{ij}^{*,\varepsilon}\exp(p_{i}^{*,\varepsilon}+ q_{j}^{*,\varepsilon}).
		\end{aligned}
		\right.
	\end{equation*}
	In particular, by Theorem \ref{thm-se}, \((p^{*,\varepsilon}, q^{*,\varepsilon})\) is the unique solution to the Schrödinger equation \begin{equation*}
		\left\{
		\begin{aligned}
			a_{i} & =\sum_{j=1}^{m}\sigma_{ij}^{*,\varepsilon}\exp(p_{i}+q_{j}), \\
			b_{j} & =\sum_{i=1}^{n}\sigma_{ij}^{*,\varepsilon}\exp(p_{i}+q_{j}). \\
		\end{aligned}
		\right.
	\end{equation*}
	We notice that
	\begin{align*}
		\lim_{\varepsilon\to 0} \sigma_{ij}^{*,\varepsilon} & =\lim_{\varepsilon\to 0}\varepsilon^{-d}\int_{A_{i}(\alpha^{*,\varepsilon})\times B_{j}(\beta^{*,\varepsilon})}\exp\Bigl(-\frac{1}{2 \varepsilon} \|x-x_{i}\|^{2}- \frac{1}{2 \varepsilon}\|y-y_{j}\|^{2}\Bigr)\rho(x,y)\md x \md y \\
		                                                    & = \lim_{\varepsilon\to 0}\int_{U_{i}(\alpha^{*,\varepsilon})\times V_{j}(\beta^{*,\varepsilon})} \exp\Bigl(-\frac{1}{2}\|u\|^{2}-\frac{1}{2}\|v\|^{2}\Bigr)\rho(x_{0}+\sqrt{\varepsilon}u,y_{0}+\sqrt{\varepsilon}v) \md u\md v     \\
		                                                    & = \lim_{\varepsilon\to 0}\int_{\mathbb{R}^{d}\times \mathbb{R}^{d}} \exp\Bigl(-\frac{1}{2}\|u\|^{2}-\frac{1}{2}\|v\|^{2}\Bigr)\rho(x_{0}+\sqrt{\varepsilon}u,y_{0}+\sqrt{\varepsilon}v) \md u\md v                                  \\
		                                                    & = (2 \pi)^{d}\rho(x_{0},y_{0})>0,
	\end{align*}
	where we change the variable in the second line \(x-x_{0}=\sqrt{\varepsilon}u\), \(y-y_{0}=\sqrt{\varepsilon}v\) and set \(U_{i}(\alpha^{*},\varepsilon), V_{j}(\beta^{*,\varepsilon})\) be the corresponding domain.
	The third line follows from the fact that \(A_{i}(\alpha^{*,\varepsilon})\) converges to \(A_{i}(\alpha^{*,0})\) and \(B_{j}(\beta^{*,\varepsilon})\) converges to \(B_{j}(\beta^{*,0})\); the last line follows from Assumption~\ref{asmp-blowup}.
	By Remark \ref{rmk-stab}, we have the sequence of discrete measures \(\sum_{i=1}^{n}\sum_{j=1}^{m}\pi^{*,\varepsilon}_{ij}\delta_{(x_{i},y_{j})}\in \Pi(\mu,\nu)\) converges weakly to
	\begin{equation*}
		\pi^{*,0}=\argmin_{\pi\in \P(\mu,\nu)}H(\pi|\sigma),
	\end{equation*}
	which yields \(\lim_{\varepsilon\to 0}\pi^{*\varepsilon}_{ij}=\pi^{*,0}_{ij}\) for any \(1\leq i \leq n\) and \(1\leq j\leq m\).
	Therefore, for any continuous and bounded function \(\varphi\), plugging in to \eqref{eqn-pi-eps} we have
	\begin{align*}
		\lim_{\varepsilon\to 0}\int \varphi(x,y)\md \pi^{*,\varepsilon}(\md x ,\md y)=\lim_{\varepsilon\to 0} \sum_{i=1}^{n}\sum_{j=1}^{m} \pi^{*,\varepsilon}_{ij}\varphi(x_{i},y_{j})= \sum_{i=1}^{n}\sum_{j=1}^{m} \pi^{*,0}_{ij}\varphi(x_{i},y_{j}),
	\end{align*}
	which implies the weak convergence of \(\pi^{*,\varepsilon}\) to \(\pi^{*,0}\).

	\emph{Step 4.}
	We notice
	\begin{eqnarray}
		I^{\varepsilon}(\pi^{*,\varepsilon}) & =&U^{\varepsilon}(\alpha^{*,\varepsilon},\beta^{*,\varepsilon})                    \nonumber                                                                                 \\
		& =& \sum_{i=1}^{n}\varepsilon^{-1}a_{i}(\alpha_{i}^{*,\varepsilon}+\frac{1}{2}\|x_{i}\|^{2}) +\sum_{j=1}^{m}\varepsilon^{-1}b_{j}(\beta_{j}^{*,\varepsilon}+\frac{1}{2}\|y_{j}\|^{2}).\label{I_epsilon}
	\end{eqnarray}
	Plugging the expansions \eqref{eqn-exp} into the above identity yields
	\begin{align*}
		I^{\varepsilon}(\pi^{*,\varepsilon}) & = -d \ln \varepsilon  +\sum_{i=1}^{n}a_{i}p_{i}^{*,\varepsilon}+\sum_{j=1}^{m}b_{j}q_{j}^{*,\varepsilon} \\
		                                     & = -d \ln \varepsilon + \sum_{i=1}^{n}a_{i}p_{i}^{*,0}+\sum_{j=1}^{m}b_{j}q_{j}^{*,0} + o(1)              \\
		                                     & = -d \ln \varepsilon + H(\pi^{*,0}|\sigma) + o(1).
	\end{align*}
	The second last line follows from the convergence of \((p^{*,\varepsilon},q^{*,\varepsilon})\) to \((p^{*,0},q^{*,0})\) by Remark \ref{rmk-stab}.
\end{proof}

\section{Gradient ascent algorithm}
\label{sec-ga}
In this section, we investigate the gradient ascent method and establish a linear convergence rate.
Let \(\eta\) be a step size which will be specified later.
The gradient ascent algorithm is given iteratively by
\begin{equation}
	\label{eq:gradient-ascent}
	\left\{ \begin{aligned}
		\alpha^{t+1}_{i} & :=\alpha^{t}_{i}+\eta \partial_{\alpha^{t}_{i}}U(\alpha^{t},\beta^{t})= \alpha^{t}_{i} + \eta\biggl(a_{i}-\int_{A_{i}(\alpha^{t})\times \mathcal{Y}}\exp( f(x,\alpha^{t}) +g(y,\beta^{t}))  \tilde{\gamma}(\md x,\md y)\biggr), \\
		\beta_{j}^{t+1}  & :=\beta_{j}^{t}+\eta\partial_{\beta_{j}^{t}}U(\alpha^{t},\beta^{t})=\beta_{j}^{t}+\eta\biggl(b_{j}-\int_{\mathcal{X}\times B_{j}(\beta)^{t}} \exp(f(x,\alpha^{t}) +g(y,\beta^{t}))\tilde{\gamma}(\md x,\md y)\biggr),
	\end{aligned}\right.
\end{equation}
where \(\md \tilde{\gamma} = \exp(-\frac{1}{2}\|x\|^{2}-\frac{1}{2}\|y\|^{2})\md \gamma\).

\begin{asmp}
	\label{asmp-gamma}
	We assume that the measure \(\gamma\) admits a  continuous and bounded density \(\rho\). We define the weighted density \(\tilde{\rho}(x,y) := \exp\bigl(-\frac{1}{2}\|x\|^{2}-\frac{1}{2}\|y\|^{2}\bigr)\rho(x,y)\).
\end{asmp}

\begin{prop}
	\label{prop-gradient}
	Let Assumptions \ref{asmp-semi} and \ref{asmp-gamma} hold.
	For any initial point \((\alpha^{0},\beta^{0})\in \mathbb{R}^{n+m}\), there exists a constant \(\eta_{0}\) such that \(\lim_{t\to\infty}(\alpha^{t},\beta^{t})=(\alpha^{\infty},\beta^{\infty})\) exists for any step size \(0<\eta<\eta_{0}\).
	Moreover, we have \((\alpha^{\infty},\beta^{\infty})=(\alpha^{*},\beta^{*})\).
\end{prop}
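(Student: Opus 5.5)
Our approach is the standard analysis of gradient ascent for a concave function with Lipschitz gradient on a bounded superlevel set. We then use the uniqueness of the maximizer modulo \(\sim_{\oplus}\) from Theorem \ref{thm-semi} to identify the limit.

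\emph{Step 1 (smoothness).} Write \(U=L-F+1\), where \(L\) is linear and
\[
F(\alpha,\beta)=\int\exp(f(x,\alpha)+g(y,\beta))\,\tilde\rho(x,y)\md x\md y .
\]
We show that \(\nabla F\) is locally Lipschitz. The components are \(\partial_{\alpha_i}F=\int_{A_i(\alpha)\times\mathbb{R}^d}e^{f+g}\tilde\rho\). Perturbing \(\alpha\) by \(h\) changes this in two ways. First, the integrand on a fixed cell changes by a factor \(e^{h_i+\cdots}\), which is Lipschitz. Second, the cell \(A_i\) moves. Its boundary moves at speed \(\lesssim|h|/\|x_i-x_k\|\), and the integrand is bounded on compacts by continuity and boundedness of \(\rho\) (Assumption \ref{asmp-gamma}). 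It decays as \(e^{-\|x\|^2/2+O(\|x\|)}\) at infinity. So the mass of the symmetric difference of cells is \(O(|h|)\), uniformly for \((\alpha,\beta)\) in a bounded set modulo \(\sim_\oplus\). Since \(F\) is invariant under \((\mathbf 1,-\mathbf 1)\) shifts, we get a constant \(\Lambda\) such that \(\nabla U\) is \(\Lambda\)-Lipschitz on the set \(Z_{K_0}\), where \(K_0=U(\alpha^0,\beta^0)\). By Step 1 of the proof of Theorem \ref{thm-semi}, \(Z_{K_0}\) projects to a compact set.

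\emph{Step 2 (monotonicity and invariance).} Set \(\eta_0=1/\Lambda\), possibly with \(\Lambda\) computed on a slightly enlarged compact neighborhood. The descent lemma gives
\[
U(z^{t+1})\ge U(z^t)+\eta(1-\Lambda\eta/2)\|\nabla U(z^t)\|^2 .
\]
So the iterates stay in \(Z_{K_0}\) by induction. A continuity argument along the segment is needed so that the step never leaves the region where the Lipschitz bound holds; this is why we enlarge the set. Moreover \(\sum_t\|\nabla U(z^t)\|^2<\infty\), so \(\nabla U(z^t)\to0\).

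\emph{Step 3 (convergence of the full sequence).} We note that \(\nabla U\) sums to zero against \((\mathbf 1,-\mathbf 1)\). Indeed, \(\sum_i a_i=\sum_j b_j\) and \(\sum_i\partial_{\alpha_i}F=\sum_j\partial_{\beta_j}F=F\). Hence \(\sum_i\alpha_i^t-\sum_j\beta_j^t\) is conserved, and the iterates live in a fixed affine slice on which \(Z_{K_0}\) is genuinely compact. By concavity, every accumulation point is a critical point, hence a maximizer. Theorem \ref{thm-semi} says the maximizer is unique in \(\mathbb{R}^{n+m}_\oplus\), so it is a single point in the slice; call it \((\alpha^*,\beta^*)\) with the conserved normalization. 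Therefore the whole bounded sequence converges to \((\alpha^\infty,\beta^\infty)=(\alpha^*,\beta^*)\). Alternatively, standard Fejér monotonicity of \(\|z^t-z^*\|\) for concave \(\Lambda\)-smooth functions with \(\eta<2/\Lambda\) gives the same conclusion directly.

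The main obstacle is Step 1: a quantitative Lipschitz bound on the gradient, i.e. controlling the \(\tilde\gamma\)-weighted mass swept by moving Laguerre cell boundaries. This requires \(x_i\neq x_k\) (a duplicate point can be merged) and the tail control above. Here boundedness and continuity of \(\rho\) are used: they make the \((d-1)\)-dimensional boundary integrals finite, uniformly over the compact set.
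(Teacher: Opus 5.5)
Your proposal is correct and follows essentially the same route as the paper. Both arguments bound the Lipschitz constant of \(\nabla U\) on the superlevel set \(\{U\ge U(\alpha^{0},\beta^{0})\}\) through Laguerre-cell boundary estimates (the paper writes the Hessian explicitly as volume and surface integrals, the infinitesimal form of your swept-mass estimate), then apply the descent lemma with a continuity argument along each step and identify the limit via the uniqueness in Theorem \ref{thm-semi}. Your Step 3 genuinely strengthens the paper's argument. Since \(\nabla U\perp(\mathbf{1},-\mathbf{1})\), the quantity \(\sum_i\alpha_i^{t}-\sum_j\beta_j^{t}\) is conserved, which confines the iterates to a slice on which the superlevel set is compact; together with \(\sum_t\|\nabla U(\alpha^{t},\beta^{t})\|^{2}<\infty\), this gives convergence of the whole sequence in \(\mathbb{R}^{n+m}\), whereas the paper only treats convergent subsequences and concludes \((\alpha^{\infty},\beta^{\infty})\sim_{\oplus}(\alpha^{*},\beta^{*})\).
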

\begin{proof}
	We claim that on the upper level set
	\[
		\{(\alpha,\beta):U(\alpha,\beta)\geq U(\alpha^{0},\beta^{0})\},
	\]
	\(\nabla U\) is \(L\)-Lipschitz for some constant \(L\) depending on the initial point \((\alpha^{0},\beta^{0})\).
	We note that \(A_{i}(\alpha)=\cap_{k\neq i}H_{ik}(\alpha_{i},\alpha_{k})\), where \(H_{ik}(\alpha_{i},\alpha_{k})=\{x:\langle x, x_{i}-x_{k} \rangle+(\alpha_{i}-\alpha_{k})\geq 0\}\) are half-spaces.
	Similarly, we write \(B_{j}(\beta)=\cap_{l\neq j} H_{jl}(\beta_{j},\beta_{l})\) where \(H_{jl}(\beta_{j},\beta_{l})=\{y:\langle y, y_{j}-y_{l} \rangle+(\beta_{j}-\beta_{l})\geq 0\}\).
	In particular, we deduce that for \(k\neq i\) and \(l\neq j\)
	\begin{equation}
		\label{eqn-derivative}
		\left\{
		\begin{aligned}
			\partial_{\alpha_{i}}^{2}U(\alpha,\beta)           & =-\int_{A_{i}(\alpha)\times \mathcal{Y}}\exp(f(x,\alpha) + g(y,\beta)) \tilde{\rho}(x,y)  \md x\md y -\sum_{k\neq i}\partial_{\alpha_{i}\alpha_{k}}^{2}U(\alpha,\beta), \\
			\partial_{\alpha_{i}\alpha_{k}}^{2}U(\alpha,\beta) & = \frac{1}{\|x_{i}-x_{k}\|}\int_{(A_{i}\cap A_{k})(\alpha)\times \mathcal{Y}}  \exp(f(x,\alpha) + g(y,\beta)) \tilde{\rho}(x,y) \md S_{ik}(x) \md y,                    \\
			\partial_{\beta_{j}}^{2}U(\alpha,\beta)            & =-\int_{\mathcal{X}\times B_{j}(\beta)}\exp(f(x,\alpha) + g(y,\beta)) \tilde{\rho}(x,y)\md x\md y - \sum_{l\neq j}\partial^{2}_{\beta_{j}\beta_{l}}U(\alpha,\beta),     \\
			\partial_{\beta_{j}\beta_{l}}^{2}U(\alpha,\beta)   & =\frac{1}{\|y_{j}-y_{l}\|}\int_{\mathcal{X}\times (B_{j}\cap B_{l})(\beta)} \exp(f(x,\alpha) + g(y,\beta)) \tilde{\rho}(x,y)\md x\md S_{jl}(y),                         \\
			\partial_{\alpha_{i}\beta_{j}}^{2}U(\alpha,\beta)  & = - \int_{A_{i}(\alpha)\times B_{j}(\beta)}\exp(f(x,\alpha) + g(y,\beta)) \tilde{\rho}(x,y)\md x\md y,
		\end{aligned}
		\right.
	\end{equation}
	where \(\md S_{ik}(x)\) and \(\md S_{jl}(y)\) denote the surface measures on the hyperplanes \(A_{i}\cap A_{l}\) and \(B_{j}\cap B_{l}\) respectively.
	By Assumption \ref{asmp-gamma}, \(\rho\) is bounded and hence on the upper level set all second derivatives are bounded.
	Therefore, \(\nabla U\) is \(L\)-Lipschitz.

	Now, we take \(\eta_{0}= 2 L^{-1} \).
	Without loss of generality we assume \(\nabla U(\alpha^{0},\beta^{0})\neq 0\).
	We write \((\alpha^{\lambda},\beta^{\lambda})=(\alpha^{0},\beta^{0})+\lambda \eta \nabla U(\alpha^{0},\beta^{0})\) to simplify the notation.
	We claim that \(U(\alpha^{1},\beta^{1})> U(\alpha^{0},\beta^{0})\).
	Otherwise, there exists \(\lambda\in(0,1)\) such that \(U(\alpha^{\lambda},\beta^{\lambda})= U(\alpha^{0},\beta^{0})\).
	Recall that \(\nabla U\) is \(L\)-Lipschitz on the line segment \([(\alpha^{0},\beta^{0}),(\alpha^{\lambda},\beta^{\lambda}) ]\).
	Therefore, we reach a contradiction from the estimate
	\begin{align*}
		0=U(\alpha^{\lambda},\beta^{\lambda})-U(\alpha^{0},\beta^{0})&\geq  \lambda \eta\|\nabla U(\alpha^{0},\beta^{0})\|^{2} -\frac{L}{2}\lambda^{2}\eta^{2}\|\nabla U(\alpha^{0},\beta^{0})\|^{2}>0.	\end{align*}
	By induction, we show that  \(\{U(\alpha^{t},\beta^{t})\}_{t\geq 0}\)  is an increasing sequence.
	Moreover, the limit of any converging subsequence of  \(\{(\alpha^{t},\beta^{t})\}_{t\geq 0}\), \((\alpha^{\infty},\beta^{\infty})\), satisfies \(\nabla U(\alpha^{\infty},\beta^{\infty})=0\).
	Hence, from the uniqueness of the dual optimizer in Theorem \ref{thm-semi}, we deduce 	\((\alpha^{\infty},\beta^{\infty})\sim_{\oplus} (\alpha^{*},\beta^{*})\) and complete the proof.
\end{proof}

In the following proposition, we establish the linear convergence of the gradient ascent algorithm.
\begin{prop}
	\label{prop-ga}
	Let Assumptions \ref{asmp-semi} and
	\ref{asmp-gamma} hold.
	For any initial point \((\alpha^{0},\beta^{0})\in \mathbb{R}^{n+m}\), there exists a constant \(\eta_{0}\)  such that for any step size \(0<\eta<\eta_{0}\) we have the linear convergence 	\begin{equation*}
		\|(\alpha^{t},\beta^{t})-(\alpha^{*},\beta^{*})\|_{l^{2}_{\oplus}}=O(\theta^{t}) \text{ as } t\to\infty
	\end{equation*}
	for some \(\theta\in(0,1)\).
\end{prop}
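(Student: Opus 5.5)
We will prove linear convergence by combining the $L$-smoothness established in the proof of Proposition \ref{prop-gradient} with a \emph{local} strong concavity of $U_{\oplus}$ at the optimizer, in the spirit of \citet{gonzalezsanz2025linear}. Concretely, we will show a Polyak--{\L}ojasiewicz type inequality
\begin{equation*}
	\|\nabla U(\alpha,\beta)\|^{2}\geq 2\kappa\bigl(U(\alpha^{*},\beta^{*})-U(\alpha,\beta)\bigr)
\end{equation*}
on a neighbourhood of $(\alpha^{*},\beta^{*})$ in $\mathbb{R}^{n+m}_{\oplus}$. By Proposition \ref{prop-gradient}, the iterates converge to $(\alpha^{*},\beta^{*})$ in $\mathbb{R}^{n+m}_{\oplus}$. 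Hence after finitely many steps $T$ they enter this neighbourhood and stay there. From then on, the ascent inequality $U(\alpha^{t+1},\beta^{t+1})\geq U(\alpha^{t},\beta^{t})+\eta(1-\tfrac{L\eta}{2})\|\nabla U(\alpha^{t},\beta^{t})\|^{2}$ (used already in Proposition \ref{prop-gradient}) combined with the PL inequality gives geometric decay of the gap $U(\alpha^{*},\beta^{*})-U(\alpha^{t},\beta^{t})$ with ratio $1-2\kappa\eta(1-\tfrac{L\eta}{2})\in(0,1)$. Local strong concavity then gives the quadratic growth $U(\alpha^{*},\beta^{*})-U(\alpha,\beta)\geq \tfrac{\kappa'}{2}\|(\alpha,\beta)-(\alpha^{*},\beta^{*})\|^{2}_{l^{2}_{\oplus}}$. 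This converts the decay of the gap into the claimed rate with $\theta$ equal to the square root of the ratio. Since $\nabla U$ is invariant under the shift $r(\mathbf{1},-\mathbf{1})$ and each step adds a multiple of $\nabla U$, which the $l^{2}_{\oplus}$ norm sees only through $\alpha_i+\beta_j$, everything can be phrased on the quotient.

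The key step is the local strong concavity: the Hessian $\nabla^{2}U(\alpha^{*},\beta^{*})$ from \eqref{eqn-derivative} is negative definite on the complement of the direction $(\mathbf{1},-\mathbf{1})$. Write $w(x,y)=\exp(f(x,\alpha^{*})+g(y,\beta^{*}))\tilde{\rho}(x,y)$. For a direction $(u,v)$, a direct computation from \eqref{eqn-derivative} gives
\begin{equation*}
	-\langle (u,v),\nabla^{2}U\,(u,v)\rangle=\sum_{i,j}(u_{i}+v_{j})^{2}\!\int_{A_{i}\times B_{j}}w
	+\sum_{i<k}\frac{(u_{i}-u_{k})^{2}}{\|x_{i}-x_{k}\|}\!\int_{(A_{i}\cap A_{k})\times\mathbb{R}^{d}}\!w\,\md S_{ik}\md y+\sum_{j<l}(\cdots).
\end{equation*}
This is the second variation of $\int \exp(f+g)\,\md\tilde\gamma$. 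It can be obtained by differentiating the first-order quantities $\int_{A_i\times\mathcal{Y}}w$ and using the structure of $f,g$ as maxima of affine functions. The surface terms are nonnegative.

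In the proof of Theorem \ref{thm-semi} we saw that each $A_{i}(\alpha^{*})\times B_{j}(\beta^{*})$ has nonempty interior. Since $\rho$ is positive (which we assume, as in Assumption \ref{asmp-blowup}), each integral $\int_{A_{i}\times B_{j}}w$ is strictly positive. Therefore the quadratic form vanishes only if $u_{i}+v_{j}=0$ for all $i,j$, that is, only if $(u,v)\sim_{\oplus}0$. This gives negative definiteness on the quotient, with some constant $\kappa'>0$. Note that we only need the first sum; the surface terms merely help.

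The main obstacle is passing from the Hessian at the point to a uniform bound on a neighbourhood. This needs continuity of $\nabla^{2}U$ near $(\alpha^{*},\beta^{*})$, which requires the Laguerre cells and their facets to vary continuously. We will argue that near $\alpha^{*}$ the cells $A_{i}(\alpha)$ are intersections of half-spaces whose offsets move continuously. The integrands are continuous because of the continuity of $\rho$ from Assumption \ref{asmp-gamma}, and they are dominated on the upper level set by the bounds already used for $L$. Dominated convergence then gives continuity of every entry of \eqref{eqn-derivative}. If the surface terms cause trouble, we will drop them and use only the volume terms $\int_{A_i\times B_j}w$, which are continuous in $(\alpha,\beta)$ by dominated convergence alone. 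This yields $-\nabla^{2}U\succeq\kappa$ on the quotient over a small ball. From that we obtain both the PL inequality and the quadratic growth by the standard Taylor argument along segments.
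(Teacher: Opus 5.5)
Your argument is correct, taking for granted (as the paper does) the Hessian formula \eqref{eqn-derivative} and Proposition~\ref{prop-gradient}. It is, however, organised differently from the paper's proof.

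\textbf{The paper's route.} It linearises the error recursion, writing $(u^{t+1},v^{t+1})=\Phi_{t}(u^{t},v^{t})$ with $\Phi_{t}=\Id+\eta\int_{0}^{1}\nabla^{2}U(\alpha^{t,\lambda},\beta^{t,\lambda})\,\md\lambda$. Convergence of the iterates then gives $\Phi_{t}\to\Phi_{*}=\Id+\eta\nabla^{2}U(\alpha^{*},\beta^{*})$ in operator norm. The paper shows $\Phi_{*}$ is a contraction on the quotient for $\eta<\min\{2\delta/M^{2},\eta_{0}\}$. For this it uses the same bound you derive, $(u,v)\nabla^{2}U(u,v)^{\intercal}\leq-\sum_{i,j}(u_{i}+v_{j})^{2}\int_{A_{i}\times B_{j}}w$.

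\textbf{Your route.} You upgrade this pointwise nondegeneracy to strong concavity on a neighbourhood. You then run the Polyak--\L{}ojasiewicz and quadratic-growth argument on the gap $U(\alpha^{*},\beta^{*})-U(\alpha^{t},\beta^{t})$.

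\textbf{What each buys.} The linearisation ties the rate to $\Phi_{*}$ itself, so in principle it can be made asymptotically sharp. But it needs continuity at the optimizer of the full Hessian, including the facet integrals over $A_{i}\cap A_{k}$, to get $\Phi_{t}\to\Phi_{*}$. Your approach has three advantages:
\begin{enumerate}[(i)]
	\item It needs only a uniform lower bound on $-\nabla^{2}U$ near the optimizer. As you note, this follows from continuity of the volume terms alone, because the facet terms have the favourable sign.
	\item It works for every $\eta<2/L$, without the extra restriction $\eta<2\delta/M^{2}$ imposed in the paper's Step~3.
	\item It passes to $\|\cdot\|_{l^{2}_{\oplus}}$ only through norm equivalence at the end. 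The paper's expansion of $\|\Phi_{*}(u,v)\|^{2}_{l^{2}_{\oplus}}$ implicitly identifies the $l^{2}_{\oplus}$ inner product $\sum_{i,j}(u_{i}+v_{j})(u'_{i}+v'_{j})$ with the Euclidean pairing $(u,v)\nabla^{2}U(u,v)^{\intercal}$. Your argument never needs that identification.
\end{enumerate}

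\textbf{Positivity of $\rho$.} You explicitly add positivity of $\rho$, which is not part of Assumption~\ref{asmp-gamma}. The paper's Step~2 relies on it tacitly: $\gamma(A_{i}(\alpha^{*}))>0$ and $\gamma(B_{j}(\beta^{*}))>0$ do not give $\gamma(A_{i}\times B_{j})>0$. Without some such condition the quotient Hessian can genuinely degenerate. For example, if $\rho$ vanishes on $A_{1}\times B_{2}$, on $A_{2}\times B_{1}$, and along the facets, the quadratic form vanishes at $(u,v)=(e_{1},-e_{1})$. So this is a hypothesis both arguments need, not a weakness of yours.
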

\begin{proof}
	\emph{Step 1.}
	We write \((u^{t},v^{t})=(\alpha^{t},\beta^{t})-(\alpha^{*},\beta^{*})\) and \((\alpha^{t,\lambda},\beta^{t,\lambda})=(\alpha^{*},\beta^{*})+\lambda (u^{t},v^{t})\).
	By the fundamental theorem of calculus, we note that \((u^{t+1},v^{t+1})=\Phi_{t}(u^{t},v^{t})\) where \(\Phi_{t}\) is a linear map given by
	\begin{equation*}
		\Phi_{t}(u,v):=\left( \begin{aligned}
				u_{i} +\eta \sum_{k=1}^{n} \biggl(\int_{0}^{1} \partial^{2}_{\alpha_{i}\alpha_{k} }U(\alpha^{t,\lambda},\beta^{t,\lambda})\md \lambda\biggr)  u_{k} +\eta \sum_{l=1}^{m} \biggl(\int_{0}^{1} \partial^{2}_{\alpha_{i}\beta_{l} }U(\alpha^{t,\lambda},\beta^{t,\lambda})\md \lambda\biggr)  v_{l} \\
				v_{j} +\eta \sum_{k=1}^{m} \biggl(\int_{0}^{1} \partial^{2}_{\beta_{j}\alpha_{k} }U(\alpha^{t,\lambda},\beta^{t,\lambda})\md \lambda\biggr)  u_{k} +\eta \sum_{l=1}^{m} \biggl(\int_{0}^{1} \partial^{2}_{\beta_{j}\beta_{l} }U(\alpha^{t,\lambda},\beta^{t,\lambda})\md \lambda\biggr)  v_{l}   \\
			\end{aligned}\right).
	\end{equation*}
	Similarly, we define
	\begin{equation}
		\label{eqn-phi}
		\Phi_{*}(u,v):=\left( \begin{aligned}
				u_{i} +\eta \sum_{k=1}^{n}  \partial^{2}_{\alpha_{i}\alpha_{k} }U(\alpha^{*},\beta^{*}) u_{k} +\eta \sum_{l=1}^{m} \partial^{2}_{\alpha_{i}\beta_{l} }U(\alpha^{*},\beta^{*}) v_{l} \\
				v_{j} +\eta \sum_{k=1}^{m}  \partial^{2}_{\beta_{j}\alpha_{k} }U(\alpha^{*},\beta^{*}) u_{k} +\eta \sum_{l=1}^{m}  \partial^{2}_{\beta_{j}\beta_{l} }U(\alpha^{*},\beta^{*})  v_{l}
			\end{aligned}\right).
	\end{equation}
	By Proposition \ref{prop-gradient}, for sufficiently small step size \(\varepsilon\), we have \(\lim_{t\to\infty}\|(\alpha^{t},\beta^{t})-(\alpha^{*},\beta^{*})\|_{l^{2}_{\oplus}}=0\).
	Therefore, from \eqref{eqn-derivative} the Hessian \(\nabla^{2}U(\alpha^{t,\lambda},\beta^{t,\lambda})\) converges to \(\nabla^{2} U(\alpha^{*},\beta^{*})\) which further implies
	\begin{equation}
		\label{eqn-op}
		\lim_{t\to\infty}\sup_{\|(u,v)\|_{l^{2}_{\oplus}}\leq 1}\|\Phi_{t}(u,v)-\Phi_{*}(u,v)\|_{l^{2}_{\oplus}}=0.
	\end{equation}

	\emph{Step 2.} We claim that there exists \(\delta>0\) and \(M>0\) such that
	\begin{equation*}
		\|\nabla^{2}U(\alpha^{*},\beta^{*})(u,v)^{\intercal}\|_{l^{2}_{\oplus}}\leq M \|(u,v)\|_{l^{2}_{\oplus}}, \quad
		(u,v)\nabla^{2}U(\alpha^{*},\beta^{*})(u,v)^{\intercal}\leq -\delta \|(u,v)\|^{2}_{l^{2}_{\oplus}}.
	\end{equation*}
	The first estimate follows directly from the fact that the second derivatives of \(U\) at \((\alpha^{*},\beta^{*})\) are bounded, and hence \(\nabla^{2}U(\alpha^{*},\beta^{*})\)  is continuous under \(\|\cdot\|_{l^{2}_{\oplus}}\).
	For the second estimate, we compute from \eqref{eqn-derivative} that
	\begin{align*}
		(u,v)\nabla^{2}U(\alpha^{*},\beta^{*})(u,v)^{\intercal} & =\sum_{i=1}^{n}\sum_{j=1}^{m}\partial^{2}_{\alpha_{i}\beta_{j}}U(\alpha^{*},\beta^{*}) (u_{i}+v_{j})^{2}                   \\
		                                                        & - 2\sum_{1\leq i< k\leq n} \partial^{2}_{\alpha_{i}\alpha_{k}}U(\alpha^{*},\beta^{*}) (u_{i}-u_{k})^{2}- 2\sum_{1\leq j< l
		\leq m} \partial^{2}_{\beta_{j}\beta_{l}}U(\alpha^{*},\beta^{*}) (v_{j}-v_{l})^{2}                                                                                                   \\
		                                                        & \leq \sum_{i=1}^{n}\sum_{j=1}^{m}\partial^{2}_{\alpha_{i}\beta_{j}}U(\alpha^{*},\beta^{*}) (u_{i}+v_{j})^{2}.
	\end{align*}
	From the first order condition \eqref{eqn-se}, we notice that \(\gamma(A_{i}(\alpha^{*}))>0\) and \(\gamma(B_{j}(\beta^{*}))>0\) for any \(1\leq i,j\leq n\).
	Therefore,  we have \(\partial^{2}_{\alpha_{i},\beta_{j}}U(\alpha^{*},\beta^{*})\leq -\delta\) for sufficiently small \(\delta\).
	Plugging into the above estimate, we derive the upper bound estimate
	\begin{equation*}
		(u,v)\nabla^{2}U(\alpha^{*},\beta^{*})(u,v)^{\intercal} \leq -\delta\|(u,v)\|^{2}_{l^{2}_{\oplus}}.
	\end{equation*}

	\emph{Step 3.} 	Let \(\eta_{0}\) be the constant in Proposition \ref{prop-gradient}. We take \(\eta < \min\{\frac{2 \delta}{M^{2}},\eta_{0}\}\) and \(\theta_{0} := 1- \eta M^{2}(\frac{2 \delta}{M^{2}}-\eta)\in (0,1)\).
	We notice that
	\begin{equation*}
		\Phi_{*}(u,v)=(\Id +\eta\nabla^{2}U(\alpha^{*},\beta^{*}))(u,v)^{\intercal},
	\end{equation*}
	which yields that
	\begin{align*}
		\|\Phi_{*}(u,v)\|_{l^{2}_{\oplus}}^{2} & = \|(u,v)\|_{l^{2}_{\oplus}}^{2} -2 \eta(u,v)\nabla^{2}U(\alpha^{*},\beta^{*})(u,v)^{\intercal} + \eta^{2} \|\nabla^{2}U(\alpha^{*},\beta^{*})(u,v)^{\intercal}\|_{l^{2}_{\oplus}}^{2} \\
		                                       & \leq (1- 2 \eta \delta + \eta^{2} M^{2}) \|(u,v)\|_{l^{2}_{\oplus}}^{2}=\theta_{0} \|(u,v)\|_{l^{2}_{\oplus}}^{2}.
	\end{align*}
	In particular, \(\Phi_{*}\) is a contraction under \(\|\cdot\|_{l^{2}_{\oplus}}\).
	Since \(\Phi_{t}\) converges to \(\Phi_{*}\) in the operator norm by \eqref{eqn-op}, we have \(\Phi_{t}\) is a contraction for sufficiently large \(t\).
	Therefore, we obtain the linear convergence of  the gradient ascent algorithm, i.e.,
	\begin{equation*}
		\|(\alpha^{t},\beta^{t})-(\alpha^{*},\beta^{*})\|_{l^{2}_{\oplus}}=O(\theta^{t}) \text{ as } t\to\infty,
	\end{equation*}
	for some \(\theta<\theta_{0}\).
\end{proof}

\section{Sinkhorn-type algorithm}
\label{sec-sinkhorn}
In this section, we propose a Sinkhorn-type algorithm to fast compute the solution of the dual problem.
We recall from the proof Theorem \ref{thm-semi}, a sufficient and necessary condition for the dual optimizer is the first order condition \eqref{eqn-se}:
\begin{equation}
	\left\{
	\begin{aligned}
		\partial_{\alpha_{i}}U(\alpha^{*},\beta^{*})=a_{i}-\int_{A_{i}(\alpha^{*})\times \mathcal{Y}}\exp\Bigl(f(x,\alpha^{*})+g(y,\beta^{*})-\frac{1}{2}\|x\|^{2}-\frac{1}{2}\|y\|^{2}\Bigr)\md \gamma & =0, \\
		\partial_{\beta_{j}}U(\alpha^{*},\beta^{*})=b_{j}-\int_{\mathcal{X}\times B_{j}(\beta^{*})}\exp\Bigl(f(x,\alpha^{*})+g(y,\beta^{*})-\frac{1}{2}\|x\|^{2}-\frac{1}{2}\|y\|^{2}\Bigr)\md \gamma   & =0.
	\end{aligned}
	\right.
\end{equation}
In contrast to the classical Sinkhorn algorithm, the above equations do not admit an explicit closed-form iteration and involve the computation of the Laguerre cells \(A_{i}(\alpha)\) and \(B_{j}(\beta)\).
To address this issue, we introduce an approximated dual objective
\begin{equation*}
	\label{eqn-reg}
	U_{\lambda}(\alpha,\beta):=\sum_{i=1}^{n}a_{i}(\alpha_{i}+\frac{1}{2}\|x_{i}\|^{2})+\sum_{j=1}^{m}b_{j}(\beta_{j}+\frac{1}{2}\|y_{j}\|^{2})-\int\exp(\phi_{\lambda}(x,\alpha)+\psi_{\lambda}(y,\beta))\md \tilde{\gamma}+1,
\end{equation*}
where \(\phi_{\lambda}(x,\alpha)=\bigl(\sum_{i=1}^{n}\exp(\lambda \langle x, x_{i} \rangle+\lambda \alpha_{i})\bigr)^{1/\lambda}\) and \(\psi_{\lambda}(y,\beta)=\bigl(\sum_{j=1}^{m}\exp(\lambda \langle y, y_{j} \rangle+\lambda \beta_{j})\bigr)^{1/\lambda}\).
It is clear that \(U_{\lambda}\) converges to \(U\) as \(\lambda\) goes to infinity.
Its associated first order conditions read as
\begin{equation}
	\label{eqn-fo}
	\left\{	\begin{aligned}
		a_{i} & = \int \phi_{\lambda}(x,\alpha)^{1-\lambda} \psi_{\lambda}(y,\beta)  \exp(\lambda \langle x, x_{i} \rangle +  \lambda \alpha_{i}) \tilde{\gamma}(\md x,\md y), \\
		b_{j} & = \int \phi_{\lambda}(x,\alpha) \psi_{\lambda}(y,\beta)^{1-\lambda}  \exp(\lambda \langle y, y_{j} \rangle +  \lambda \beta_{j}) \tilde{\gamma}(\md x,\md y).
	\end{aligned}
	\right.
\end{equation}
We define  map \((\alpha,\beta)\mapsto \Psi(\alpha,\beta)=(\alpha',\beta')\) as
\begin{equation*}
	\left\{	\begin{aligned}
		\alpha_{i}' & :=\frac{1}{\lambda}\log(a_{i})-\frac{1}{\lambda}\log\Biggl(\int \phi_{\lambda}(x,\alpha)^{1-\lambda} \psi_{\lambda}(y,\beta)  \exp(\lambda \langle x, x_{i} \rangle) \tilde{\gamma}(\md x,\md y)\Biggr), \\
		\beta_{j}'  & :=\frac{1}{\lambda}\log(b_{j})-\frac{1}{\lambda}\log\Biggl(\int \phi_{\lambda}(x,\alpha) \psi_{\lambda}(y,\beta)^{1-\lambda}  \exp(\lambda \langle y, y_{j} \rangle) \tilde{\gamma}(\md x,\md y)\Biggr).
	\end{aligned}
	\right.
\end{equation*}
In the next proposition we show that \(\Psi\) has a fixed point.
Recall we write the maximum and minimum of a vector \(v\) as \(\overline{v}=\max_{i}v_{i}\), \(\underline{v}=\min_{i}v_{i}\)
\begin{prop}
	\label{prop-sinkhorn}
	Let Assumption \ref{asmp-semi} hold and \(\lambda>2\).
	There exists \(M>0\) such that  the restriction of \(\Psi\) on \(B_{R}:=\{(\alpha,\beta):-R\leq \underline{\alpha}+\underline{\beta}\leq  \overline{\alpha}+\overline{\beta}\leq R\}\) is a contraction under \(\|\cdot\|_{l^{\infty}_{\oplus}}\) for any \(R\geq M\).
	In particular, there exists \(\theta\in(0,1)\) and a fixed point \((\alpha^{\lambda},\beta^{\lambda})\) of \(\Psi\) such that,  for any \((\alpha,\beta)\in \mathbb{R}^{n+m}\),
	\begin{equation*}
		\|\Psi^{(t)}(\alpha,\beta)-(\alpha^{\lambda},\beta^{\lambda})\|_{l^{\infty}_{\oplus}}= O(\theta^{t}) \text{ as }t\to\infty,
	\end{equation*}
	where \(\Psi^{(t)}\) is the \(t\)-th iteration of \(\Psi\).
\end{prop}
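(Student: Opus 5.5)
The plan is to show three things in turn: \(\Psi\) is well defined on the quotient \(\mathbb{R}^{n+m}_{\oplus}\); \(\Psi\) maps \(B_{R}\) into itself for \(R\) large; and on \(B_{R}\) it strictly contracts \(\|\cdot\|_{l^{\infty}_{\oplus}}\). Banach's fixed point theorem on the complete space \(\proj_{\oplus}(B_{R})\) then gives \((\alpha^{\lambda},\beta^{\lambda})\) and the rate \(\theta^{t}\).

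\emph{Compatibility with \(\sim_{\oplus}\).} This is a direct scaling computation. Since \(\phi_{\lambda}(x,\alpha+r\mathbf{1})=e^{r}\phi_{\lambda}(x,\alpha)\) and \(\psi_{\lambda}(y,\beta-r\mathbf{1})=e^{-r}\psi_{\lambda}(y,\beta)\), the integrand in \(\alpha'_{i}\) is multiplied by \(e^{(1-\lambda)r-r}=e^{-\lambda r}\). Hence \(\alpha'\mapsto\alpha'+r\mathbf{1}\), and symmetrically \(\beta'\mapsto\beta'-r\mathbf{1}\). So \(\Psi\) descends to \(\mathbb{R}^{n+m}_{\oplus}\).

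\emph{Invariance of \(B_{R}\) and entry into it.} I would bound \(\phi_{\lambda}\) between \(f(x,\alpha)\)-type quantities: \(e^{f(x,\alpha)}\leq\phi_{\lambda}(x,\alpha)\leq n^{1/\lambda}e^{f(x,\alpha)}\), and similarly for \(\psi_{\lambda}\). Using \(\overline{\alpha}\) and \(\underline{\alpha}\) to sandwich \(f\), this gives explicit upper and lower bounds on \(\alpha'_{i}+\beta'_{j}\). These bounds have the form \(\frac{\lambda-2}{\lambda}\cdot(\text{extremes of }\alpha+\beta)\) plus constants depending only on \(a,b,x_{i},y_{j},\gamma,\lambda\). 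The constants are finite because \(\tilde{\gamma}\) has Gaussian tails and so integrates \(e^{\lambda\langle x,x_{i}\rangle}\). Since \((\lambda-2)/\lambda<1\), the affine bound \(R\mapsto\frac{\lambda-2}{\lambda}R+C\) maps \([0,R]\) into itself once \(R\geq M:=\frac{\lambda C}{2}\). The same inequality shows that any starting point enters \(B_{R}\) after finitely many iterations, so it suffices to work on \(B_{R}\).

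\emph{Contraction.} Take \((\alpha,\beta)\) and \((\alpha+u,\beta+v)\) in \(B_{R}\). Then
\[
	\log\phi_{\lambda}(x,\alpha+u)-\log\phi_{\lambda}(x,\alpha)=\textstyle\int_{0}^{1}\sum_{k}w_{k}^{s}(x)u_{k}\,\md s,
\]
with softmax weights \(w^{s}(x)\) in the simplex, and similarly for \(\psi_{\lambda}\) with weights \(z^{s}(y)\). Differentiating the log-integral, the increment of \(\alpha'_{i}\) is
\[
	\tfrac{\lambda-1}{\lambda}\,\E_{Q_{i}}\Bigl[\textstyle\sum_{k}w_{k}u_{k}\Bigr]-\tfrac{1}{\lambda}\,\E_{Q_{i}}\Bigl[\textstyle\sum_{l}z_{l}v_{l}\Bigr],
\]
where \(Q_{i}\) is a probability measure (the normalized integrand), averaged over the interpolation parameter \(s\). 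The increment of \(\beta'_{j}\) has the analogous form under a probability measure \(Q'_{j}\).

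Adding the two, \(\Delta(\alpha'_{i}+\beta'_{j})\) is a signed mixture whose weights sum to \(0\) on the direction \((\mathbf{1},-\mathbf{1})\), consistent with the first step. I would regroup it as \(\frac{\lambda-1}{\lambda}\) times a convex combination of terms \(u_{k}+v_{l}\), minus \(\frac{1}{\lambda}\) times another convex combination of terms \(u_{k}+v_{l}\). This regroups the positive part \(\frac{\lambda-1}{\lambda}(\E_{Q_{i}}[w\cdot u]+\E_{Q'_{j}}[z\cdot v])\) against the negative part \(\frac{1}{\lambda}(\E_{Q_{i}}[z\cdot v]+\E_{Q'_{j}}[w\cdot u])\). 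A crude bound then gives the factor \(\frac{\lambda-1}{\lambda}+\frac{1}{\lambda}=1\), which is not a contraction.

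To obtain a factor strictly below \(1\), I would exploit a cancellation together with uniform positivity on \(B_{R}\). On \(B_{R}\), the laws of the weights \(w\) and \(z\) under \(Q_{i}\) and \(Q'_{j}\) are mutually comparable, with densities bounded above and below by constants depending on \(R\) and \(\lambda\). This is where the restriction to \(B_{R}\) enters, and it makes the two convex combinations overlap by a fixed mass \(\kappa>0\). The overlapping parts contribute \(\frac{\lambda-1}{\lambda}-\frac{1}{\lambda}=\frac{\lambda-2}{\lambda}\) times a common quantity, which requires \(\lambda>2\) to be positive. This would bound the Lipschitz constant by some \(\theta=1-\kappa\cdot c(\lambda)<1\).

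The main obstacle is this last step: making the cancellation quantitative, and in particular showing a uniform lower bound on the overlap \(\kappa\) on \(B_{R}\) even though \(\tilde{\gamma}\) has unbounded support. If this proves hard, my fallback is to compute the Jacobian of \(\Psi\) at a point of \(B_{R}\) directly. I would then show that, in the coordinates \((u_{k}+v_{l})_{k,l}\), each row is a signed combination with total variation at most \(\theta<1\). Compactness of \(\proj_{\oplus}(B_{R})\) and continuity of the Jacobian would give a uniform \(\theta\), and the mean value inequality would complete the argument.
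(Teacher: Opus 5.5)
Your linearization matches the paper's Step 2. Along the segment from \((\alpha,\beta)\) to \((\tilde\alpha,\tilde\beta)\), which stays in \(B_{R}\) because \(B_{R}\) is convex, you get \(u'_{i}=(1-\frac{1}{\lambda})\sum_{k}p_{ik}u_{k}-\frac{1}{\lambda}\sum_{l}q_{il}v_{l}\) and \(v'_{j}=-\frac{1}{\lambda}\sum_{k}r_{jk}u_{k}+(1-\frac{1}{\lambda})\sum_{l}s_{jl}v_{l}\), with row-stochastic \(P,Q,R,S\). You differ from the paper in how you extract a factor below one, and your route is sound.

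The paper uses an entrywise lower bound \(1/\eta\) on \(P,Q,R,S\) and bounds \(\overline{u}',\underline{u}',\overline{v}',\underline{v}'\) separately. This gives \(\overline{u}'+\overline{v}'\leq(1-\frac{1}{\lambda}-\frac{1}{\eta})(\overline{u}+\overline{v})+(\frac{1}{\eta}-\frac{1}{\lambda})(\underline{u}+\underline{v})\), hence the factor \(1-2/\max\{\lambda,\eta\}\).

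You instead write \(u'_{i}+v'_{j}=\frac{\lambda-1}{\lambda}\E_{\mu_{1}}[u_{k}+v_{l}]-\frac{1}{\lambda}\E_{\mu_{2}}[u_{k}+v_{l}]\), where \(\mu_{1}=p_{i\cdot}\otimes s_{j\cdot}\) and \(\mu_{2}=r_{j\cdot}\otimes q_{i\cdot}\) live on \(\{1,\dots,n\}\times\{1,\dots,m\}\). Splitting off their common mass \(\kappa\) is a Dobrushin-type bound and gives the factor \(1-2\kappa/\lambda\). This works directly in \(l^{\infty}_{\oplus}\) coordinates and is somewhat cleaner, though the constant is worse (\(\kappa\geq nm/\eta^{2}\)). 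Your route to a universal \(\theta\) is also more direct than the paper's Step 4, where the rate first depends on the starting point: the affine recursion from the invariance step shows every orbit enters a fixed \(B_{R}\), \(R>M\), after finitely many iterations.

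Two points need repair.

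\emph{The overlap bound.} The obstacle you flag is not real, and the justification you sketch is false. Up to normalization, \(\md Q_{i}/\md Q'_{j}\propto w_{i}(x)/z_{j}(y)\), and this ratio is in general unbounded above and below. So neither the measures nor the laws of the weights under them are comparable with bounded densities, and you do not need that. You only need the four probability vectors \(p_{i\cdot},q_{i\cdot},r_{j\cdot},s_{j\cdot}\), averaged along the segment, to have entries bounded below on \(B_{R}\). The maps \((\alpha,\beta)\mapsto\E_{Q_{k}}[w_{i}]\) and their analogues have three properties:
\begin{enumerate}[(i)]
\item they are strictly positive, since \(w_{i}>0\) everywhere;
\item they are continuous by dominated convergence, since the Gaussian factor in \(\tilde{\gamma}\) dominates the exponential-linear growth of the integrands;
\item they are invariant under \(\sim_{\oplus}\), since softmax weights are shift invariant and \(Q_{k}\) is normalized.
\end{enumerate}
Hence they are bounded below by some \(1/\eta\) on the compact set \(\proj_{\oplus}(B_{R})\); the unbounded support of \(\tilde{\gamma}\) plays no role. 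This is exactly the claim in the paper's Step 2, and it yields \(\kappa\geq nm/\eta^{2}\).

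\emph{The fixed point.} Banach's theorem on \(\proj_{\oplus}(B_{R})\) only gives \(\Psi(\alpha^{\lambda},\beta^{\lambda})=(\alpha^{\lambda},\beta^{\lambda})+r(\mathbf{1},-\mathbf{1})\) for some \(r\in\mathbb{R}\). The statement, and the later use of \eqref{eqn-fo}, require a genuine fixed point of \(\Psi\). Unwinding the definition of \(\Psi\) gives
\(a_{i}=e^{\lambda r}\int\phi_{\lambda}^{1-\lambda}\psi_{\lambda}e^{\lambda\langle x,x_{i}\rangle+\lambda\alpha^{\lambda}_{i}}\md\tilde{\gamma}\) and \(b_{j}=e^{-\lambda r}\int\phi_{\lambda}\psi_{\lambda}^{1-\lambda}e^{\lambda\langle y,y_{j}\rangle+\lambda\beta^{\lambda}_{j}}\md\tilde{\gamma}\).
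Sum over \(i\) and \(j\), using \(\sum_{i}e^{\lambda\langle x,x_{i}\rangle+\lambda\alpha_{i}}=\phi_{\lambda}^{\lambda}\) and its analogue for \(\psi_{\lambda}\). This yields \(1=e^{\lambda r}\int\phi_{\lambda}\psi_{\lambda}\md\tilde{\gamma}=e^{-\lambda r}\int\phi_{\lambda}\psi_{\lambda}\md\tilde{\gamma}\), so \(r=0\). This is the last part of the paper's Step 4, and your proof needs it.
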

\begin{proof}
	\emph{Step 1.} We first show that \(\Psi|_{B_{R}}\) maps into \(B_{R}\) for any \(R\geq M\), where we take
	\begin{align*}
		M & =  |\log(\underline{a})| + |\log(\underline{b})|                                                                                                                                                                    \\
		  & \quad + \biggl|\log\biggl(\int \phi_{\lambda}(x,\mathbf{0})^{1-\lambda}\exp( \lambda\inf_{1\leq i \leq n}\langle x, x_{i} \rangle+\inf_{1\leq j \leq m}\langle y, y_{j} \rangle)  \md \tilde{\gamma}\biggr)\biggr|  \\
		  & \quad + \biggl|\log\biggl(\int \psi_{\lambda}(y,\mathbf{0})^{1-\lambda}\exp( \inf_{1\leq i \leq n}\langle x, x_{i} \rangle + \lambda\inf_{1\leq j \leq m}\langle y, y_{j} )\rangle \md \tilde{\gamma}\biggr)\biggr| \\
		  & \quad + \biggl|\log \biggl(\int \psi_{\lambda}(y,\mathbf{0})\exp((1-\lambda)\inf_{1\leq i \leq n}\langle x, x_{i} \rangle+\lambda \sup_{1\leq j \leq m}\langle y, y_{j} \rangle)\md \tilde{\gamma}\biggr)\biggr|    \\
		  & \quad + \biggl|\log \biggl(\int \phi_{\lambda}(x,\mathbf{0})\exp(\lambda\inf_{1\leq i \leq n}\langle x, x_{i} \rangle+(1-\lambda )\sup_{1\leq j \leq m}\langle y, y_{j} \rangle)\md \tilde{\gamma}\biggr)\biggr|.
	\end{align*}
	We notice that
	\begin{equation*}
		\phi_{\lambda}(x,\alpha)\leq \phi_{\lambda}(x,\overline{\alpha}\mathbf{1}) \text{ and } \psi_{\lambda}(y,\beta)\geq \exp(\inf_{1\leq j \leq m}\langle y, y_{j} \rangle+ \overline{\beta}).
	\end{equation*}
	This yields
	\begin{align*}
		\overline{\alpha}' & \leq \frac{1}{\lambda}\log{\overline{a}} -\frac{1}{\lambda} \log\biggl(\int \phi_{\lambda}(x,\overline{\alpha}\mathbf{1})^{1-\lambda}\exp(\overline{\beta}+\inf_{1\leq j \leq m}\langle y, y_{j} \rangle + \lambda\inf_{1\leq i \leq n}\langle x, x_{i} \rangle) \md \tilde{\gamma}\biggr)                   \\
		                   & \leq -\frac{1}{\lambda} \log\biggl(\int \phi_{\lambda}(x,\mathbf{0})^{1-\lambda}\exp(\inf_{1\leq j \leq m}\langle y, y_{j} \rangle + \lambda\inf_{1\leq i \leq n}\langle x, x_{i} \rangle) \md \tilde{\gamma}\biggr) + \Bigl(1-\frac{1}{\lambda}\Bigr) \overline{\alpha} - \frac{1}{\lambda}\overline{\beta} \\
		                   & \leq \frac{1}{\lambda}M + \Bigl(1-\frac{1}{\lambda}\Bigr) \overline{\alpha}- \frac{1}{\lambda}\overline{\beta}.
	\end{align*}
	Similarly, we have \(\overline{\beta}'\leq \frac{1}{\lambda}M -\frac{1}{\lambda}\overline{\alpha}+ (1-\frac{1}{\lambda})\overline{\beta}\).
	Summing up above estimates we derive
	\begin{equation*}
		\overline{\alpha}'+\overline{\beta}'\leq \frac{2}{\lambda}M + \Bigl(1-\frac{2}{\lambda}\Bigr)(\overline{\alpha}+\overline{\beta})\leq R.
	\end{equation*}
	On the other hand, we notice that
	\begin{equation*}
		\phi_{\lambda}(x,\alpha)\geq \exp(\inf_{1\leq i \leq n}\langle x, x_{i} \rangle+\overline{\alpha}) \text{ and } \psi_{\lambda}(y,\beta)\leq \psi_{\lambda}(y,\overline{\beta}\mathbf{1}),
	\end{equation*}
	which implies
	\begin{align*}
		\underline{\alpha}' & \geq \frac{1}{\lambda}\log(\underline{a})- \frac{1}{\lambda}\log\biggl(\int \exp((1-\lambda)\inf_{1\leq i \leq n}\langle x, x_{i} \rangle+(1-\lambda)\overline{\alpha})\psi_{\lambda}(y,\overline{\beta}\mathbf{1})\exp(\lambda\sup_{1\leq j \leq m}\langle y, y_{j} \rangle)\md \tilde{\gamma}\biggr)                                    \\
		                    & \geq \frac{1}{\lambda}\log(\underline{a})- \frac{1}{\lambda}\log\biggl(\int \psi_{\lambda}(y,\mathbf{0})\exp((1-\lambda)\inf_{1\leq i \leq n}\langle x, x_{i} \rangle+\lambda\sup_{1\leq j \leq m}\langle y, y_{j} \rangle)\md \tilde{\gamma}\biggr) + \Bigl(1-\frac{1}{\lambda}\Bigr)\overline{\alpha}-\frac{1}{\lambda}\overline{\beta} \\
		                    & \geq -\frac{1}{\lambda}M + \Bigl(1-\frac{1}{\lambda}\Bigr)\overline{\alpha}-\frac{1}{\lambda}\overline{\beta}.
	\end{align*}
	Similarly, we have \(\underline{\beta}'\geq -\frac{1}{\lambda}M -\frac{1}{\lambda}\overline{\alpha}+(1-\frac{1}{\lambda})\overline{\beta}\).
	Therefore, we show that
	\[
		\underline{\alpha}'+\underline{\beta}'\geq -\frac{2}{\lambda}M+\Bigl(1-\frac{2}{\lambda}\Bigr)(\underline{\alpha}+\underline{\beta})\geq -R.
	\]

	\emph{Step 2.} Let \((\alpha,\beta), (\tilde{\alpha},\tilde{\beta})\in B_{R}\).
	We write \((u,v)= (\tilde{\alpha},\tilde{\beta})-(\alpha,\beta)\) and \((u',v')=\Psi((\tilde{\alpha},\tilde{\beta}))-\Psi((\alpha,\beta))\).
	We claim that there exists \(\eta>0\) such that
	\begin{equation*}
		\begin{pmatrix}
			u' \\
			v'
		\end{pmatrix}
		= \begin{pmatrix}
			P & Q \\
			R & S
		\end{pmatrix}
		\begin{pmatrix}
			u \\
			v,
		\end{pmatrix}
	\end{equation*}
	where \(P,\,Q,\,R,\,S\) are row-stochastic matrices with entries no less than \(\frac{1}{\eta}\).
	Equivalently, it means that
	\begin{equation}
		\label{eqn-uv}
		u'_{k}=\Bigl(1-\frac{1}{\lambda}\Bigr)\sum_{i=1}^{n}p_{ki}u_{i} -\frac{1}{\lambda}\sum_{j=1}^{m}q_{kj}v_{j} \text{ and } v'_{l}=-\frac{1}{\lambda}\sum_{i=1}^{n}r_{li}u_{i}+\Bigl(1-\frac{1}{\lambda}\Bigr)\sum_{j=1}^{m}s_{lj}v_{j},
	\end{equation}
	where \(\sum_{i}^{n}p_{ki}=\sum_{j=1}^{m}q_{kj}=\sum_{i=1}^{n}r_{li}=\sum_{j=1}^{m}s_{lj}=1\) and \(\min_{i,j,k,l}\{p_{ki},q_{kj},r_{li},s_{lj}\}\geq \frac{1}{\eta}\).
	We define
	\begin{equation*}
		p_{ki}(\alpha,\beta)=\frac{\int \phi_{\lambda}(x,\alpha)^{1-2\lambda}\psi_{\lambda}(y,\beta)\exp(\langle x, x_{k} +\lambda x_{i}\rangle + \lambda \alpha_{i})\md \tilde{\gamma}}{\int \phi_{\lambda}(x,\alpha)^{1-\lambda}\psi_{\lambda}(y,\beta)\exp(\langle x, x_{k} \rangle)\md \tilde{\gamma}}>0
	\end{equation*}
	and
	\begin{equation*}
		q_{kj}(\alpha,\beta)= \frac{\int \phi_{\lambda}(x,\alpha)^{1-\lambda}\psi_{\lambda}(y,\beta)^{1-\lambda}\exp(\langle x, x_{k}\rangle+ \lambda \langle y, y_{j} \rangle + \lambda \beta_{j})\md \tilde{\gamma}}{\int \phi_{\lambda}(x,\alpha)^{1-\lambda}\psi_{\lambda}(y,\beta)\exp(\langle x, x_{k} \rangle)\md \tilde{\gamma}}>0.
	\end{equation*}
	By the fundamental theorem of calculus, we have
	\begin{align*}
		u_{k}' & = \Psi_{k}(\tilde{\alpha},\tilde{\beta})- \Psi_{k}(\alpha, \beta)                                                                                                                                                                                    \\
		       & = \int_{0}^{1} \sum_{i=1}^{n} \partial_{\alpha_{i}}\Psi_{k}(\alpha+\varepsilon u,\beta+\varepsilon v) u_{i} +\sum_{j=1}^{m} \partial_{\beta_{j}}\Psi_{k}(\alpha+\varepsilon u,\beta+\varepsilon v) v_{j} \md \varepsilon                             \\
		       & = \Bigl(1-\frac{1}{\lambda}\Bigr)\sum_{i=1}^{n}\int_{0}^{1}p_{ki}(\alpha+\varepsilon u,\beta+\varepsilon v)\md \varepsilon u_{i}- \frac{1}{\lambda} \sum_{j=1}^{m}\int_{0}^{1}q_{kj}(\alpha+\varepsilon u,\beta+\varepsilon v)\md \varepsilon v_{j}.
	\end{align*}
	Hence, we take \(p_{ki}=\int_{0}^{1}p_{ki}(\alpha+\varepsilon u,\beta+\varepsilon v)\md \varepsilon \) and \(q_{kj}=\int_{0}^{1}q_{kj}(\alpha+\varepsilon u,\beta+\varepsilon v)\md \varepsilon \).
	It is direct to verify \(\sum_{i=1}^{n}p_{ki}=\sum_{j=1}^{m}q_{kj}=1\).
	As \(p_{ki}(\cdot)\) and \(q_{kj}(\cdot)\) are bounded away from zero  on \(B_{R}\), we verify \(p_{ki}>\frac{1}{\eta}\) and \(q_{kj}>\frac{1}{\eta}\) for sufficiently large \(\eta\).
	Hence, we show the claim \eqref{eqn-uv} holds for \(u'\), and it also holds for \(v'\) following the same arguments.

	\emph{Step 3.} We show that \(\Psi|_{B_{R}}\) is a contraction under \(\|\cdot\|_{l^{\infty}_{\oplus}}\).
	By \eqref{eqn-uv}, we can bound \(u'\) and \(v'\) by
	\begin{equation*}
		\left\{
		\begin{aligned}
			\overline{u}'  & \leq \Bigl(1-\frac{1}{\lambda}\Bigr)\Bigl(\frac{1}{\eta}\underline{u}+ \Bigl(1-\frac{1}{\eta}\Bigr)\overline{u}\Bigr) -\frac{1}{\lambda}\Bigl(\frac{1}{\eta} \overline{v}+\Bigl(1-\frac{1}{\eta}\Bigr)\underline{v}\Bigr),  \\
			\underline{u}' & \geq \Bigl(1-\frac{1}{\lambda}\Bigr)\Bigl(\frac{1}{\eta} \overline{u}+ \Bigl(1-\frac{1}{\eta}\Bigr)\underline{u}\Bigr) -\frac{1}{\lambda}\Bigl(\frac{1}{\eta} \underline{v}+\Bigl(1-\frac{1}{\eta}\Bigr)\overline{v}\Bigr), \\
			\overline{v}'  & \leq -\frac{1}{\lambda}\Bigl(\frac{1}{\eta}\overline{u}+ \Bigl(1-\frac{1}{\eta}\Bigr)\underline{u}\Bigr) +\Bigl(1-\frac{1}{\lambda}\Bigr)\Bigl(\frac{1}{\eta} \underline{v}+\Bigl(1-\frac{1}{\eta}\Bigr)\overline{v}\Bigr), \\
			\underline{v}' & \geq -\frac{1}{\lambda}\Bigl(\frac{1}{\eta} \underline{u}+ \Bigl(1-\frac{1}{\eta}\Bigr)\overline{u}\Bigr) +\Bigl(1-\frac{1}{\lambda}\Bigr)\Bigl(\frac{1}{\eta}\overline{v}+\Bigl(1-\frac{1}{\eta}\Bigr)\underline{v}\Bigr).
		\end{aligned}
		\right.
	\end{equation*}
	Together with \(\|(u,v)\|_{l^{\infty}_{\oplus}}=\max\{\overline{u}+\overline{v},-\underline{u}-\underline{v}\}\), we obtain
	\begin{equation*}
		\left\{
		\begin{aligned}
			\overline{u}'+\overline{v}'   & \leq \Bigl(1-\frac{1}{\lambda}-\frac{1}{\eta}\Bigr)(\overline{u}+\overline{v})+\Bigl(\frac{1}{\eta}-\frac{1}{\lambda}\Bigr)(\underline{u}+\underline{v})\leq \Bigl(1-\frac{2}{\max\{\lambda,\eta\}}\Bigr)\|(u,v)\|_{l^{\infty}_{\oplus}}   \\
			\underline{u}'+\underline{v}' & \geq \Bigl(1-\frac{1}{\lambda}-\frac{1}{\eta}\Bigr)(\underline{u}+\underline{v})+\Bigl(\frac{1}{\eta}-\frac{1}{\lambda}\Bigr)(\overline{u}+\overline{v})\geq -\Bigl(1-\frac{2}{\max\{\lambda,\eta\}}\Bigr)\|(u,v)\|_{l^{\infty}_{\oplus}}.
		\end{aligned}
		\right.
	\end{equation*}
	Therefore, we deduce
	\begin{align*}
		\|\Psi(\tilde{\alpha},\tilde{\beta})-\Psi(\alpha,\beta)\|_{l^{\infty}_{\oplus}}  =\|(u',v')\|_{l^{\infty}_{\oplus}} & =\max\{\overline{u}'+\overline{v}',-\underline{u}'-\underline{v}'\}                                                    \\
		                                                                                                                    & \leq \Bigl(1-\frac{2}{\max\{\lambda,\eta\}}\Bigr)\|(u,v)\|_{l^{\infty}_{\oplus}}                                       \\
		                                                                                                                    & = \Bigl(1-\frac{2}{\max\{\lambda,\eta\}}\Bigr)\|(\tilde{\alpha},\tilde{\beta})-(\alpha,\beta)\|_{l^{\infty}_{\oplus}}.
	\end{align*}

	\emph{Step 4.} For any \((\alpha,\beta)\), there exists \(R\geq M\) such that \((\alpha,\beta)\in B_{R}\).
	As \(\Psi|_{B_{R}}\) is a contraction, we have \(\Psi^{(t)}(\alpha,\beta)\) converges to a limit \((\alpha^{\lambda},\beta^{\lambda})\) with a linear rate under \(\|\cdot\|_{l^{\infty}_{\oplus}}\).
	We note that the convergence rate depends on \(\eta\) and hence depends on the choice of the initial point \((\alpha,\beta)\).
	However, for any \((\alpha,\beta)\) after sufficiently many steps of iterations, \(\Psi^{(t)}(\alpha,\beta)\) is close to the \((\alpha^{\lambda},\beta^{\lambda})\), so the corresponding  \(\eta\) will be close to the one for \((\alpha^{\lambda},\beta^{\lambda})\).
	Therefore, there exists a universal asymptotic liner convergence rate \(\theta\) for any initial point \((\alpha,\beta)\).
	We further show that \((\alpha^{\lambda},\beta^{\lambda})\) is actually a fixed point of \(\Psi\).
	Since the limit \((\alpha^{\lambda},\beta^{\lambda})\) is a fixed point of the induced map \(\Psi_{\oplus}:\mathbb{R}^{n+m}_{\oplus}\to \mathbb{R}^{n+m}_{\oplus}\),  there exists \(r\in \mathbb{R}\) such that
	\begin{equation*}
		\Psi(\alpha^{\lambda},\beta^{\lambda})=(\alpha^{\lambda},\beta^{\lambda})+r(\mathbf{1},-\mathbf{1}).
	\end{equation*}
	This implies
	\begin{equation*}
		\left\{
		\begin{aligned}
			1=\sum_{i=1}^{n}a_{i}=\exp(r)\int \phi_{\lambda}(x,\alpha^{\lambda})\psi_{\lambda}(y,\beta^{\lambda}) \md \tilde{\gamma}, \\
			1=\sum_{j=1}^{m}b_{i}=\exp(-r)\int \phi_{\lambda}(x,\alpha^{\lambda})\psi_{\lambda}(y,\beta^{\lambda}) \md \tilde{\gamma}.
		\end{aligned}
		\right.
	\end{equation*}
	Therefore, we obtain \(r=0\) and \((\alpha^{\lambda},\beta^{\lambda})\) is a fixed point of \(\Psi\).
\end{proof}

\begin{prop}
	Let Assumption \ref{asmp-semi} hold.
	The regularized problem \eqref{eqn-reg} has a unique optimizer \((\alpha^{\lambda},\beta^{\lambda})\) in \(\mathbb{R}^{n+m}_{\oplus}\).
	Moreover, we have \(\lim_{\lambda\to \infty}\|(\alpha^{\lambda},\beta^{\lambda})- (\alpha^{*},\beta^{*})\|_{l^{\infty}_{\oplus}}=0\).
\end{prop}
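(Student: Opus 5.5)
We split the argument into three parts: existence and uniqueness for fixed \(\lambda\), a priori bounds uniform in \(\lambda\), and identification of the limit.

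\emph{Existence and uniqueness for fixed \(\lambda\).} For fixed \(\lambda\), \(U_{\lambda}\) is concave. Indeed, \((\alpha,\beta)\mapsto \log\phi_{\lambda}(x,\alpha)+\log\psi_{\lambda}(y,\beta)\) is a sum of log-sum-exp functions, hence convex, so the map \((\alpha,\beta)\mapsto\exp(\cdot)\) of it is convex. Unlike \(U\), the function \(U_{\lambda}\) is smooth, and we expect it to be strictly concave on the quotient \(\mathbb{R}^{n+m}_{\oplus}\). To see this, note that a direction \((u,v)\) along which \(U_{\lambda}\) is affine must make \(\phi_{\lambda}(x,\alpha+tu)\psi_{\lambda}(y,\beta+tv)\) affine-exponential in \(t\) on a set of positive \(\tilde\gamma\)-measure. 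This forces \(u\) and \(v\) to be constant vectors, since log-sum-exp is strictly convex off the direction \(\mathbf{1}\). The integral term then equals \(e^{t(u_{1}+v_{1})}\) times a constant, whose second derivative vanishes only if \(u_{1}+v_{1}=0\), i.e. \((u,v)\sim_{\oplus}0\).

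Existence follows from a level-set argument as in Step 1 of Theorem \ref{thm-semi}. Using the bounds \(\phi_{\lambda}(x,\alpha)\geq \exp(\inf_{i}\langle x,x_{i}\rangle+\overline{\alpha})\) and \(\phi_{\lambda}\ge \exp(f)\), the super-level sets of \(U_{\lambda}\) lie in some \(B_{R}\). Alternatively, we can invoke Proposition \ref{prop-sinkhorn}: its fixed point \((\alpha^{\lambda},\beta^{\lambda})\) satisfies \eqref{eqn-fo}, and \eqref{eqn-fo} is exactly \(\nabla U_{\lambda}=0\), so by concavity it is a maximizer. Uniqueness in \(\mathbb{R}^{n+m}_{\oplus}\) then follows from strict concavity on the quotient.

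\emph{Uniform bounds in \(\lambda\).} The key estimate is that \(\{(\alpha^{\lambda},\beta^{\lambda})\}_{\lambda\geq 3}\) lies in a fixed \(B_{R}\). We use \(\exp(f(x,\alpha))\leq\phi_{\lambda}(x,\alpha)\leq n^{1/\lambda}\exp(f(x,\alpha))\), and similarly for \(\psi_{\lambda}\) and \(g\). These give
\[
U(\alpha,\beta)-(nm)^{1/3}\!\int e^{f+g}\md\tilde\gamma+\int e^{f+g}\md\tilde\gamma\;\le\;U_{\lambda}(\alpha,\beta)\;\le\;U(\alpha,\beta),
\]
which yields \(|U_{\lambda}-U|\leq (n^{1/\lambda}m^{1/\lambda}-1)\int e^{f+g}\md\tilde\gamma\). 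First, \(U_{\lambda}(\alpha^{\lambda},\beta^{\lambda})\ge U_{\lambda}(\alpha^{*},\beta^{*})\ge U(\alpha^*,\beta^*)-C\). Second, the upper bound \(U_{\lambda}\le U\) combined with the coercivity estimates of Step 1 in Theorem \ref{thm-semi}, which bound \(\overline\alpha+\overline\beta\) from above and \(\underline\alpha+\underline\beta\) from below, gives a uniform \(R\). A further consequence is uniform convergence \(U_{\lambda}\to U\) on \(B_{R}\), because \(\int e^{f+g}\md\tilde\gamma\) is bounded there.

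\emph{Identification of the limit.} Take any subsequential limit \((\hat\alpha,\hat\beta)\) in \(\mathbb{R}^{n+m}_{\oplus}\), which exists by compactness of \(\proj_{\oplus}B_{R}\). For any \((\alpha,\beta)\),
\[
U(\hat\alpha,\hat\beta)=\lim U_{\lambda}(\alpha^{\lambda},\beta^{\lambda})\ge\lim U_{\lambda}(\alpha,\beta)=U(\alpha,\beta).
\]
Here the first equality uses uniform convergence on \(B_{R}\) together with continuity of \(U\). Hence \((\hat\alpha,\hat\beta)\) maximizes \(U\), and by the uniqueness in Theorem \ref{thm-semi} it coincides with \((\alpha^{*},\beta^{*})\) in \(\mathbb{R}^{n+m}_{\oplus}\). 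Since every subsequence has this same limit, we obtain convergence in \(l^{\infty}_{\oplus}\).

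The main obstacle is the uniform-in-\(\lambda\) compactness together with the strict-concavity argument on the quotient. The compactness step requires care to ensure the \(\lambda\)-dependent constants (the factors \(n^{1/\lambda}\)) stay bounded, which holds once we restrict to \(\lambda\ge 3\).
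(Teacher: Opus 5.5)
Your proof is correct, but it takes a different route from the paper in two places.

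\emph{Uniqueness at fixed \(\lambda\).} The paper makes no concavity argument. It notes that the first-order conditions \eqref{eqn-fo} are exactly the fixed-point equation of the Sinkhorn map \(\Psi\). It then invokes the \(l^{\infty}_{\oplus}\)-contraction of \(\Psi\) on each \(B_R\) from Proposition~\ref{prop-sinkhorn}. You instead prove strict concavity of \(U_\lambda\) on \(\mathbb{R}^{n+m}_{\oplus}\) directly: strict convexity of log-sum-exp off \(\mathbf{1}\), followed by the \(e^{t(u_1+v_1)}\) step. Your argument is self-contained, holds for every \(\lambda>0\) rather than only \(\lambda>2\), and does not depend on the more delicate contraction estimates.

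\emph{The limit \(\lambda\to\infty\).} The compactness step is essentially the same in both. The paper uses \(U_\lambda\le U\) and the monotone convergence \(\phi_\lambda\searrow e^{f}\) at \((\alpha^*,\beta^*)\); you use the sandwich \(e^{f}\le\phi_\lambda\le n^{1/\lambda}e^{f}\). The identification of the limit differs. The paper passes to the limit in \eqref{eqn-fo}: the softmax weights converge to the Laguerre-cell indicators, so any cluster point satisfies \eqref{eqn-se}. You use stability of maximizers under the uniform bound \(\sup_{B_R}|U_\lambda-U|\le((nm)^{1/\lambda}-1)C_R\to 0\).

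\emph{Trade-offs.} Your route avoids the dominated-convergence analysis near cell boundaries and gives a quantitative \(O(\log(nm)/\lambda)\) rate for the optimal values. The paper's route ties the optimizer to the fixed point the algorithm actually computes, and it shows explicitly how the Laguerre-cell optimality conditions are recovered.

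You also read the integrand of \(U_\lambda\) as \(\phi_\lambda\psi_\lambda\) rather than \(\exp(\phi_\lambda+\psi_\lambda)\), as printed in the paper's definition. Your reading is the correct one, since it is the one consistent with \eqref{eqn-fo}.
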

\begin{proof}
	By Proposition \ref{prop-sinkhorn}, for any \(R\geq M\), \(\Psi|_{B_{R}}\) is a contraction in \(\|\cdot\|_{l^{\infty}_{\oplus}}\), which implies the uniqueness of the optimizer.
	Since \(\phi_{\lambda}(x,\alpha) \searrow f(x,\alpha)\) and \(\psi_{\lambda}(y,\beta) \searrow g(y,\beta)\) as \(\lambda\) goes to infinity,
	we have
	\(\lim_{\lambda\to\infty }U_{\lambda}(\alpha^{*},\beta^{*})=U(\alpha^{*},\beta^{*})\) from the monotone convergence theorem.
	Together with \(U(\alpha^{\lambda},\beta^{\lambda})\geq U_{\lambda}(\alpha^{*},\beta^{*})\), we deduce that \(\{(\alpha^{\lambda},\beta^{\lambda})\}_{\lambda\geq 2}\) is contained in an upper level set of \(U\), and hence it forms a precompact set in \(\mathbb{R}^{n+m}_{\oplus}\) following the same argument in \emph{Step 1} of the proof of Theorem~\ref{thm-semi}.
	Recall \((\alpha^{\lambda},\beta^{\lambda})\) satisfies the first order conditions
	\begin{equation*}
		\left\{	\begin{aligned}
			a_{i} & = \int \phi_{\lambda}(x,\alpha^{\lambda})^{1-\lambda} \psi_{\lambda}(y,\beta^{\lambda})  \exp(\lambda \langle x, x_{i} \rangle +  \lambda \alpha_{i}^{\lambda}) \tilde{\gamma}(\md x,\md y), \\
			b_{j} & = \int \phi_{\lambda}(x,\alpha^{\lambda}) \psi_{\lambda}(y,\beta^{\lambda})^{1-\lambda}  \exp(\lambda \langle y, y_{j} \rangle +  \lambda \beta_{j}^{\lambda}) \tilde{\gamma}(\md x,\md y).
		\end{aligned}
		\right.
	\end{equation*}
	Therefore, for any converging subsequence of \(\{(\alpha^{\lambda},\beta^{\lambda})\}_{\lambda\geq 2}\), its limit \((\alpha^{\infty},\beta^{\infty})\) satisfies
	\begin{equation*}
		\left\{	\begin{aligned}
			a_{i} & = \int_{A_{i}(\alpha^{\infty})\times \mathcal{Y}} \exp(f(x,\alpha^{\infty}) +g(y,\beta^{\infty}))  \tilde{\gamma}(\md x,\md y), \\
			b_{j} & = \int_{\mathcal{X}\times B_{j}(\beta^{\infty})}\exp( f(x,\alpha^{\infty}) +g(y,\beta^{\infty}))\tilde{\gamma}(\md x,\md y),
		\end{aligned}
		\right.
	\end{equation*}
	which further implies \((\alpha^{\infty},\beta^{\infty})\) is an optimizer of the dual problem.
	By the uniqueness of the dual problem in Theorem \ref{thm-semi}, we must have \((\alpha^{\infty},\beta^{\infty})\sim_{\oplus}(\alpha^{*},\beta^{*})\).
	Hence, we deduce \(\lim_{\lambda\to\infty}\|(\alpha^{\lambda},\beta^{\lambda})-(\alpha^{*},\beta^{*})\|_{l^{\infty}_{\oplus}}=0\).
\end{proof}

\section{Numerical experiments}
\label{sec-exam}
In this section, we numerically verify the blow-up phenomenon in Theorem \ref{thm-blowup}, and numerically verify the linear convergence rate of both gradient ascent method and sinkhorn method.

Throughout this section, we work with a synthetic experiment setting in dimension
$d=2$ and consider discrete marginals with $n=10$ and $m=10$. The support points are generated once and then fixed throughout: $x_i$ are sampled uniformly from $[-1,1]^2$ and $y_i$ are sampled uniformly from $[-2,1]^2$. The reference coupling $\gamma$ is correlated Gaussian defined by $(X,Y)\sim \gamma$ with $X=\mu_0+Z_0$, $Y=\mu_1+\rho Z_0+\sqrt{1-\rho^2}Z_1$, $\mu_0=(1,-0.5)$, $\mu_1=(-1,0.8)$, $\rho=-0.4$, $Z_0,Z_1\sim N(0,I_d)$ independently.

\subsection{Blow-up phenomenon}

\paragraph{Set-up.}
For each $\varepsilon\in\{1,4^{-1},\dots,4^{-7}\}$, we approximately solve the dual problem $U_\varepsilon$ in \eqref{eqn-dual-blowup} using gradient ascent, where the integrals against $\gamma$ are evaluated by Monte Carlo sampling with batch size $8000$. To achieve high accuracy when $\varepsilon$ is small, we run $T=1000$ iterations for each $\varepsilon$. We then compute $I_\varepsilon(\pi^{*,\varepsilon})$ from the final dual iterates using the identity \eqref{I_epsilon} in the proof of Theorem~\ref{thm-blowup}.

\begin{figure}[H]
	\centering
	\includegraphics[width=0.5\linewidth]{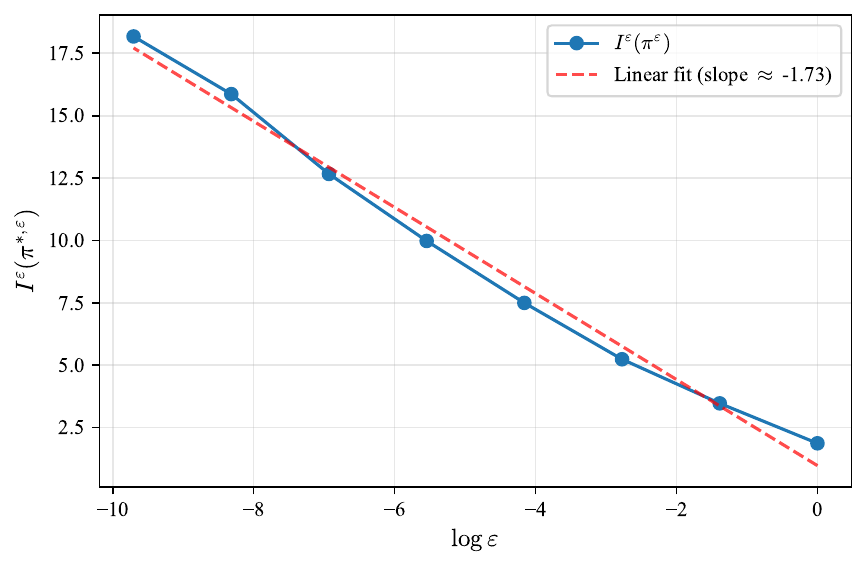}
	\caption{Scaling of $I^\varepsilon(\pi^{*,\varepsilon})$ with respect to $\varepsilon$.}
	\label{fig:I_eps}
\end{figure}
\paragraph{Results.} Figure~\ref{fig:I_eps} plots $I_\varepsilon(\pi^{*,\varepsilon})$ against $\log \varepsilon$. The curve is close to linear, confirming the logarithmic blow-up stated in Theorem~\ref{thm-blowup}.
In particular, in our $d=2$ experiment, the slope with respect to $\log(\varepsilon)$ is close to $-2$, confirming the coefficient $-d$ in the leading term in Theorem~\ref{thm-blowup}.

\subsection{Gradient ascent method}
\label{subsec:gradient-ascent}

\paragraph{Set-up.} %
We perform the gradient ascent algorithm \eqref{eq:gradient-ascent} with learning rate $\eta = 0.01$ and run the update for $T=2000$ iterations.

\paragraph{Results.} We visualize the convergence of $\|(\alpha^t,\beta^t)-(\alpha^T,\beta^T)\|_{l_{\oplus}^2}$ across iterations $t$, using them as proxies for $\|(\alpha^t,\beta^t)-(\alpha^*,\beta^*)\|_{l_{\oplus}^2}$.
The log-scale plots of the $y$-axis exhibit a clear linear decay, indicating a linear convergence rate.
This behavior is consistent with the convergence rate established in Proposition~\ref{prop-ga}.

\begin{figure}[H]
	\centering
	\includegraphics[width=0.5\linewidth]{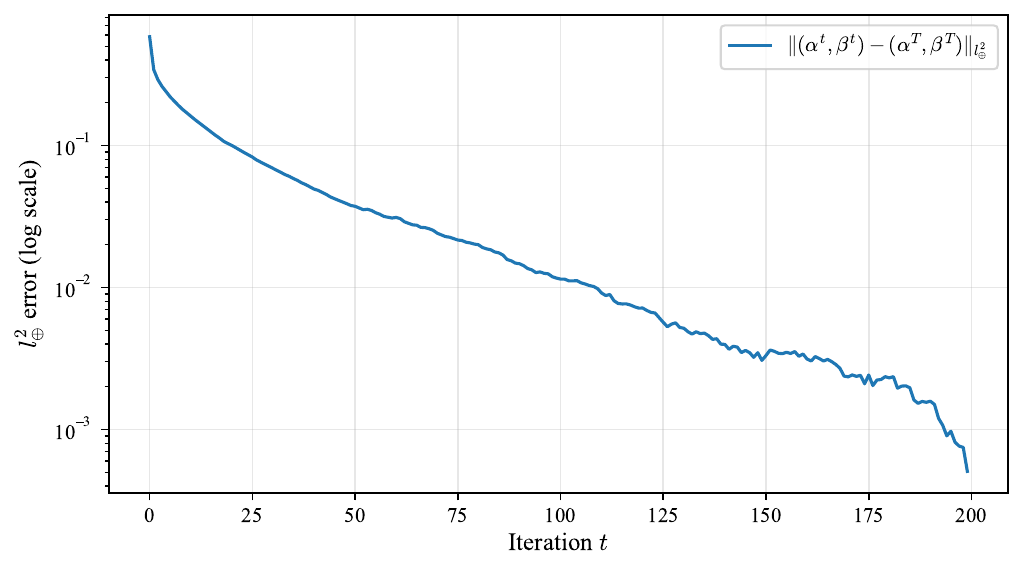}
	\caption{Convergence of $\|(\alpha^t,\beta^t)-(\alpha^T,\beta^T)\|_{l_{\oplus}^2}$.}
	\label{fig:GA_theta_convergence}
\end{figure}

Beyond convergence, we visualize the geometric structure induced by the learned dual variables. Given the final iterates $(\alpha^T,\beta^T)$, we consider the associated max-affine potentials
\begin{equation*}
	f_{\alpha^T}(x) = \max_{1\leq i\leq n}\{ \langle x,x_i \rangle +\alpha_i^T\}, \quad g_{\beta^T}(y) = \max_{1\leq j\leq m}\{ \langle y,y_j \rangle +\beta_j^T\},
\end{equation*}
which induce polyhedral partitions of the $x-$ and $y-$ spaces.

Using a fine grid over $[-3,3]^2$, we compute the active affine index at each grid point and visualize the resulting partitions together with samples drawn from the reference measure
$\gamma$.
The resulting Figure \ref{fig:GA_partition} shows well-defined regions of dominance for each affine component, with empirical mass concentrating in a subset of active cells.

\begin{figure}[H]
	\centering
	\includegraphics[width=0.9\linewidth]{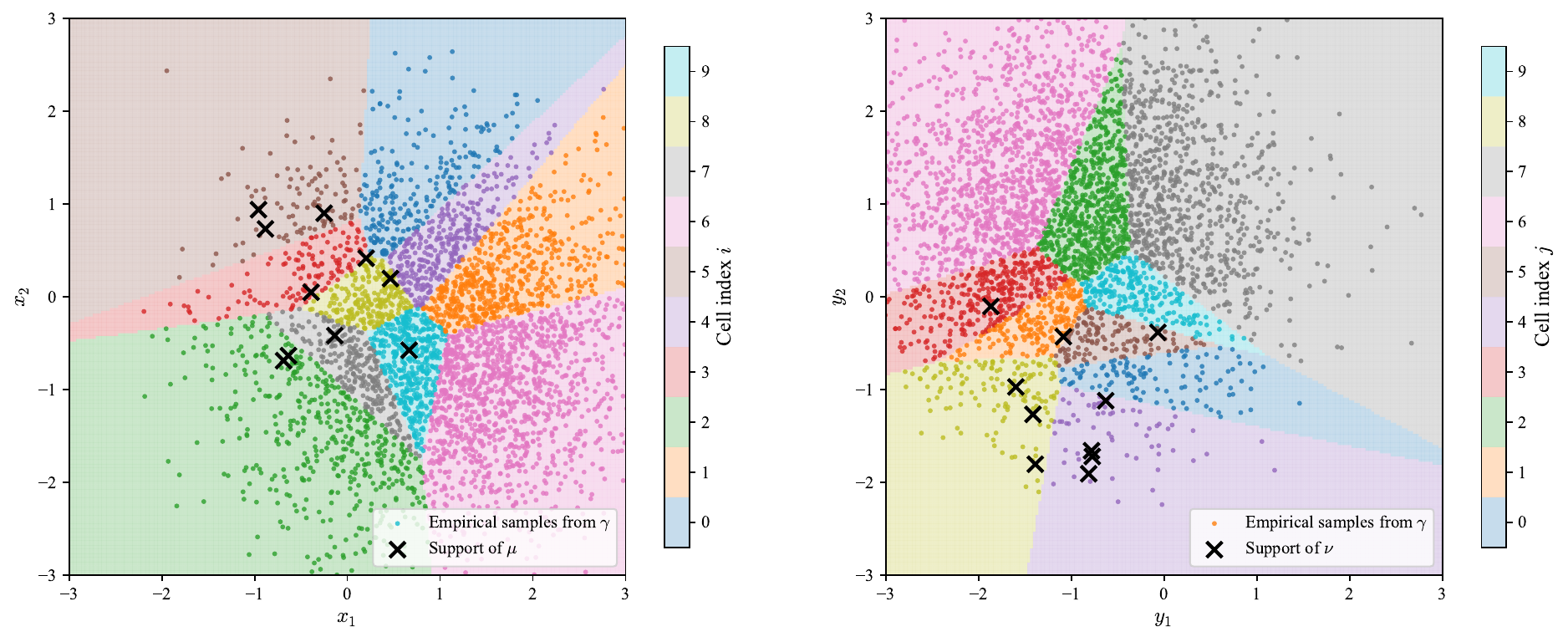}
	\caption{Space partition and empirical mass.}\label{fig:GA_partition}
\end{figure}
Figure \ref{fig:GA_partition} illustrates that the final dual iterates define clear max-affine (polyhedral) partitions of the
$x$- and $y$-spaces; the overlaid samples concentrate in only a subset of cells, while other affine pieces are active only in low-density regions and therefore have negligible influence on the learned solution.

\subsection{Sinkhorn-type method}
\paragraph{Set-up.} Note that we fix the  same data set as in Section \ref{subsec:gradient-ascent} for the comparison of state space partition. We take the regularization parameter as $\lambda=10$ and run the algorithm for $T=500$ iterations.

\paragraph{Results.} Similar as Section \ref{subsec:gradient-ascent}, we visualize the convergence of $\|(\alpha^t,\beta^t)-(\alpha^T,\beta^T)\|_{l_{\oplus}^\infty}$ across iterations $t$, using them as proxies for $\|(\alpha^t,\beta^t)-(\alpha^*,\beta^*)\|_{l_{\oplus}^\infty}$.
The log-scale plots of the $y$-axis exhibit a clear linear decay for iterations before $400$, indicating a linear convergence rate.
This behavior is consistent with the convergence rate established in Proposition~\ref{prop-sinkhorn}.
We believe that after 30 steps the fluctuation from the Monte Carlo sampling dominates the algorithm convergence.

\begin{figure}[H]
	\centering
	\includegraphics[width=0.5\linewidth]{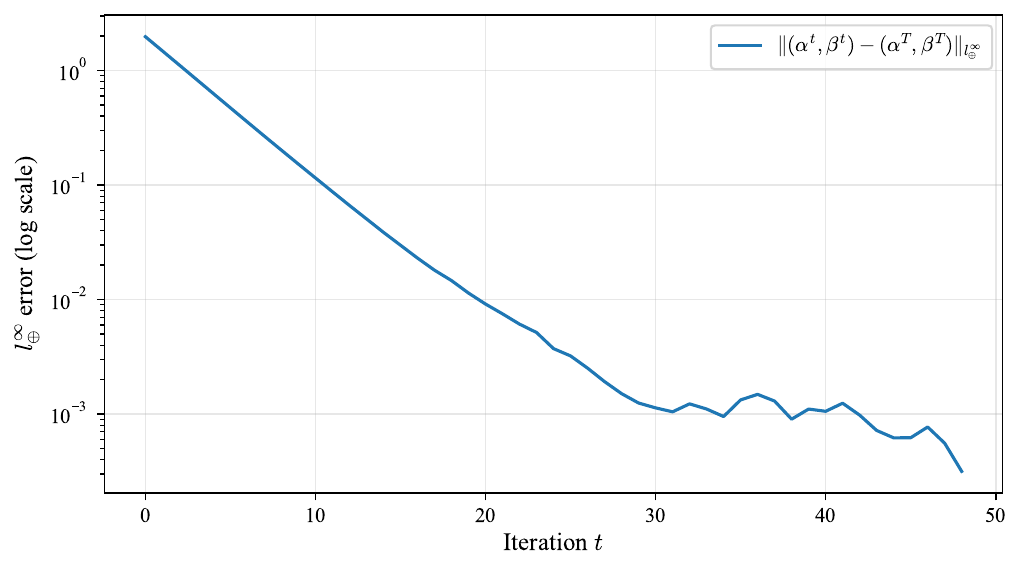}
	\caption{Convergence of $\|(\alpha^t,\beta^t)-(\alpha^T,\beta^T)\|_{l_{\oplus}^\infty}$.}
	\label{fig:SIN_theta_convergence}
\end{figure}

Despite the presence of an regularization term in the Sinkhorn formulation, the resulting partitions (see Figure \ref{fig:SIN_partition}) closely track those obtained from the unregularized gradient ascent method (see Figure \ref{fig:GA_partition}). In particular, even with a finite regularization parameter $\lambda$, the max-affine structure induced by the Sinkhorn dual variables  yields cell boundaries that are nearly identical to those of the limiting unregularized solution. This indicates that the entropic smoothing primarily stabilizes the optimization and accelerates convergence, while introducing only negligible geometric bias in the learned partition. The close agreement between the two methods suggests that Sinkhorn regularization preserves the essential structural features of the optimal dual potentials, making it a promising and computationally robust alternative to unregularized gradient ascent for recovering interpretable space partitions.

\begin{figure}[H]
	\centering
	\includegraphics[width=0.9\linewidth]{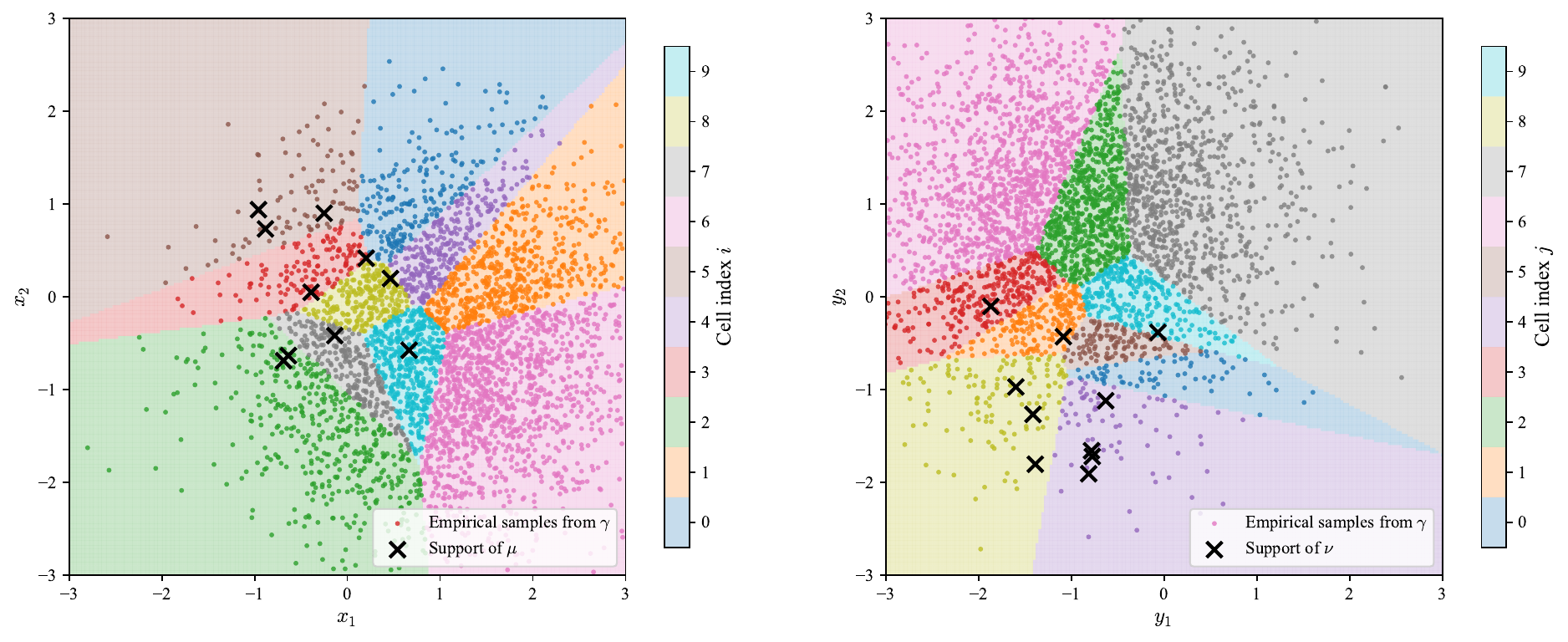}
	\caption{Space partition and empirical mass.}\label{fig:SIN_partition}
\end{figure}

\bibliography{reference}

\begin{thebibliography}{46}
\providecommand{\natexlab}[1]{#1}
\providecommand{\url}[1]{\texttt{#1}}
\expandafter\ifx\csname urlstyle\endcsname\relax
  \providecommand{\doi}[1]{doi: #1}\else
  \providecommand{\doi}{doi: \begingroup \urlstyle{rm}\Url}\fi

\bibitem[Alouadi et~al.(2026)Alouadi, Henry-Labordère, Loeper, Mazhar, Pham, and Touzi]{alouadi2026lightsbb}
A.~Alouadi, P.~Henry-Labordère, G.~Loeper, O.~Mazhar, H.~Pham, and N.~Touzi.
\newblock {LightSBB-M}: Bridging {S}chr{\"o}dinger and bass for generative diffusion modeling, 2026.
\newblock arXiv:2601.19312.

\bibitem[Aryan and Ghosal(2025)]{aryan25entropic}
S.~Aryan and P.~Ghosal.
\newblock Entropic selection principle for {{Monge}}'s optimal transport, Aug. 2025.
\newblock arXiv:2502.16370.

\bibitem[Bai et~al.(2023)Bai, He, Jiang, and Oblój]{bai23wasserstein}
X.~Bai, G.~He, Y.~Jiang, and J.~Oblój.
\newblock Wasserstein distributional robustness of neural networks.
\newblock \emph{Advances in Neural Information Processing Systems}, 36, 2023.

\bibitem[Bai et~al.(2025)Bai, He, Jiang, and Oblój]{bai25wasserstein}
X.~Bai, G.~He, Y.~Jiang, and J.~Oblój.
\newblock Wasserstein distributional adversarial training for deep neural networks, Feb. 2025.
\newblock arXiv:2502.09352.

\bibitem[Bartl and Wiesel(2023)]{bartl23sensitivity}
D.~Bartl and J.~Wiesel.
\newblock Sensitivity of multiperiod optimization problems with respect to the adapted {Wasserstein} distance.
\newblock \emph{SIAM Journal on Financial Mathematics}, 14\penalty0 (2):\penalty0 704--720, June 2023.

\bibitem[Bartl et~al.(2021)Bartl, Drapeau, Obłój, and Wiesel]{bartl21sensitivity}
D.~Bartl, S.~Drapeau, J.~Obłój, and J.~Wiesel.
\newblock Sensitivity analysis of {{Wasserstein}} distributionally robust optimization problems.
\newblock \emph{Proceedings of the Royal Society A: Mathematical, Physical and Engineering Sciences}, 477\penalty0 (2256):\penalty0 20210176, Dec. 2021.

\bibitem[Bertsekas and Shreve(1996)]{bertsekas1996stochastic}
D.~Bertsekas and S.~E. Shreve.
\newblock \emph{Stochastic {{Optimal Control}}: {{The Discrete-Time Case}}}.
\newblock {Athena Scientific}, Dec. 1996.

\bibitem[Blanchet and Murthy(2019)]{blanchet19quantifying}
J.~Blanchet and K.~Murthy.
\newblock Quantifying distributional model risk via optimal transport.
\newblock \emph{Mathematics of Operations Research}, 44\penalty0 (2):\penalty0 565--600, May 2019.

\bibitem[Carlier et~al.(2017)Carlier, Duval, Peyr{\'e}, and Schmitzer]{carlier2017convergence}
G.~Carlier, V.~Duval, G.~Peyr{\'e}, and B.~Schmitzer.
\newblock Convergence of entropic schemes for optimal transport and gradient flows.
\newblock \emph{SIAM Journal on Mathematical Analysis}, 49\penalty0 (2):\penalty0 1385--1418, 2017.

\bibitem[Chen et~al.(2023{\natexlab{a}})Chen, Huang, Zhao, and Wang]{chen2023score}
M.~Chen, K.~Huang, T.~Zhao, and M.~Wang.
\newblock Score approximation, estimation and distribution recovery of diffusion models on low-dimensional data.
\newblock In \emph{{International Conference on Machine Learning}}, pages 4672--4712. PMLR, 2023{\natexlab{a}}.

\bibitem[Chen et~al.(2024)Chen, Mei, Fan, and Wang]{chen2024overview}
M.~Chen, S.~Mei, J.~Fan, and M.~Wang.
\newblock An overview of diffusion models: Applications, guided generation, statistical rates and optimization, 2024.
\newblock arXiv:2404.07771.

\bibitem[Chen et~al.(2023{\natexlab{b}})Chen, Liu, and Theodorou]{chen2021likelihood}
T.~Chen, G.-H. Liu, and E.~A. Theodorou.
\newblock Likelihood training of {Schr\"odinger} bridge using forward-backward {SDEs} theory, 2023{\natexlab{b}}.
\newblock arXiv:2110.11291.

\bibitem[Chen et~al.(2016)Chen, Georgiou, and Pavon]{chen2016relation}
Y.~Chen, T.~T. Georgiou, and M.~Pavon.
\newblock On the relation between optimal transport and {Schr\"odinger} bridges: A stochastic control viewpoint.
\newblock \emph{Journal of Optimization Theory and Applications}, 169\penalty0 (2):\penalty0 671--691, 2016.

\bibitem[Cuturi(2013)]{cuturi2013sinkhorn}
M.~Cuturi.
\newblock Sinkhorn distances: Lightspeed computation of optimal transport.
\newblock \emph{Advances in Neural Information Processing Systems}, 26, 2013.

\bibitem[Dai~Pra(1991)]{dai1991stochastic}
P.~Dai~Pra.
\newblock A stochastic control approach to reciprocal diffusion processes.
\newblock \emph{Applied Mathematics and Optimization}, 23\penalty0 (1):\penalty0 313--329, 1991.

\bibitem[De~Bortoli et~al.(2021)De~Bortoli, Thornton, Heng, and Doucet]{de2021diffusion}
V.~De~Bortoli, J.~Thornton, J.~Heng, and A.~Doucet.
\newblock Diffusion {S}chr{\"o}dinger bridge with applications to score-based generative modeling.
\newblock \emph{Advances in Neural Information Processing Systems}, 34:\penalty0 17695--17709, 2021.

\bibitem[Deming and Stephan(1940)]{deming1940least}
W.~E. Deming and F.~F. Stephan.
\newblock On a least squares adjustment of a sampled frequency table when the expected marginal totals are known.
\newblock \emph{The Annals of Mathematical Statistics}, 11\penalty0 (4):\penalty0 427--444, 1940.

\bibitem[Di~Marino and Louet(2018)]{marino18entropic}
S.~Di~Marino and J.~Louet.
\newblock The entropic regularization of the {Monge} problem on the real line.
\newblock \emph{SIAM Journal on Mathematical Analysis}, 50\penalty0 (4):\penalty0 3451--3477, 2018.

\bibitem[Gangbo and McCann(1996)]{gangbo1996geometry}
W.~Gangbo and R.~J. McCann.
\newblock The geometry of optimal transportation.
\newblock \emph{Acta Mathematica}, 177\penalty0 (2):\penalty0 113--161, 1996.

\bibitem[Garg et~al.(2024)Garg, Zhang, and Zhou]{garg24soft}
J.~Garg, X.~Zhang, and Q.~Zhou.
\newblock Soft-constrained {Schr{\"o}dinger} bridge: a stochastic control approach.
\newblock In \emph{International Conference on Artificial Intelligence and Statistics}, pages 4429--4437. PMLR, 2024.

\bibitem[González-Sanz et~al.(2025)González-Sanz, Nutz, and Valdevenito]{gonzalezsanz2025linear}
A.~González-Sanz, M.~Nutz, and A.~R. Valdevenito.
\newblock Linear convergence of gradient descent for quadratically regularized optimal transport, 2025.
\newblock arXiv:2509.08547.

\bibitem[Hamdouche et~al.(2023)Hamdouche, Henry-Labordere, and Pham]{hamdouche2023generative}
M.~Hamdouche, P.~Henry-Labordere, and H.~Pham.
\newblock Generative modeling for time series via {S}chr{\"o}dinger bridge, 2023.
\newblock arXiv:2304.05093.

\bibitem[Han et~al.(2024)Han, Razaviyayn, and Xu]{han2024neural}
Y.~Han, M.~Razaviyayn, and R.~Xu.
\newblock Neural network-based score estimation in diffusion models: Optimization and generalization, 2024.
\newblock arXiv:2401.15604.

\bibitem[Ho et~al.(2020)Ho, Jain, and Abbeel]{ho2020denoising}
J.~Ho, A.~Jain, and P.~Abbeel.
\newblock Denoising diffusion probabilistic models.
\newblock \emph{Advances in Neural Information Processing Systems}, 33:\penalty0 6840--6851, 2020.

\bibitem[Jiang(2024)]{jiang24Duality}
Y.~Jiang.
\newblock Duality of causal distributionally robust optimization, Jan. 2024.
\newblock arXiv:2401.16556.

\bibitem[Jiang and Obłój(2025)]{jiang25sensitivity}
Y.~Jiang and J.~Obłój.
\newblock Sensitivity of causal distributionally robust optimization, May 2025.
\newblock arXiv:2408.17109.

\bibitem[Lai et~al.(2025)Lai, Song, Kim, Mitsufuji, and Ermon]{lai2025principles}
C.-H. Lai, Y.~Song, D.~Kim, Y.~Mitsufuji, and S.~Ermon.
\newblock The principles of diffusion models, 2025.

\bibitem[Ley(2025)]{ley25entropic}
A.~Ley.
\newblock Entropic selection for optimal transport on the line with distance cost, Dec. 2025.
\newblock arXiv:2512.05282.

\bibitem[Léonard(2013)]{leonard2013survey}
C.~Léonard.
\newblock A survey of the {{Schrödinger}} problem and some of its connections with optimal transport.
\newblock \emph{Discrete and Continuous Dynamical Systems}, 34\penalty0 (4):\penalty0 1533--1574, 2013.

\bibitem[Ma et~al.(2025)Ma, Tan, and Xu]{ma2025schr}
J.~Ma, Y.~Tan, and R.~Xu.
\newblock {S}chr{\"o}dinger bridge for generative {AI}: Soft-constrained formulation and convergence analysis.
\newblock \emph{arXiv preprint arXiv:2510.11829}, 2025.

\bibitem[Nutz(2021)]{nutz2021introduction}
M.~Nutz.
\newblock Introduction to entropic optimal transport.
\newblock \emph{Lecture notes, Columbia University}, 2021.

\bibitem[Nutz(2025)]{nutz25quadratically}
M.~Nutz.
\newblock Quadratically regularized optimal transport: Existence and multiplicity of potentials.
\newblock \emph{SIAM Journal on Mathematical Analysis}, 57\penalty0 (3):\penalty0 2622--2649, 2025.

\bibitem[Nutz and Wiesel(2022)]{nutz2022entropic}
M.~Nutz and J.~Wiesel.
\newblock Entropic optimal transport: Convergence of potentials.
\newblock \emph{Probability Theory and Related Fields}, 184\penalty0 (1):\penalty0 401--424, 2022.

\bibitem[Peluchetti(2023)]{peluchetti2023diffusion}
S.~Peluchetti.
\newblock Diffusion bridge mixture transports, {S}chr{\"o}dinger bridge problems and generative modeling.
\newblock \emph{Journal of Machine Learning Research}, 24\penalty0 (374):\penalty0 1--51, 2023.

\bibitem[Peyr{\'e} et~al.(2019)Peyr{\'e}, Cuturi, et~al.]{peyre2019computational}
G.~Peyr{\'e}, M.~Cuturi, et~al.
\newblock Computational optimal transport: With applications to data science.
\newblock \emph{Foundations and Trends in Machine Learning}, 11\penalty0 (5-6):\penalty0 355--607, 2019.

\bibitem[Rigollet and Weed(2018)]{rigollet2018entropic}
P.~Rigollet and J.~Weed.
\newblock Entropic optimal transport is maximum-likelihood deconvolution.
\newblock \emph{Comptes Rendus. Math{\'e}matique}, 356\penalty0 (11-12):\penalty0 1228--1235, 2018.

\bibitem[Sauldubois and Touzi(2024)]{sauldubois24first}
N.~Sauldubois and N.~Touzi.
\newblock First order martingale model risk and semi-static hedging, Oct. 2024.
\newblock arXiv:2410.06906.

\bibitem[Schr{\"o}dinger(1931)]{schrodinger1931uber}
E.~Schr{\"o}dinger.
\newblock {\"U}ber die umkehrung der naturgesetze.
\newblock \emph{Sitzungsberichte der Preu{{\ss}}ischen Akademie der Wissenschaften, Physikalisch-Mathematische Klasse}, pages 144--153, 1931.

\bibitem[Schr{\"o}dinger(1932)]{schrodinger1932sur}
E.~Schr{\"o}dinger.
\newblock Sur la th{\'e}orie relativiste de l'electron et l'interpretation de la m{\'e}canique quantique.
\newblock \emph{Annales de l'Institut Henri Poincar{\'e}}, 2:\penalty0 269--310, 1932.

\bibitem[Shi et~al.(2023)Shi, De~Bortoli, Campbell, and Doucet]{shi2024diffusion}
Y.~Shi, V.~De~Bortoli, A.~Campbell, and A.~Doucet.
\newblock Diffusion {S}chr{\"o}dinger bridge matching.
\newblock \emph{Advances in Neural Information Processing Systems}, 36, 2023.

\bibitem[Sinha et~al.(2018)Sinha, Namkoong, and Duchi]{sinha18certifying}
A.~Sinha, H.~Namkoong, and J.~Duchi.
\newblock Certifying some distributional robustness with principled adversarial training.
\newblock In \emph{International {{Conference}} on {{Learning Representations}}}, 2018.

\bibitem[Song et~al.(2021)Song, Sohl-Dickstein, Kingma, Kumar, Ermon, and Poole]{song2020score}
Y.~Song, J.~Sohl-Dickstein, D.~P. Kingma, A.~Kumar, S.~Ermon, and B.~Poole.
\newblock Score-based generative modeling through stochastic differential equations.
\newblock In \emph{International Conference on Learning Representations}, 2021.

\bibitem[Séjourné et~al.(2023)Séjourné, Peyré, and Vialard]{sejourne23unblanced}
T.~Séjourné, G.~Peyré, and F.-X. Vialard.
\newblock Unbalanced optimal transport, from theory to numerics.
\newblock In E.~Trélat and E.~Zuazua, editors, \emph{Numerical Control: Part B}, volume~24 of \emph{Handbook of Numerical Analysis}, pages 407--471. Elsevier, 2023.

\bibitem[Vargas et~al.(2021)Vargas, Thodoroff, Lamacraft, and Lawrence]{vargas2021solving}
F.~Vargas, P.~Thodoroff, A.~Lamacraft, and N.~Lawrence.
\newblock Solving {S}chr{\"o}dinger bridges via maximum likelihood.
\newblock \emph{Entropy}, 23\penalty0 (9):\penalty0 1134, 2021.

\bibitem[Wang et~al.(2021)Wang, Jiao, Xu, Wang, and Yang]{wang2021deep}
G.~Wang, Y.~Jiao, Q.~Xu, Y.~Wang, and C.~Yang.
\newblock Deep generative learning via {S}chr{\"o}dinger bridge.
\newblock In \emph{International Conference on Machine Learning}, pages 10794--10804. PMLR, 2021.

\bibitem[Zhang et~al.(2024)Zhang, Yang, and Gao]{zhang24short}
L.~Zhang, J.~Yang, and R.~Gao.
\newblock A short and general duality proof for wasserstein distributionally robust optimization.
\newblock \emph{Operations Research}, pages 1723--2295, July 2024.

\end{thebibliography}

\end{document}